\documentclass[12pt,reqno]{amsart}

\usepackage[margin=1in]{geometry}
\usepackage{psfrag}
\usepackage{graphicx}
\usepackage{epsfig} 
\usepackage{float}

\usepackage{amsmath,bm}
\usepackage{amsthm}
\usepackage{amsfonts}
\usepackage{amssymb}
\usepackage{verbatim}
\usepackage{latexsym}
\usepackage[mathscr]{eucal}
\usepackage{color}
\usepackage{enumerate}
\usepackage[active]{srcltx}

\newcounter{mnote}
 \setcounter{mnote}{0}

\usepackage[colorlinks=true]{hyperref}
\hypersetup{urlcolor=blue, citecolor=red}

\numberwithin{equation}{section}


\title[Data assimilation using local observables]{Data assimilation for the Navier-Stokes equations using local observables}
\author{Animikh Biswas}
\author{Zachary Bradshaw}
\author{Michael S. Jolly}
\thanks{The research  of Z.~Bradshaw is supported in part by Simons Foundation grant 635438 and that of M.~Jolly in part by NSF grant DMS-1818754.}


\vfuzz2pt \hfuzz2pt


 \textwidth 6.0in



\theoremstyle{plain}
\newtheorem{thm}{Theorem}[section]
\newtheorem{lem}{Lemma}[section]

\newtheorem{cor}{Corollary}[section]

\theoremstyle{definition}
\newtheorem{rmk}{Remark}[section]


\newcommand{\comments}[1]{}

\newcommand{\R}{\mathbb R}

\newcommand{\D}{\displaystyle }
\newcommand{\Z}{\mathbb Z}

\newcommand{\dt}{{\D\frac{d}{dt}}}

\newcommand{\N}{\mathbb N}
\newcommand{\be}{\begin{equation}}
\newcommand{\ee}{\end{equation}}
\newcommand{\bes}{\begin{equation*}}
\newcommand{\ees}{\end{equation*}}

\newcommand{\supp}{{\text{\rm supp }\,}}

\newcommand{\Om}{{\Omega_{0}}}
\newcommand{\om}{{\Omega}}

\newcommand{\ga}{\gamma}

\newcommand{\bu}{{\mathbf u}}
\newcommand{\bv}{{{\mathbf v}_N}}
\newcommand{\bvv}{{\mathbf v}}
\newcommand{\bw}{{\mathbf w}}
\newcommand{\bk}{{\mathbf k}}
\newcommand{\bx}{{\mathbf x}}
\newcommand{\bbf}{{\mathbf f}}
\newcommand{\by}{{\mathbf y}}

\newcommand{\I}{\infty}
\newcommand{\td}{\tilde}

\newcommand{\f}{\mathbf{f}}

\newcommand{\interp}{\mathcal{K}}

\newcommand{\mP}{\mathcal{P}}

\newcommand{\bea}{\begin{eqnarray}}
\newcommand{\eea}{\end{eqnarray}}
\newcommand{\beas}{\begin{eqnarray*}}
\newcommand{\eeas}{\end{eqnarray*}}

\newcommand{\EQ}[1]{\begin{equation}\begin{split} #1 \end{split}\end{equation}}





\newcommand{\charfn}[1]{{\raisebox{1.2pt}{\mbox{$\chi
_{\kern-1pt\lower3pt\hbox{{$\scriptstyle{#1}$}}}$}}}}





\date{\today}
\subjclass[2010]{Primary 35Q30,76B75,34D06; Secondary 35Q35, 35Q93}
\keywords{Data assimilation, Navier-Stokes, local observables, mobile data}


\begin{document}
\begin{abstract}
We develop, analyze, and test an approximate, {\it global} data assimilation/synchronization algorithm based on purely {\it local} observations for the two-dimensional Navier-Stokes equations on the torus. We prove that, for any error threshold, if the reference flow is analytic with sufficiently large analyticity radius, then it can be recovered within that threshold.  Numerical computations are included to demonstrate the effectiveness of this approach, as well as variants with data on moving subdomains. In particular, we demonstrate numerically that machine precision synchronization is achieved for {\it mobile data} collected from a small fraction of the domain.
\end{abstract}

\maketitle

\section{Introduction} \label{sec.intro}

For a given dynamical system, which is believed to accurately describe some aspect(s) of an underlying physical reality, 
the problem of forecasting is often hindered by  inadequate knowledge of the initial state and/or model parameters describing the system.
However, in many cases, such as in weather prediction, this is compensated by the fact that one has access to data from (frequently noisy) measurements of the system, collected either continuously  in time or at discrete time points, albeit on a {\it much coarser spatial grid than the desired resolution of the forecast}.  \comments{This, for instance, is the case in atmospheric sciences where, since the launch of the first weather satellites in the 1960s, weather data has been collected nearly continuously in time, which furnishes us with the knowledge of the state of the system, e.g., the velocity vector field or temperature, on a {\it coarse spatial grid of points}.} The objective of data assimilation and signal synchronization in geophysics is to use low spatial resolution observational measurements
to fine tune  our knowledge of the state and/or model to improve the accuracy of the forecasts \cite{Daley1991, K}.
While atmospheric science, geoscience and meteorology have provided the initial impetus for the subject, it has now found widespread application, including, but not limited to, environmental sciences, systems biology and medicine \cite{Kost2, Kost-1}, imaging science, traffic control and urban planning, economics and finance and oil exploration \cite{ABN}. 

An extensive literature exists on data assimilation using the Bayesian and variational framework (e.g., 3D Var and Kalman filter based approaches)   \cite{ABN,BLSZ2013, BrML, BrGM, BrM, HM,K,KLS,LSZ,RC}. Despite this large body of work, 
as noted in \cite{HM1, tmk2016-1, tmk2016-2}, the
problems of stability, accuracy and {\it catastrophic filter divergence}, particularly for infinite-dimensional chaotic dynamical systems governed by PDEs, continue to pose serious challenges to rigorous analysis of these Bayesian/Kalman filter based schemes, and are far from being resolved.

Recently, Azouani, Olson and Titi developed a new approach to data assimilation using a feedback control paradigm via nudging/Newtonian relaxation, which is supported by rigorous analytical proof  of convergence of the scheme \cite{AOT, AT}. This initiated a lively field of research---see \cite{ANT,BOT,BCM,BM,CHL,FGHMMW,FMT,MTT} and the references therein.  More recently, it has been successfully implemented for the first time for efficient dynamical downscaling of a global circulation model \cite{desam2019}, where the authors assert that {\it ``overall results clearly suggest that CDA
provides an efficient new approach for dynamical downscaling by maintaining better balance between the global model and the downscaled fields."}

Rigorous results following in the vein of \cite{AOT}, or for instance \cite{BLSZ2013} for variational data assimilation,
are based on the earlier work on determining parameters \cite{FP,FT}
and {\it require global knowledge}, either in the form  of the low modes (which necessitates global measurements to determine) or information from a global array of uniformly distributed observables, usually nodal values or volume elements \cite{AOT}.  When measuring data from real world systems, certain considerations impact the placement of instruments. It is easier, for example, to measure fluid velocity or temperature at shallow depths in the ocean than at extreme depths. Similarly, a wide array of uniformly placed sensors are plainly infeasible when modeling the solar wind or the ionosphere, thus necessitating data assimilation techniques based on {\it local} measurements \cite{AK, Ion1, Ion2}. 
These examples illustrate the value of a \emph{global} data assimilation algorithm that is based on \emph{local} measurements. The existing abstract approaches to data assimilation require a global array of data points. In this paper, we introduce a data assimilation algorithm based on local observables for the 2D periodic Navier-Stokes equations that recovers a sufficiently regular reference solution within a specified error.  The key ingredient is a spectral inequality due to Egidi and Veseli\'c \cite{EV,EV2} which bounds  the $L^2$ norm over the full domain in terms of that over a subdomain, enabling us to use the local data obtained from the subdomain for global assimilation of the system. Moreover, we illustrate the efficacy of our algorithm by extensive computational studies. 
Furthermore, we demonstrate numerically that observation on a small fraction of the domain suffices for global assimilation provided the data collection domain moves with time to cover the entire domain, which may in practice indeed be the case for data collected by satellites, air crafts or ships, which are mobile. The efficacy of {\it mobile data}, i.e., moving point measurements for data assimilation was also computationally observed recently by Larios and Victor  \cite{LV} for a one-dimsnsional model. Our numerical work shows that the same holds true for data assimilation based on local observations for the Navier-Stokes equations.

The main results are stated in Section \ref{sec.main}.  After some preliminary material in Section \ref{sec.prelim}, we establish existence of solutions to the nudged equation in Section \ref{sec.existence}, followed by the proof of synchronization in Section \ref{sec.DA}. We provide in Section \ref{sec.comp} computational evidence to demonstrate the effectiveness of local sampling for the synchronization of global spatial flow features.  We then test in Section \ref{sec.mobile} variations of the algorithm in which the subdomain moves with time and find the convergence to the reference solution is greatly enhanced.     \comments{We note that Larios and Victor have done a computational study using mobile data for the 1D Allen-Cahn reaction-diffusion equation \cite{LV}.}  A brief summary is given in Section \ref{sec.summary}. It should be noted that while our local data assimilation results are established in the context of the Navier-Stokes equations,  it is not difficult to extend these results to other dissipative fluid models such as the magnetohydrodynamic equations or the Boussinesq equations.  

\section{Statement of main results}\label{sec.main}
 
Let $\Om$ be a $C^2$ domain in $\R^2$ or the periodic box $[-L/2,L/2]^2$. The Navier-Stokes equations in functional form   are 
\be \label{eq.nse}
\dt \bu + \nu A\bu +B(\bu,\bu)=\f \mbox{ in } \Om\times (0,\I);\quad \nabla \cdot \bu=0\mbox{ in } \Om\times (0,\I).
\ee
In \eqref{eq.nse}, $B(\bu,{\mathbf v})= \mP_\sigma( \bu\cdot\nabla {\mathbf v})$ where $\mP_\sigma$ is the Leray projection operator, $A$ is the Stokes operator, $\bbf$ is a given divergence free forcing and the velocity field $\bu$ is unknown.
We further impose zero Dirichlet boundary conditions if working on a $C^2$ domain and periodic boundary conditions if working on the torus. Additionally  assume the flow evolves from an initial datum $\bu_0 $ in an appropriate function space.  We use standard notations for the function spaces commonly used to analyze \eqref{eq.nse} \cite{cf,Temam}.  

Throughout this paper we consider a domain $\Om$ and a sub-domain $\om$. We partition $\om$ in the following way:~Form a lattice of points in $\om$ so that the distance between neighboring points is $h$. This leads to a finite collection of $M$ closed squares $\{S_i\}_{i=1}^M$  so that $\om\subset \cup_{1\leq i\leq M}S_i$ and $S_i\cap \om \neq \emptyset$ for all $1\leq i\leq M$. Let $\bx_i$ denote the center of each square.

For our first result, let $\Om=[-L/2,L/2]^2$ and consider \eqref{eq.nse} with periodic boundary conditions and analytic forcing $\bbf$.
Our goal is to obtain a global data assimilation result using local observables. In particular, observations are limited to an open set $\om$  compactly contained in $\Om$.   

Two types of interpolant operators appear in the literature. Type 1 operators satisfy an approximation inequality where the upper bound involves the $H^1$-norm. A relevant example  of a Type 1 operator is the following based on averages over volume elements: 
\EQ{
(I_{h}f)(x) = \sum_{i=1}^M \chi_{S_i}(\bx) (f)_{S_i},
}
where  $( f)_{S_i}$ denotes the integral average of $f$ over $S_i$, i.e., 
$(f)_{S_i}=\frac{1}{|S_i|}\int_{S_i} f$. 
The approximation property given in \cite{AOT} for this operator is the following:\EQ{\label{ineq.type1}
\|  f-I_hf  \|^2_{L^2(\Om)} \leq c_0 h^2 \|f\|^2_{H^1(\Om)}.
}
In our analysis we use a local version of this operator with an additional feature, {\it spectral filtering} to the first $N$ modes.  Our spectrally filtered operator is given by 
\EQ{
(I_{h,N,\om} f)(\bx)=P_N  \sum_{i=1}^M \chi_{S_i \cap \om}(\bx) (   f \chi_\om )_{S_i}\;,
}
where $P_N$ is the projector onto the first $N$ eigenvectors of $A$.
Denoting
\EQ{ \label{localinterp}
(I_{h, \om} f)(\bx)=  \sum_{i=1}^M \chi_{S_i \cap \om}(\bx) (   f \chi_\om )_{S_i},
}  
we have $(I_{h,N,\om} f)(\bx)=P_N(I_{h, \om} f)(\bx)$.
Note that $\supp I_{h, \om} f = \om$.
An identical (modulo constants) approximation property to \eqref{ineq.type1} will be proved in Section \ref{sec.prelim} for this local operator.

Type 2 operators  satisfy an approximation inequality where the upper bound involves the $H^2$-norm. A relevant example  of a Type 2 operator is the following based on nodal values: 
\EQ{
(\mathcal I_{h}f)(\bx) = \sum_{i=1}^M \chi_{S_i}(\bx) f(\bx_i).
}
The corresponding approximation property from \cite{AOT} is 
\EQ{\label{ineq.type2}
\|f - \mathcal I_hf\|^2_{L^2(\Om)}\leq c_0^2h^4 \|f\|^2_{H^2(\Om)}.
}
Again, we will investigate a local version of this operator. In particular, we define
\EQ{
(\mathcal I_{h,N,\om} f)(\bx)= P_N  \sum_{i=1}^N \chi_{S_i\cap \om}(\bx)  (\chi_\om  f )(\bx_i) =P_N   \mathcal I_h(\chi_\om  f).
}
Also let
\EQ{ \label{localnodal}
(\mathcal I_{h,\om} f)(\bx)=   \sum_{i=1}^N \chi_{S_i\cap \om}(\bx)  (\chi_\om f )(\bx_i).
}
An analogous inequality to \eqref{ineq.type2} will be proven in Section \ref{sec.prelim}.
Spectrally filtered operators using global observables similar to these have been used for data assimilation in \cite{COT}. 

The interpolant operators are used to feed information about a solution $\bu$ to \eqref{eq.nse} into the data assimilation equation 
\EQ{\label{eq.da1}
\frac d {dt} \bv +\nu A \bv +  P_NB(\bv,\bv) = P_N \bbf -\mu ( J_{h,N,\om}\bv -  J_{h,N,\om}\bu  ),
} 
where $J_{h,N,\om}\in \{ I_{h,N,\om} ,\mathcal I_{h,N,\om}\}$.
Note that the samples used to drive $\bv$ are confined to the sub-domain $\om$. Furthermore $\bv$ lives in $\operatorname{span}(\phi_1,\ldots,\phi_N)$. Our main results says that, within any given tolerance $\epsilon$, $\bv$ captures the long time properties of $\bu$ provided $N$  and $\mu$ are sufficiently large and $h$ is sufficiently small, with quantities determined by $\om$, $\nu$, the Grashof number $G$ (defined in \eqref{eq.Grashof}) and $\epsilon$. It requires the solution $\bu$, and consequently the forcing ${\mathbf f}$, to be  uniformly in the $L^2$ based Gevrey class, i.e.~$\bu \in L^\I ((0,\I);D(A^{1/2}e^{\sigma A^{1/2}}))$, with $\sigma$ sufficiently large as determined by $\om$, $\nu$, $G$ and $\epsilon$. The precise definition of $D(A^{1/2}e^{\sigma A^{1/2}})$ is given in Section \ref{sec.prelim}.

\begin{thm}[Approximate convergence for local observations] Let $\om$ be an open set in $[-L/2,L/2]^2$.
\label{thrm.main1} Let $\bu$ be the   solution to \eqref{eq.nse} for some $\bu_0\in V$ and $\bbf\in L^\I ((0,\I); H))$. Assume additionally that
\be \label{condn:gev}
M:=\limsup_{t\to \I} \| \bu(t)\|_{D(A^{1/2}e^{\sigma A^{1/2}}  )}=\limsup_{t\to \I}
|A^{1/2}e^{\sigma A^{1/2}}\bu|<\I.
\ee
Let $\epsilon>0$ be given. There exists a spectral index $N_*=N_*(\om,G,\nu,\epsilon,M)\in\N$, so that, for any $N\geq N_*$, there exists a large value $\mu(\nu,\om,G,N)$, a small value $h_*=h_*(\nu,\om,G,N)$ and a large length scale $\sigma_*=\sigma_*(\om)$ so that if $h\in (0,h_*)$ and $\sigma >\sigma_*$, then
\[
\| \bu(t) -\bv(t) \|_{L^2(\Om)}<\epsilon,
\]
for sufficiently large $t$, where $\bv$ is the global smooth solution to \eqref{eq.da1} taken with either $J_{h,N,\om}=  I_{h,N,\om}$ or $J_{h,N,\om}=  \mathcal I_{h,N,\om}$ and zero initial data.
\end{thm}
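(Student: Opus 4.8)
The plan is to compare $\bv$ not with $\bu$ directly, but with its Galerkin projection $P_N\bu$, and to control the neglected high modes separately using the Gevrey hypothesis \eqref{condn:gev}. Set $Q_N:=I-P_N$ and $\bw:=\bv-P_N\bu\in\operatorname{span}(\phi_1,\dots,\phi_N)$. Applying $P_N$ to \eqref{eq.nse} and subtracting from \eqref{eq.da1}, using $J_{h,N,\om}=P_NJ_{h,\om}$ and $\bv-\bu=\bw-Q_N\bu$, gives
\[
\dt\bw+\nu A\bw+P_N\big(B(\bv,\bv)-B(\bu,\bu)\big)=-\mu J_{h,N,\om}\bw+\mu J_{h,N,\om}Q_N\bu .
\]
I then take the $L^2$ inner product with $\bw$ and close the resulting identity by absorption and Gr\"onwall, writing $|\cdot|$ for the $L^2(\Om)$ norm and $\|\cdot\|=|A^{1/2}\cdot|$ for the enstrophy norm. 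Since \eqref{condn:gev} bounds the full Gevrey norm, the tails $|Q_N\bu|$ and $\|Q_N\bu\|$ decay like $\lambda_{N+1}^{-1/2}\Exp^{-\sigma\lambda_{N+1}^{1/2}}M$ up to powers of $\lambda_{N+1}$ (here $\lambda_{N+1}$ is the first eigenvalue of $A$ above the cutoff); this is the quantity I will drive below $\epsilon$ by enlarging $N$ and $\sigma$.

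The decisive term is $-\mu(J_{h,N,\om}\bw,\bw)$. Because $P_N\bw=\bw$, I move $P_N$ onto $\bw$ to get $(J_{h,N,\om}\bw,\bw)=(J_{h,\om}\bw,\bw)$ and split $J_{h,\om}\bw=\chi_\om\bw+(J_{h,\om}\bw-\chi_\om\bw)$, so that
\[
-\mu(J_{h,N,\om}\bw,\bw)=-\mu\|\bw\|^2_{L^2(\om)}-\mu\big(J_{h,\om}\bw-\chi_\om\bw,\bw\big).
\]
This is where the local-to-global mechanism enters: since $\bw$ is band-limited, the Egidi--Veseli\'c spectral inequality yields $\|\bw\|^2_{L^2(\om)}\geq c_{\om,N}^{-1}|\bw|^2$, turning the \emph{observed local mass} into genuine coercivity $-\mu c_{\om,N}^{-1}|\bw|^2$ on all of $\Om$. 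The remaining commutator is handled by the local analogue of \eqref{ineq.type1}--\eqref{ineq.type2} proved in Section \ref{sec.prelim}, giving $\mu c_0 h\lambda_N^{1/2}|\bw|^2$ (respectively $\mu c_0 h^2\lambda_N|\bw|^2$ for the nodal operator), which is negligible once $h$ is small relative to $N$.

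For the nonlinearity I use the antisymmetry $(B(\bs a,\bw),\bw)=0$; expanding $B(\bv,\bv)-B(\bu,\bu)$ with $\bv-\bu=\bw-Q_N\bu$, the only top-order surviving term is $(B(\bw,\bu),\bw)$, bounded in 2D by $\tfrac\nu8\|\bw\|^2+c\nu^{-1}\|\bu\|^2|\bw|^2$, where the asymptotic enstrophy $\|\bu\|$ is controlled by the Grashof number $G$. Every other piece carries a factor $Q_N\bu$ and is retained as forcing, small by the Gevrey tails above. The driving term $\mu(J_{h,N,\om}Q_N\bu,\bw)$ is split by Young's inequality, half absorbed into $\mu\|\bw\|^2_{L^2(\om)}$ and half kept as forcing $\sim\mu|Q_N\bu|^2$. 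Collecting everything,
\[
\dt|\bw|^2+\Big(\tfrac{2\mu}{c_{\om,N}}-c\nu\ka_0^2G^2-c_0\mu h\lambda_N^{1/2}\Big)|\bw|^2\leq F,
\]
with $F$ a sum of high-mode tails amplified by at most $\mu$.

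The constants must now be fixed in the right order, and this balancing is the main obstacle. The spectral-inequality constant $c_{\om,N}$ grows (exponentially) in $N$, so keeping the coercive coefficient positive and bounded below forces $\mu\gtrsim c_{\om,N}\nu\ka_0^2G^2$, which in turn amplifies the high-mode forcing in $F$. I therefore first choose $N\geq N_*$ large enough that $|Q_N\bu|<\epsilon/2$ and that the unamplified tails in $F$ are controlled; then $\mu=\mu(N)$ to secure coercivity with some $\alpha>0$; then $h_*=h_*(N)$ so the commutator is absorbed; and finally $\sigma>\sigma_*$ large enough that the $\mu$-amplified tail $\mu|Q_N\bu|^2$, hence $\limsup_t|\bw|^2\le F/\alpha$, falls below $(\epsilon/2)^2$. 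Gr\"onwall then gives $\limsup_t|\bw(t)|<\epsilon/2$, and the triangle inequality $\|\bu(t)-\bv(t)\|_{L^2(\Om)}\le|\bw(t)|+|Q_N\bu(t)|$ closes the argument; the existence and regularity of the global solution $\bv$ needed to justify these estimates is supplied by Section \ref{sec.existence}.
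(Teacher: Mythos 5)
Your proposal follows essentially the same route as the paper: the same comparison field $\bw=\bv-P_N\bu$, the same splitting of the nudging term into an observed local mass plus an interpolation commutator controlled by \eqref{localvolineq}--\eqref{localnodalineq}, the Egidi--Veseli\'c spectral inequality to pass between $\|\bw\|_{L^2(\om)}$ and $\|\bw\|_{L^2(\Om)}$ for band-limited $\bw$, and the Gevrey hypothesis \eqref{condn:gev} to control the tails $Q_N\bu$ that enter both the nonlinearity and the $\mu$-amplified driving term. (Whether one reads the spectral inequality as giving global coercivity $-\mu c_{\om,N}^{-1}\|\bw\|_{L^2(\Om)}^2$ to the observed local mass, as you do, or as localizing the bad term $C\nu^{-1}\|A^{1/2}\bu\|^2\|\bw\|_{L^2(\Om)}^2$ so that it can be absorbed by $-\mu\int_\om|\bw|^2$, as the paper does, is the same estimate used in the same place.)

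One point of bookkeeping does not match the theorem as stated. You fix $\sigma$ last, ``large enough that the $\mu$-amplified tail $\mu|Q_N\bu|^2$ falls below $(\epsilon/2)^2$,'' which makes $\sigma_*$ depend on $N$ and hence on $\epsilon$ and $M$, whereas the theorem asserts $\sigma_*=\sigma_*(\om)$ only. The paper's resolution exploits the specific form $\mu\sim C_\om\nu\lambda_1 G^2 e^{C_\om\sqrt N}$: once $\sigma\gtrsim C_\om$ --- a condition on $\om$ alone --- the sum $\sum_{|\bk|\gtrsim\sqrt N}e^{C_\om\sqrt N-2\pi\sigma|\bk|/L}$ is bounded by $1$ uniformly in $N$, and the remaining prefactor, of order $C_\om\nu\lambda_1 G^2 M^2/N$, is driven below $\bar\epsilon$ by enlarging $N$, not $\sigma$. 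Your estimates already contain everything needed for this; only the order of the quantifiers, and the attribution of the final smallness to $N$ rather than to $\sigma$, needs to be rearranged.
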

 
{\bf Comments on Theorem \ref{thrm.main1}:}
\begin{enumerate} 
\item The assumptions are complicated so we elaborate on how they fit together:
\begin{itemize}
\item The needed analyticity radius $\sigma$ is determined solely by the sub-domain. Therefore Theorem \ref{thrm.main1} can be viewed as saying: For analytic flows with analyticity radius large as determined by the sub-domain $\om$, we can approximately recover the solution. In this sense Theorem \ref{thrm.main1} is a conditional result.
\item Next, $N_*$ is chosen large in a manner depending on  $\epsilon$, $M$ and a priori bounded quantities associated with $\bu$. For $N\geq N_*$, we can execute the data assimilation argument for sufficiently large $\mu$ and sufficiently small $h$. For technical convenience, we fix $\mu$ in Theorem \ref{thrm.main1} based on $N$.  
\end{itemize} 
\item The existence of $\bv$ is implicit in Theorem \ref{thrm.main1}. The details are worked out in Section \ref{sec.existence}.
\item  If the solution $\bu$ is replaced by a Galerkin approximation in $\operatorname{span}(\phi_1,\ldots,\phi_N)$, then the conclusion of Theorem \ref{thrm.main1} holds whenever $\bbf \in L^\I(0,\I;H)$. Alternatively, if a condition is imposed on the forcing that results in the flow remaining spectrally local, e.g.~if the forcing and data are supported on finitely many Fourier modes, then the flow can be recovered exactly as $t \to \infty$.   This implies numerically simulated flows---which are spectrally localized---can be recovered up to machine precision and round-off error using observations confined to a sub-domain provided sufficiently many observations are taken. The number of observations depends exponentially on $N$, although as demonstrated numerically in Section \ref{sec.comp}, a far smaller value of $N$ is necessary. While this is interesting from a numeric perspective, in practical applications the observed data comes from nature and is not spectrally localized. When the flow is not spectrally local we lose exactness and also require analyticity.
\item In  Azouani-Olson-Titi data assimilation, the large parameter $\mu$ is attached to a large positive global quantity and is used to hide large contributions in the energy inequality for the difference of the solution to \eqref{eq.nse} and the solution to the data assimilation equation. In our localized setup, $\mu$ is attached to a large positive \emph{local} quantity while the contributions in the energy inequality remain global. To bridge the gap between global and local in this setting we use spectral inequalities developed to study   control problems. These are compiled in Section \ref{sec.prelim}
\end{enumerate}

An exact convergence result follows as a corollary of Theorem \ref{thrm.main1} provided \eqref{condn:gev} holds and one has full knowledge of $\bu|_{\om}$.   This means we solve
\EQ{\label{eq.dacomplete}
\frac d {dt} \bv +\nu A \bv +  P_NB(\bv,\bv) = P_N \bbf -\mu P_N \chi_\om(\bv - \bu  )\;.
} 
In particular, we can construct a vector field $\bvv$ that converges to $\bu$ as $t\to \I$ in an appropriate average sense by increasing the sample size in Theorem \ref{thrm.main1}. 
\begin{cor}\label{thrm.exact.convergence.1}
Under the assumptions of Theorem \ref{thrm.main1}, i.e. \eqref{condn:gev}, there exists a vector field $\bvv\in L^\I(0,\I;H) \cap L^2(0,T;V)$ for all $T>0$ so that  $\bvv$ is a limit (in an appropriate sense) of a sequence of vector fields satisfying \eqref{eq.dacomplete} and for every measurable set $U$ we have 
\begin{itemize}
\item  for every $\Delta>0$,
\[
\lim_{t\to \I} \int_{t}^{t+\Delta} \int_U (\bu-\bvv)\,d\bx\,dt = 0,
\]
at an exponential rate;
\item there exists a set of times $S$ with $|S^c|=0$ so that
\[
\lim_{t\to \I}\chi_S(t) \int_{U} (\bu-\bvv)(\bx,t) \,d\bx = 0,
\] 
at an exponential rate. 
\end{itemize}
\end{cor}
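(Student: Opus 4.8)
The plan is to realize $\bvv$ as a weak limit of exact-data nudged solutions indexed by the spectral cutoff $N$, and to transfer a decay estimate with an $N$-\emph{uniform} rate from the pre-limit equations to the limit. For each $N\ge N_*$ I would solve \eqref{eq.dacomplete} with $\mu=\mu_N$ as furnished by Theorem \ref{thrm.main1}, obtaining a global solution $\bvv_N\in P_NH$ (existence as in Section \ref{sec.existence}), and then rerun the energy argument behind Theorem \ref{thrm.main1} with the exact localization $P_N\chi_\om$ in place of the interpolant $J_{h,N,\om}$, which removes the interpolation error entirely. Setting $\bw_N:=P_N\bu-\bvv_N$ (so $P_N\bw_N=\bw_N$), the feedback produces the diagonal term $-\mu_N\|\bw_N\|_{L^2(\om)}^2$ plus a cross term in the tail $(I-P_N)\bu$; the Egidi--Veseli\'c inequality of Section \ref{sec.prelim} converts the diagonal term into $-\mu_N c_N\|\bw_N\|_{L^2(\Om)}^2$. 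Taking $\mu_N$ large enough that $\mu_N c_N$ beats the dissipative and nonlinear contributions, I obtain, with a rate $\alpha_0>0$ \emph{independent of} $N$,
\[
\|\bu(t)-\bvv_N(t)\|_{L^2(\Om)}^2\le C_0\,e^{-\alpha_0(t-t_0)}+\delta_N^2,\qquad t\ge t_0,
\]
where $C_0,\alpha_0$ are $N$-independent and $\delta_N^2$ gathers the tail $\sup_t\|(I-P_N)\bu\|_{L^2(\Om)}^2$ and the (now uniformly damped) feedback residual; the Gevrey bound \eqref{condn:gev} with $\sigma>\sigma_*$ gives $\|(I-P_N)\bu\|\lesssim e^{-\sigma\lambda_{N+1}^{1/2}}M$ and hence $\delta_N\to0$. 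This uniform-rate bound is the crux.

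Next I would pass to the limit. The difference variable already bounds $\{\bvv_N\}$ in $L^\I(0,\I;H)\cap L^2(0,T;V)$ uniformly in $N$, so Banach--Alaoglu together with an Aubin--Lions argument (controlling $\tfrac{d}{dt}\bvv_N$ from \eqref{eq.dacomplete}) yields a subsequence with $\bvv_{N_k}\rightharpoonup\bvv$ weak-$\ast$ in $L^\I(H)$, weakly in $L^2_\loc(V)$, strongly in $L^2_\loc(H)$, and, after a further extraction, $\bvv_{N_k}(t)\to\bvv(t)$ in $H$ for a.e.\ $t$. This $\bvv$ is the claimed field. Letting $k\to\I$ in the displayed bound at such a good time $t$ (the source $\delta_{N_k}\to0$ while $C_0e^{-\alpha_0(t-t_0)}$ does not move) gives
\[
\|\bu(t)-\bvv(t)\|_{L^2(\Om)}^2\le C_0\,e^{-\alpha_0(t-t_0)}\qquad\text{for a.e.\ }t\ge t_0 .
\]
Call $S$ this full-measure set of times, so $|S^c|=0$.

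Both conclusions then follow from Cauchy--Schwarz applied to this a.e.\ bound. For the second bullet, $t\in S$ gives $\big|\int_U(\bu-\bvv)(\bx,t)\,d\bx\big|\le |U|^{1/2}\|\bu(t)-\bvv(t)\|_{L^2(\Om)}\le |U|^{1/2}C_0^{1/2}e^{-\alpha_0(t-t_0)/2}$, which decays exponentially. For the first bullet, integrating this over $s\in[t,t+\Delta]$ yields $\int_t^{t+\Delta}\big|\int_U(\bu-\bvv)\,d\bx\big|\,ds\le |U|^{1/2}C_0^{1/2}\tfrac{2}{\alpha_0}e^{-\alpha_0(t-t_0)/2}$, again exponentially small.

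The hard part is the first step: forcing the nudging to deliver a decay rate $\alpha_0$ and constant $C_0$ that are uniform in the cutoff $N$, which is what decouples the $t\to\I$ and $N\to\I$ limits and lets the residual $\delta_N$ be annihilated in the limit without spoiling the exponential-in-$t$ control. A related technical point is the uniform-in-$N$ energy bound needed for compactness despite $\mu_N\to\I$: this is gotten from the difference variable $\bw_N$ rather than from testing \eqref{eq.dacomplete} directly, the potentially $\mu_N$-large feedback contributions being tamed by the spectral inequality and the Gevrey tail (which may force a slightly larger $\sigma_*$). Once these uniformities are secured, the passage to the limit and the extraction of the full-measure set $S$ are routine.
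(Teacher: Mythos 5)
Your overall architecture --- a decay estimate for $\bu-\bvv_N$ with rate and constant uniform in $N$ and a residual $\delta_N\to0$, followed by weak compactness and Cauchy--Schwarz --- is essentially the paper's, and your first step is sound: it is a rerun of the proof of Theorem \ref{thrm.main1} with the interpolation error deleted, and the paper likewise gets $N$- and $\mu$-independent energy bounds from $\|\bvv_N\|\le\|\bvv_N-P_N\bu\|+\|P_N\bu\|$ rather than from testing \eqref{eq.dacomplete} directly. The gap is in the passage to the limit. You invoke Aubin--Lions to obtain strong convergence in $L^2_{\loc}(H)$ and then a.e.-in-$t$ convergence $\bvv_{N_k}(t)\to\bvv(t)$ in $H$, and this is the step that carries the pointwise-in-time decay to the limit. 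Aubin--Lions requires a bound on $\tfrac{d}{dt}\bvv_N$ in, say, $L^2_{\loc}(V')$ that is uniform in $N$, and no such bound is available: the right-hand side of \eqref{eq.dacomplete} contains $\mu_N P_N\chi_\om(\bvv_N-\bu)$ with $\mu_N\sim C_\om e^{C_\om\sqrt N}\to\I$, while the best available control on $\|\chi_\om(\bvv_N-\bu)\|$ is of order $e^{-\alpha_0 t}+\delta_N$ (with $\delta_N^2\gtrsim\lambda_N^{-1/2}$), so $\mu_N\|\chi_\om(\bvv_N-\bu)\|$ is not uniformly bounded at early times nor, provably, at late ones. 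This is precisely the obstruction the paper records in the remark following the corollary: uniform boundedness of the feedback term in the limit would already force $\bvv=\bu$. Without equicontinuity in time, weak-$*$ convergence in $L^\I(0,T;H)$ does not yield convergence, even weak, of $\bvv_{N_k}(t)$ at a.e.\ fixed $t$, so the displayed pointwise bound cannot be passed to the limit as written.

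The conclusions can still be reached from the estimates you have, by a route that avoids pointwise-in-time convergence of the approximants. From $\|\bu-\bvv_{N_k}\|^2_{L^2([t,t+\Delta];H)}\le\Delta\bigl(C_0e^{-\alpha_0(t-t_0)}+\delta_{N_k}^2\bigr)$ and $\bu-\bvv_{N_k}\rightharpoonup\bu-\bvv$ weakly in $L^2([t,t+\Delta];H)$, weak lower semicontinuity of the norm gives $\|\bu-\bvv\|^2_{L^2([t,t+\Delta];H)}\le\Delta\, C_0e^{-\alpha_0(t-t_0)}$, and the Lebesgue differentiation theorem then produces the full-measure set $S$ on which your Cauchy--Schwarz steps go through. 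This is, in substance, what the paper does: it tests $\bv_\epsilon-\bvv$ only against $\chi_{[t,t+\Delta]}\chi_U$ (an admissible test function for weak-$*$ convergence), chooses $\epsilon=\epsilon(t)<e^{-t}$, and uses Lebesgue differentiation for the second bullet, never claiming strong or pointwise convergence of the approximating sequence.
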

This asserts only that a vector field matching the long time behavior of $\bu$ exists given complete local knowledge of $\bu$. The fact that it is obtained as a limit of the solution to \eqref{eq.dacomplete} means it can be approximated numerically. However, this corollary does not say what system governs $\bvv$. This is an interesting direction for future research.

\bigskip We also obtain an exact data assimilation scheme for rougher forcing provided the sub-domain is sufficiently large within the domain. For convenience we work with a bounded domain $\Om$ with smooth boundary and impose zero Dirichlet boundary conditions. We consider a variant of the local data assimilation equation, namely,
\EQ{\label{eq.da2}
\frac d {dt} \bvv +\nu A \bvv +   B(\bvv,\bvv) =   \bbf -\mu I_{h,\om}(\bvv -  \bu).
} 
Note that the interpolant operators  are localized but there is no spectral projection. We manage this by requiring the sub-domain occupy almost the full domain as this allows us to use a helpful observability inequality \cite{XiYo}. 
\begin{thm}[Exact convergence for large subdomains]\label{thrm.main2} Let $\om\subset \Om$ be an open set with smooth boundary. 
Let $\bu$ be the  solution to \eqref{eq.nse} for some $\bu_0\in V$ and $\bbf\in L^\I ((0,\I);H)$. Let $\epsilon>0$ be given. Then there exists a large value $\mu_*=\mu_*(\nu,\om,\bbf)$ and a small value $h_*=h_*(\nu,\om,\mu)$   so that for  any $\mu>\mu_*$ and $h\in (0,h_*)$,   we have
\[
\| \bu(t) -\bvv(t) \|_{L^2(\Om)}\to 0, \qquad \text{as}  \quad t \to \infty
\]
at an exponential rate
provided $d_H(\partial\om ,\partial \Om) \sim (\sqrt \lambda_1 G)^{-1}$, where $\bvv$ is the solution to \eqref{eq.da2} taken with $J_{h,N,\om}=  I_{h,N,\om}$   and zero initial data and $d_H$ denotes the Hausdorff distance between two compact sets.
\end{thm}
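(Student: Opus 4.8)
The plan is to run an Azouani--Olson--Titi--type energy argument on the difference $\bw=\bu-\bvv$, with the essential novelty that the \emph{local} nudging term is converted into \emph{global} damping through the observability inequality of \cite{XiYo} rather than through spectral filtering. Before estimating, one notes that \eqref{eq.da2} is a genuine two-dimensional Navier--Stokes system perturbed by the bounded operator $\mu I_{h,\om}$ together with the known source $\mu I_{h,\om}\bu\in L^\I(0,\I;H)$; since $\bu_0\in V$ forces $\bu\in L^\I(0,\I;V)$, standard 2D theory yields a unique global solution $\bvv\in L^\I(0,\I;H)\cap L^2(0,T;V)$ with $\bvv(0)=0$, so $\bw$ is well defined. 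Subtracting \eqref{eq.da2} from \eqref{eq.nse} gives
\[
\frac{d}{dt}\bw+\nu A\bw+B(\bw,\bu)+B(\bvv,\bw)=-\mu I_{h,\om}\bw ,
\]
and pairing with $\bw$ in $H$, using the orthogonality $\l B(\bvv,\bw),\bw\r=0$, produces
\[
\tfrac12\frac{d}{dt}\|\bw\|_{L^2(\Om)}^2+\nu\|\nabla\bw\|_{L^2(\Om)}^2+\l B(\bw,\bu),\bw\r=-\mu\l I_{h,\om}\bw,\bw\r .
\]

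Next I would dispatch the two remaining contributions. For the nonlinearity, the 2D Ladyzhenskaya and Young inequalities give $|\l B(\bw,\bu),\bw\r|\le c\|\bw\|_{L^2(\Om)}\|\nabla\bw\|_{L^2(\Om)}\|\nabla\bu\|_{L^2(\Om)}\le \tfrac{\nu}{4}\|\nabla\bw\|_{L^2(\Om)}^2+\tfrac{c^2}{\nu}\|\nabla\bu\|_{L^2(\Om)}^2\|\bw\|_{L^2(\Om)}^2$. For the nudging term, the key algebraic step is that $I_{h,\om}\bw$ is supported in $\om$ and, by the local Type~1 approximation property established in Section \ref{sec.prelim}, approximates $\chi_\om\bw$ to order $h$; writing $-\mu\l I_{h,\om}\bw,\bw\r=-\mu\|\bw\|_{L^2(\om)}^2+\mu\l \chi_\om\bw-I_{h,\om}\bw,\bw\r$ and estimating the last term by $\mu c_0 h\|\nabla\bw\|_{L^2(\Om)}\|\bw\|_{L^2(\Om)}$ yields, after Young, the bound $-\mu\|\bw\|_{L^2(\om)}^2+\tfrac{\nu}{4}\|\nabla\bw\|_{L^2(\Om)}^2+\tfrac{c_0^2\mu^2h^2}{\nu}\|\bw\|_{L^2(\Om)}^2$.

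The decisive point is to upgrade the local damping $-\mu\|\bw\|_{L^2(\om)}^2$ to global damping. Since $\bw$ vanishes on $\partial\Om$ and $\Om\setminus\om$ is a collar of width comparable to $d:=d_H(\partial\om,\partial\Om)$, the observability inequality of \cite{XiYo} (morally a Poincar\'e estimate on the thin collar, where $\bw=0$ on $\partial\Om$) gives $\|\bw\|_{L^2(\Om)}^2\le \|\bw\|_{L^2(\om)}^2+Cd^2\|\nabla\bw\|_{L^2(\Om)}^2$, hence $-\mu\|\bw\|_{L^2(\om)}^2\le -\mu\|\bw\|_{L^2(\Om)}^2+C\mu d^2\|\nabla\bw\|_{L^2(\Om)}^2$. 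Collecting all of the above,
\[
\tfrac12\frac{d}{dt}\|\bw\|_{L^2(\Om)}^2+\Big(\tfrac{\nu}{2}-C\mu d^2\Big)\|\nabla\bw\|_{L^2(\Om)}^2+\Big(\mu-\tfrac{c^2}{\nu}\|\nabla\bu\|_{L^2(\Om)}^2-\tfrac{c_0^2\mu^2h^2}{\nu}\Big)\|\bw\|_{L^2(\Om)}^2\le0 .
\]

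Finally I would choose the parameters to make both coefficients positive. Using the uniform-in-time a priori bound $\limsup_{t\to\I}\|\nabla\bu\|_{L^2(\Om)}^2\lesssim \nu^2\lambda_1 G^2$ for the 2D equations, fix $\mu=\mu_*(\nu,\om,\bbf)\sim \nu\lambda_1 G^2$ so that $\mu$ dominates $\tfrac{c^2}{\nu}\|\nabla\bu\|_{L^2(\Om)}^2$ for large $t$; then take $h<h_*(\nu,\om,\mu)\sim\nu^{1/2}\mu^{-1/2}$ so the $\mu^2h^2$ term is absorbed into $\mu/2$; the gradient coefficient is positive precisely when $d^2\lesssim \nu/\mu\sim(\lambda_1 G^2)^{-1}$, i.e.\ when $d_H(\partial\om,\partial\Om)\sim(\sqrt{\lambda_1}G)^{-1}$, which is exactly the stated geometric hypothesis. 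Dropping the nonnegative gradient term and applying Gr\"onwall's inequality then gives $\|\bw(t)\|_{L^2(\Om)}^2\le e^{-\mu(t-t_0)/2}\|\bw(t_0)\|_{L^2(\Om)}^2\to0$ exponentially. I expect the main obstacle to be twofold: securing the observability inequality with the correct $d^2$ scaling of its constant (so that the balance reproduces exactly $(\sqrt{\lambda_1}G)^{-1}$), and justifying the uniform-in-time bound on $\|\nabla\bu\|_{L^2(\Om)}$ that feeds the choice of $\mu$; once these are in hand, the remainder is the standard nudging computation.
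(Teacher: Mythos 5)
Your argument is correct and is essentially the proof the paper sketches in Remark \ref{rmk.da2}: an Azouani--Olson--Titi energy estimate on $\bw=\bu-\bvv$ in which the interpolation error is absorbed into the diffusion and the observability inequality of Lemma \ref{lemma.xi.yo} bridges the local damping and the global nonlinear term, producing the same balance $d_H(\partial\om,\partial\Om)\sim(\sqrt{\lambda_1}\,G)^{-1}$. The only (algebraically equivalent) difference is that you apply the observability inequality to lower-bound $\mu\|\bw\|_{L^2(\om)}^2$ by a global quantity, whereas the paper applies it to upper-bound $\tfrac{1}{2\nu}\|\nabla\bu\|^2\|\bw\|_{L^2(\Om)}^2$ by local damping plus a gradient contribution; in either form one should note that the coefficient of $\|\bw\|_{L^2(\om)}^2$ coming from Lemma \ref{lemma.xi.yo} is $k/\lambda_1(\Om\setminus\overline{\om})$ rather than $1$, which only shifts the choice of $\mu_*$.
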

As we will explain in Remark \ref{rmk.da2}, this result allows one to avoid the collection of measurements near the (possibly turbulent) boundary layer, which may be inherently error prone. Due to Remark \ref{rmk.da2}, $d_H(\partial\om ,\partial \Om)$ is comparable to the value $h$ found in \cite{AOT} meaning that the volume elements adjacent to the boundary of $\Om$ can be eliminated from the interpolant operators in \cite{AOT}.  Although this may be a  small number of volume elements, they are adjacent to the boundary layer, which may be turbulent for flows with large Reynolds numbers and, consequently, subject to large measurement error. 
It is interesting also from a mathematical viewpoint as well, since it is an exact convergence result based on a local interpolant.  As the argument is similar to Theorem \ref{thrm.main1}, we only sketch the details of a proof---see Remarks \ref{rmk.existence.da2} and \ref{rmk.da2}

We note that the localization problem for determining nodes in the sense of \cite{JonesTiti2} has been solved for analytic forcing \cite{FR,FKR}. A general theme is that data assimilation implies determining quantity type results but not the other away around. For the localization problem, this appears to be the case. Indeed, the argument in \cite{FR,FKR} is applied at the level of elements of the attractor, and both solutions are analytic. In data assimilation, the solution to the localized data assimilation equation does not {\it a priori} converge to the global attractor for $\bbf$. Furthermore, $\bv$ is not analytic because it is driven by a term with compact support. Hence there are clear barriers to adapting the methods in \cite{FR,FKR} to the data assimilation problem.

\section{Preliminaries}\label{sec.prelim}

\subsection{Strong solutions to the 2D Navier-Stokes}
Recall that given $\bu_0\in V$ and $\bbf\in L^\I((0,\I);H)$, \eqref{eq.nse} has a unique global solution $\bu$ so that 
\[
\bu\in C([0,T];V)\cap L^2((0,T);D(A)) \text{ and }\frac {d\bu} {dt}\in L^2((0,T);H),
\]
for every $T>0$ \cite{cf}. Furthermore we have that there exists a time $t_0=t_0(\bu_0)$ so that,
for all $t\geq t_0$,\[
\|\bu(t)\|_{L^2(\Om)}^2\leq 2 \nu^2G^2 \text{ and }\int_t^{t+T} \|A^{1/2}\bu(s)\|_{L^2(\Om)}^2\,ds \leq 2(1+T\nu\lambda_1)\nu G^2,
\] 
where $T>0$ is fixed and $G$ denotes the Grashof number which is defined to be 
\EQ{\label{eq.Grashof}
G= \frac 1 {\nu^2\lambda_1}\limsup_{t\to \I} \|\bbf(t)\|_{L^2(\Om)}.
}
The above are true for both Dirichlet boundary conditions on a bounded domain with $C^2$ boundary or for periodic boundary conditions. For the periodic case we have the following improvement: There exists a time $t_0=t_0(\bu_0)$ so that,
for all $t\geq t_0$,\[
\|A^{1/2}\bu(t)\|_{L^2(\Om)}^2\leq 2 \nu^2\lambda_1 G^2 \text{ and }\int_t^{t+T} \|A\bu(s)\|_{L^2(\Om)}^2\,ds \leq 2(1+T\nu\lambda_1)\nu\lambda_1 G^2,
\] 
where $T>0$ is fixed.

\subsection{$L^2$ Gevrey classes}\label{subsec.gevrey}
Recall that if $\bu\in L^2(\Om)$ is periodic and has zero mean  and $\Om= [-L/2,L/2]^2$, then 
\[
\bu (\bx)= 
\sum_{\bk\in \Z^2\setminus\{0\}} \hat \bu_\bk e^{\frac{2\pi i}L \bk\cdot \bx} ,
\]
where 
\[
 \hat \bu_\bk  = \int_\Om \bu(\by) e^{-\frac{2\pi i}L \bk\cdot \by}\,d\by.
\]

Working on the periodic box $[-L/2,L/2]^2$, we define the Gevrey space $D(e^{\sigma A^{s}})$ to be those elements of $H$ satisfying 
\[
\| \bu\|_{D(e^{\sigma A^{s}})}^2:=  L^2\sum_{k\in \Z^2} e^{2\sigma|2\pi \frac {k} L|^{2s}}|\hat \bu_\bk|^2<\I.
\]
Analyticity corresponds to $s=1/2$.
Note that for Gevrey class forcing, a solution $\bu$ to \eqref{eq.nse} becomes and remains Gevrey regular for positive times. Indeed, for large enough times we have the following uniform bound \cite[p.~74]{FMRT}
\[
|\hat \bu_\bk|^2\leq C \lambda_1 \nu^2 |\bk|^{-2} e^{-4\pi \delta_0|\bk| /L}[1+G^2]
\]
where $\delta_0$ is inversely related to $G$.  For our applications, this is insufficient so we must impose an explicit assumption that the analyticity radius is large. In particular, we assume 
\[
\limsup_{t\to \I} \| \bu(t)\|_{D(A^{1/2}e^{\sigma A^{1/2}}  )}=:M<\I,
\]
where 
\[
\| \bu(t)\|_{D(A^{1/2}e^{\sigma A^{1/2}}  )} = |A^{1/2}e^{\sigma A^{1/2}} \bu|.
\]
The preceding condition implies 
\[
\limsup_{t\to \I} |\hat \bu_\bk|^2(t) \leq  M^2  |\bk|^{-2} e^{-4\pi \sigma|\bk| /L}
\]
Gevrey class and analytic solutions to \eqref{eq.nse} and other fluid models have been studied extensively. A partial list is \cite{Biswas-Swanson,BGK,CKV,FoTe,GK1,K1}.

\subsection{The Stokes operator}\label{subsec.stokes}
We denote by $\phi_i$ the eigenvectors and $\lambda_i$ the eigenvalues of the Stokes operator.  $P_N$ denotes the projection operator from $L^2$ onto $\operatorname{span}(\phi_1,\ldots,\phi_N)$. 

Recall from \cite{cf} that for periodic domains and restricting to functions with zero mean that the Stokes operator $A$ agrees with $-\Delta$. Furthermore, abusing notation slightly, the eigenfunctions $\phi_\bk$ of $A$ can be written explicitly in terms of $\{ e^{2\pi i \frac \bk L \cdot \bx}\}_{\bk\in \Z^2}$ as
\[
\phi_\bk = a_\bk L^{-1}e^{2\pi i \frac \bk L \cdot \bx} +\bar a_{\bk}L^{-1}e^{-2\pi i \frac \bk L \cdot \bx},
\] 
where $a_\bk\in \mathbb C^2$ satisfy $a_\bk\cdot \bk =0$.  For each $\bk\in \Z^2\setminus \{0\}$ there are actually two eigenfunctions of the above form but we suppress this. Note that $\{\phi_\bk\}$ is orthonormal and all elements have mean zero. The eigenvalues of $A$ are the values $4\pi^2L^{-2}|\bk|^2$ for $\bk\in \Z^2\setminus 0$. The eigenfunctions can be ordered as $\{\phi_j\}_{j\in \N}$ so that the corresponding eigenvalues $\lambda_j$ are nondecreasing. By symmetry, the multiplicity of the eigenvalue $\lambda_j$ is 
\[
 \#\{\bk\in \Z^2:|\bk|^2= \lambda_j  \lambda_1^{-1} \}.
\]
If we are given an eigenvalue $\lambda_j$, then this corresponds to points $\bk\in \Z^2\setminus \{0\}$ in the square $[-K,K]^2$ where $ K^2\sim j$. Furthermore, we have asymptotically that 
\[
\lambda_j \lesssim j,
\]
implying 
\[
K^2\lesssim j.
\]
Plainly then, if $\bu \in \operatorname{span}(\phi_1,\ldots,\phi_N)$, there exists $K\sim \sqrt{N}$ so that $\hat \bu$ is  supported in $[-K,K]^2$.

For other properties of the Stokes operator, as well as its eigenvectors and eigenvalues, see \cite[II.6]{FMRT} as well as \cite{cf,Temam}.

\subsection{Approximation property of local interpolant operators}
We prove analogues of \eqref{ineq.type1} and \eqref{ineq.type2} for  local interpolant operators.
The local volume interpolant  operator \eqref{localinterp} satisfies the  approximation property
\EQ{  \label{localvolineq}
\|I_{h,\om}f -f \|_{L^2(\om)}^2\leq c_0 h^2 \|f\|_{H^1(\Om)}^2.
}
Structurally this is identical to \eqref{ineq.type1} but the operator is local so we check details.
By the Poincar\'e inequality,
\[
\|I_{h,\om}f -f \|_{L^2(\om)}^2 \leq \sum_{i=1}^M |S_i\cap \om| C_{S_i\cap \om} \|  \nabla (f\chi_\om)\|_{L^2(S_i\cap \om)}^2,
\]
where $ C_{S_i\cap \om}$ is the Poincar\'e constant for $S_i\cap \om$  and $|\cdot|$ denotes 2D Lebesgue measure. These constants are uniformly bounded because the sets $ C_{S_i\cap \om}$  all have bounded diameters. We thus obtain
\[
\|I_{h,\om}f -f \|_{L^2(\om)}^2 \leq c_0h^2 \sum_{i=1}^M   \|  \nabla f\|_{L^2(S_i\cap \om)}^2 = c_0h^2   \|f\|_{H^1(\om)}^2 \leq c_0h^2   \|f\|_{H^1(\Om)}^2,
\]
which is \eqref{localvolineq}.

The local nodal interpolant operator \eqref{localnodal}   satisfies the  approximation property
\be  \label{localnodalineq}
\|\mathcal I_{h,\om}f -f \|_{L^2(\om)}^2 \leq c_0^2 h^4 \|f\|_{H^2(\Om)}^2.
\ee
Again, this follows essentially  the original argument in \cite{AOT} which is adapted from \cite{JonesTiti2}. 
Recall from \cite{JonesTiti2,AOT} that if $Q$ is a cube of side length $h$ and $\bx,\by\in Q$, then
\EQ{
|\phi(\bx)-\phi(\by)|\leq c_0 h \|A\phi\|_{L^2(Q)}.
}
Then,
\EQ{
|\chi_\om f - \mathcal I_{h,\om} f |^2&= \sum_{i=1}^N \int_{S_i\cap \om}|f(\bx)-f(\bx_i)|^2\,d\bx
\\&\leq \sum_{i=1}^N \int_{S_i }|f(\bx)-f(\bx_i)|^2\,d\bx
\\&\leq \sum_{i=1}^N c_0^2 h^2 |S_i| \|Af\|_{L^2(S_i)}^2
\\&\leq c_0^2 h^4 \|f\|_{H^2(\Om)}^2.
}

\subsection{Spectral inequalities}

 For our approximate data assimilation result, we use a spectral inequality of Egidi and Veseli\'c for the torus \cite{EV,EV2}. This extends earlier work on $\R^d$ \cite{Ko}. The spectral inequality applies to ``thick'' sets. A set $S$  is thick in $\R^2$ if there exists $\ga\in (0,1]$ and $a= (a_1,a_2)$ where $a_i> 0$ so that for every $\bx\in \R^2$,
\[
 | (S+\bx)\cap ([0,a_1]\times [0,a_2])| \geq \ga a_1a_2\;.
\]
  It is easy to see that any open set in $[-L/2,L/2]^2$ which is periodically extended to $\R^2$ is thick.
The spectral theorem on the torus is the following.
\begin{thm}[\cite{EV}]\label{thrm.spectral.torus}
Let $f\in L^2(\Om)$ where $\Om$ denotes the torus $[0, L_1]\times [0, L_2]$. Assume $\supp \hat f \subset J$ where $J$ is a rectangle in $\R^2$ with sides parallel to coordinate axes and of length $b_1$ and $b_2$. Let $b=(b_1,b_2)$. Let $S\subset \R^2$ be a $(\ga,a)$-thick set with $a=(a_1,a_2)$ so that $0<a_j<2\pi L_j$ for $j=1,2$. Then 
\[
\| f\|_{L^2(\Om)}\leq C \gamma^{-c a\cdot b  - \frac {13}2} \|f\|_{L^2(S\cap \Om)}.
\]
\end{thm}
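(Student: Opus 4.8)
The plan is to follow the Logvinenko--Sereda strategy in the quantitative form developed by Kovrijkine, adapted to the periodic setting. The key structural fact is that, since $\supp\hat f\subset J$ is a bounded rectangle, $f$ is a trigonometric polynomial, hence the restriction of an entire function of exponential type; in particular its periodic extension to $\R^2$ is band-limited with spectrum in $J$, is real-analytic, and has derivatives globally controlled by its $L^2$ norm. After a harmless modulation centering $J$ at the origin, Bernstein's inequality gives for every multi-index $\alpha=(\alpha_1,\alpha_2)$
\[
\|\partial^\alpha f\|_{L^2(\Om)}\leq b_1^{\alpha_1}b_2^{\alpha_2}\,\|f\|_{L^2(\Om)},
\]
because each surviving Fourier mode $\hat f_\bk$ has $|2\pi k_j/L_j|\lesssim b_j$. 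I would then tile $\R^2$ by translates of the thickness cell $Q_0=[0,a_1]\times[0,a_2]$, prove a local spectral estimate on each translate, and average over one period to return to $\Om$.

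First I would set up a good-cell dichotomy. Averaging the global Bernstein bounds over the cells $\{Q_m\}$ in one period and applying a Chebyshev/pigeonhole argument shows that a subcollection carrying at least half the $L^2$ mass satisfies the \emph{local} reverse bounds $\|\partial^\alpha f\|_{L^2(Q_m)}\leq (C b)^\alpha\|f\|_{L^2(Q_m)}$ with a fixed multiple of the global constant; call these cells good. On a good cell of side lengths $a_1,a_2$ this local control of derivatives means $f$ is well approximated in $L^2(Q_m)$ by its Taylor polynomial of bidegree $(d_1,d_2)$ with $d_j\sim a_jb_j$, the remainder being a small fraction of $\|f\|_{L^2(Q_m)}$. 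The bad cells carry at most half the mass and are disposed of by a standard bootstrapping iteration.

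Next comes the local Remez/Tur\'an inequality, which is the analytic heart of the matter. In one variable, a polynomial $p$ of degree $d$ on an interval $I$ obeys $\|p\|_{L^2(I)}\leq (C/\gamma)^{d}\|p\|_{L^2(E)}$ whenever $E\subset I$ has $|E|\geq\gamma|I|$; iterating this in the two coordinate directions and absorbing the Taylor remainder yields, on each good cell,
\[
\|f\|_{L^2(Q_m)}\leq (C/\gamma)^{c(a_1b_1+a_2b_2)+c'}\,\|f\|_{L^2(E_m)},\qquad E_m:=S\cap Q_m,
\]
where the thickness hypothesis guarantees $|E_m|\geq\gamma|Q_m|$, so $E_m$ is admissible. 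Summing the squares over the good cells, absorbing the bad cells, and using that the $Q_m$ tile the period while $\bigcup_m E_m\subset S$ recovers, after averaging back to $\Om$,
\[
\|f\|_{L^2(\Om)}\leq C\,\gamma^{-c\,a\cdot b-\frac{13}{2}}\,\|f\|_{L^2(S\cap\Om)},
\]
the surplus exponent $\tfrac{13}{2}$ being the accumulated cost of the dimension-two iteration and the good-cell thresholds.

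The main obstacle is the local Remez step with the correct, linear-in-$a\cdot b$ exponent. Two points demand care: matching the anisotropic tiling to the thickness vector $a$ so the effective bidegree is $(a_1b_1,a_2b_2)$, yielding the sharp product $a\cdot b$ rather than a cruder quantity such as $(\max_j a_jb_j)^2$; and converting the merely global Bernstein bound into the local derivative control needed to justify the Taylor comparison, which is exactly what the good-cell dichotomy supplies. Tracking every constant through the two-dimensional iteration so as to land precisely at the exponent $-c\,a\cdot b-\tfrac{13}{2}$ is the delicate bookkeeping that upgrades a qualitative unique-continuation statement into the quantitative inequality required here.
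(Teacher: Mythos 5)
The paper does not prove this statement: Theorem \ref{thrm.spectral.torus} is quoted verbatim from Egidi--Veseli\'c \cite{EV} and used as a black box, so there is no internal proof to compare against. Your sketch does follow the strategy that underlies the cited result, namely Kovrijkine's quantitative Logvinenko--Sereda argument (Bernstein bounds for band-limited functions, a Chebyshev-type selection of ``good'' cells carrying a fixed fraction of the mass, a local Remez/Tur\'an estimate on each good cell against $S\cap Q_m$, and summation), adapted to the torus. At that level the skeleton is correct and is essentially the argument of \cite{EV,Ko}.

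Two steps as you state them would not go through as written. First, the bad cells are not ``disposed of by a standard bootstrapping iteration'': the whole point of the weighted Chebyshev selection is that the good cells already carry at least half of $\|f\|_{L^2(\Om)}^2$, so the bad cells are simply discarded; there is no iteration, and inserting one would wreck the constant tracking that produces the exponent linear in $a\cdot b$. Second, and more seriously, ``iterating the one-dimensional Remez inequality in the two coordinate directions'' hides the genuine difficulty of the two-dimensional local step. The thickness hypothesis gives $|S\cap Q_m|\geq \gamma|Q_m|$, but this does \emph{not} imply that the one-dimensional slices of $S\cap Q_m$ in a fixed coordinate direction have measure $\geq\gamma$ times the segment length, so the 1D Remez inequality cannot simply be applied slice by slice and then in the other variable. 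Kovrijkine's argument handles this by a Fubini selection of a positive fraction of good lines (or, equivalently, by working with the restriction of the analytic extension of $f$ to well-chosen segments and invoking a complex-analytic local estimate with exponent $\log M/\log 2$ rather than a polynomial Remez inequality after Taylor truncation); without that step the claimed bound $\|f\|_{L^2(Q_m)}\leq(C/\gamma)^{c\,a\cdot b+c'}\|f\|_{L^2(S\cap Q_m)}$ is unjustified. Relatedly, the $L^2$ form of the Remez inequality with constant $(C/\gamma)^d$ needs an argument (one normally passes through $L^\infty$ on the cell using a reverse H\"older inequality for polynomials of degree $d$), and the Taylor-remainder absorption requires taking $d_j$ a sufficiently large multiple of $a_jb_j$ so that Stirling beats the Bernstein constants. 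These are exactly the points where the quantitative exponent $-c\,a\cdot b-\tfrac{13}{2}$ is won or lost.
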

For simplicity we  take $L_1=L_2=L$ and $S$ to be the periodic extension of a ball with radius $r<L/2$ to all of $\R^2$. It is not difficult to see that this set is thick with 
$$
a_i = L-r\;, \qquad \text{and}\qquad \ga= \frac {\pi r^2}{4(L-r)^2}\;.
$$ 
We can also take $b_1=b_2=2K$ where $K\in \N$ is fixed and $J$ centered at the origin. Then, for any open set $\om$ contained in $\Om$, we have as a consequence of Theorem \ref{thrm.spectral.torus} applied to a ball of radius $r$ contained in $\Om$ that 
\EQ{\label{ineq.spectral.torus.1}
\sum_{\bk\in [-K,K]^2\cap \Z^2} | \td f_\bk|^2  \leq C \gamma^{-2c (L-r)K  -13} \int_\om \bigg| \sum_{\bk\in [-K,K]^2\cap \Z^2} \td f_\bk e^{2\pi i \frac k L \cdot \bx}  \bigg|^2\,d\bx.
}
where $\td f_\bk$ is the Fourier coefficient for $\bk\in \Z^2$. We prefer to re-write this inequality in the following form
\EQ{
\label{ineq.spectral.torus.2} \|f\|_{L^2(\Om)}^2 \leq  C_{\om} e^{ C_{\om} K} \|f\|_{L^2(\om)}^2,
}  
where  $C_\om$ represents positive constants which are independent of $K$ and $\operatorname{range}\hat f \subset [-K,K]^2$. Based on the discussion in Section \ref{subsec.stokes}, we can also formulate this result in terms of the Stokes operator: If $f\in \operatorname{span}(\phi_1,\ldots ,\phi_N)$, then
\EQ{
\label{ineq.spectral.torus.3} \|f\|_{L^2(\Om)}^2 \leq  C_{\om} e^{ C_{\om} \sqrt N} \|f\|_{L^2(\om)}^2.
}  

For bounded domains,  there is a spectral inequality for the Stokes operator due to Chaves, Silva and Lebeau \cite{CSL}.
\begin{thm}[\cite{CSL}]\label{thrm.CSL}
Let $\om \subset \Om$ be a nonempty open set. There exist constants $M>0$ and $K>0$ so that for every sequence of complex numbers $z_j$ and every real $\Lambda>0$ we have 
\EQ{\label{ineq.spectral}
\sum_{\lambda_j\leq \Lambda} |z_j|^2 =\int_{\Om} \bigg| \sum_{\lambda_j \leq \Lambda} z_j  \phi_j \bigg|^2\,dx \leq Me^{K\sqrt{\Lambda}} \int_{\om} \bigg|   \sum_{\lambda_j\leq \Lambda} z_j\phi_j (\bx)  \bigg|^2\,d\bx
}
where $\phi_j$ are the eigenvectors and $\lambda_j$ are the eigenvalues of the Stokes operator.
\end{thm}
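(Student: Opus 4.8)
The plan is to follow the Lebeau--Robbiano strategy for spectral inequalities, adapted to the Stokes operator. On the torus the analogous scalar estimate \eqref{ineq.spectral.torus.1} can be read off from harmonic analysis of band-limited functions on thick sets, which is how \cite{EV} proceed; but on a bounded domain the eigenfunctions $\phi_j$ are no longer exponentials, so instead I would lift the problem to an elliptic boundary value problem in one extra variable and exploit a Carleman estimate (this is also the route of \cite{CSL}). Write $u=\sum_{\lambda_j\le\Lambda}z_j\phi_j$, so that \eqref{ineq.spectral} is precisely the assertion $\|u\|_{L^2(\Om)}^2\le Me^{K\sqrt{\Lambda}}\|u\|_{L^2(\om)}^2$, i.e.\ a lower bound of order $e^{-K\sqrt\Lambda}$ on the smallest singular value of the restriction-to-$\om$ map on the spectral window.

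First I would introduce the augmented field
\[
w(\bx,s)=\sum_{\lambda_j\le\Lambda}z_j\cosh(s\sqrt{\lambda_j})\,\phi_j(\bx),\qquad (\bx,s)\in\Om\times(-\tau,\tau),
\]
for a fixed $\tau>0$. Since $A\phi_j=\lambda_j\phi_j$ and $\partial_s^2\cosh(s\sqrt{\lambda_j})=\lambda_j\cosh(s\sqrt{\lambda_j})$, the field $w$ solves a stationary Stokes system one dimension up, namely $\partial_s^2 w+\Delta w=\nabla q$ with $\nabla\cdot w=0$ in $\Om\times(-\tau,\tau)$, together with the lateral Dirichlet condition $w=0$ on $\partial\Om\times(-\tau,\tau)$. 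Its Cauchy data on the slice $\{s=0\}$ are $w(\cdot,0)=u$ and $\partial_s w(\cdot,0)=0$. Two elementary bounds are recorded at this stage. Since $\cosh(s\sqrt{\lambda_j})\le e^{\tau\sqrt{\Lambda}}$ and $\|\nabla\phi_j\|^2=\lambda_j$ on the window, the full energy obeys the global upper bound $\|w\|_{H^1(\Om\times(-\tau,\tau))}\le Ce^{\tau\sqrt{\Lambda}}\|u\|_{L^2(\Om)}$; and since $\cosh\ge1$, the interior slice satisfies $\|w\|_{H^1(\Om\times(-\tau/2,\tau/2))}\ge c\,\|u\|_{L^2(\Om)}$.

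The heart of the argument is a quantitative unique-continuation (interpolation) inequality for the augmented Stokes operator: there exist $\theta\in(0,1)$ and $C>0$, \emph{independent of $\Lambda$}, so that for a subdomain $\om'\Subset\om$,
\[
\|w\|_{H^1(\Om\times(-\tau/2,\tau/2))}\le C\,\|w\|_{H^1(\Om\times(-\tau,\tau))}^{1-\theta}\,\|u\|_{H^1(\om')}^{\theta}.
\]
This propagates the smallness of the Cauchy data of $w$ on the piece $\om'\times\{0\}$ of the hypersurface $\{s=0\}$ --- where $\partial_s w\equiv0$, so the data reduce to $w(\cdot,0)=u$ --- across to all of $\Om\times\{0\}$, and would be proved from a Carleman estimate for $\partial_s^2+\Delta$ coupled to the incompressibility constraint. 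Inserting the two elementary bounds above and noting that $\|u\|_{L^2(\Om)}$ appears on the right with exponent $1-\theta<1$, I would divide through to reach $\|u\|_{L^2(\Om)}\le C^{1/\theta}e^{\frac{1-\theta}{\theta}\tau\sqrt{\Lambda}}\,\|u\|_{H^1(\om')}$. Finally, because $u$ lies in the spectral window $\lambda_j\le\Lambda$, a local Bernstein (interior elliptic regularity) estimate gives $\|u\|_{H^1(\om')}\le C\sqrt{\Lambda}\,\|u\|_{L^2(\om)}$ with $\om'\Subset\om$; absorbing this harmless polynomial factor yields \eqref{ineq.spectral} with $K\sim\tau/\theta$ and $M\sim C^{2/\theta}$.

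The main obstacle is the Carleman estimate itself for the Stokes system. Unlike the scalar Laplacian, the pressure $q$ couples the components and the constraint $\nabla\cdot w=0$ must be respected, so naive componentwise Carleman estimates fail. I would handle this along the lines of Fursikov--Imanuvilov and Chaves-Silva--Lebeau, either eliminating the pressure by applying the curl (at the cost of controlling the resulting boundary terms) or carrying $q$ as an additional unknown equipped with its own Carleman weight, and then verifying that the interpolation constants $C,\theta$ are uniform in $\Lambda$ --- only the explicit $\cosh$ and $\sinh$ factors are permitted to depend on $\Lambda$. A secondary technical point is the behavior near $\partial\Om$, where the smoothness of the boundary is used to run the Carleman estimate up to the lateral boundary; on the torus this difficulty is absent, which is precisely why the cleaner Fourier-analytic route of \cite{EV} is available there.
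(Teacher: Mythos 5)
The paper does not prove this statement---it is imported verbatim from Chaves-Silva--Lebeau \cite{CSL}, with only the remark that such inequalities go back to the Carleman/interpolation machinery of \cite{LebeauLeRobbiano,LebeauZuazua,RL2012}. Your sketch is an accurate reconstruction of exactly that route: the $\cosh$-augmentation in an extra variable, the two elementary energy bounds, the quantitative unique continuation (interpolation) inequality with $\Lambda$-independent $C,\theta$, and the division trick are all correct and are the strategy of \cite{CSL}. Two caveats. First, the step you defer---the Carleman estimate for the augmented Stokes system with the pressure and the divergence constraint---is not a technical afterthought; it is essentially the entire content of \cite{CSL}, so as a self-contained proof this is a skeleton rather than an argument. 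Second, the ``local Bernstein'' bound $\|u\|_{H^1(\om')}\le C\sqrt{\Lambda}\,\|u\|_{L^2(\om)}$ is not immediate on a bounded domain: interior Caccioppoli for the Stokes system controls $\|\nabla u\|_{L^2(\om')}$ by $\|u\|_{L^2(\om)}$ plus a term involving $\|Au\|_{L^2(\om)}$, and the latter is only bounded by $\Lambda\|u\|_{L^2(\Om)}$ \emph{globally}; one must then absorb the resulting $\|u\|_{L^2(\Om)}^{1/2}$ factor back into the left-hand side via Young's inequality (at the harmless cost of a polynomial-in-$\Lambda$ prefactor). With that repair the reduction from $H^1(\om')$ to $L^2(\om)$ closes, but as written the step is stated too casually.
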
 

Spectral inequalities of this form were  established earlier for elliptic operators on a bounded domain in \cite{LebeauZuazua,RL2012} using Carleman inequalities and a pointwise interpolation estimate from \cite{LebeauLeRobbiano}. 
Although in Theorem \ref{thrm.main1}, we consider only the case of the periodic boundary conditions, similar techniques can be employed for the bounded domain as well ---see Remark \ref{rmk.bounded.domain}.

\subsection{An observation inequality}
Theorem \ref{thrm.main2} is an exact convergence result when the observation domain is almost the entire domain. Our main technical tool for this is the following observation inequality due to Xin and Yongyong \cite{XiYo}.
\begin{lem}[\cite{XiYo}]\label{lemma.xi.yo} Let $\om$ and $\Om$  be bounded domains with smooth boundary so that $\om\subset \Om$.   
For any $\epsilon>0$, there exists $K(\epsilon)>0$ so that for $k>K$, the following inequality holds
\EQ{\label{ineq.spectral.bounded}
\int_{\Om} |\nabla u|^2 + k \chi_\om |u|^2\,d\bx \geq (\lambda_1(\om) - \epsilon) \int_{\Om} |u|^2\,d\bx,
}
for $u\in H_0^1(\Om)$ and $\lambda_1$ the first eigenvalue of the Laplace operator on the domain $\Om\setminus \overline{\om}$ with zero-Dirichlet boundary conditions. As $|\om| \to |\Om|$
, $\lambda_1(\om)$ increases without bound. 
\end{lem}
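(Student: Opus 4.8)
The plan is to read the left-hand side of \eqref{ineq.spectral.bounded} as the Rayleigh quotient of the Dirichlet realization of $-\Delta + k\chi_\om$ on $\Om$ and to track its bottom eigenvalue as the coupling $k\to\infty$. Write $Q_k(u)=\int_\Om |\nabla u|^2 + k\chi_\om|u|^2\,d\bx$ and set
\[
\mu_1(k)=\inf_{0\neq u\in H_0^1(\Om)}\frac{Q_k(u)}{\|u\|_{L^2(\Om)}^2}.
\]
Then \eqref{ineq.spectral.bounded} is precisely the assertion $\mu_1(k)\geq \lambda_1(\om)-\epsilon$, so it suffices to show that $\mu_1(k)$ increases to $\lambda_1(\om)$ as $k\to\infty$, where $\lambda_1(\om)$ denotes the first Dirichlet eigenvalue of $-\Delta$ on $\Om\setminus\overline{\om}$. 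Monotonicity of $\mu_1(k)$ in $k$ is immediate since $Q_k(u)$ is pointwise nondecreasing in $k$, so $\mu_1(k)$ converges to some $\mu_1(\infty)\in(0,\infty]$.

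For the upper bound I would extend the first Dirichlet eigenfunction of $\Om\setminus\overline{\om}$ by zero to all of $\Om$; smoothness of $\partial\om$ guarantees the extension $\phi$ lies in $H_0^1(\Om)$ and vanishes a.e.\ on $\om$, so the penalty term disappears and $Q_k(\phi)=\lambda_1(\om)\|\phi\|_{L^2(\Om)}^2$. Hence $\mu_1(k)\leq \lambda_1(\om)$ for every $k$, and in particular $\mu_1(\infty)\leq\lambda_1(\om)$.

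The matching lower bound is the heart of the argument and proceeds by compactness. Fix $k_n\to\infty$ and let $u_n$ be $L^2$-normalized minimizers of the Rayleigh quotient (which exist by the compact embedding $H_0^1(\Om)\hookrightarrow L^2(\Om)$), so that $\int_\Om|\nabla u_n|^2 + k_n\int_\om|u_n|^2=\mu_1(k_n)\leq\lambda_1(\om)$. This bounds $\|u_n\|_{H_0^1(\Om)}$ and forces $\int_\om|u_n|^2\leq \lambda_1(\om)/k_n\to 0$. By Rellich compactness I may pass to a subsequence with $u_n\rightharpoonup u_*$ weakly in $H_0^1(\Om)$ and $u_n\to u_*$ strongly in $L^2(\Om)$; then $\|u_*\|_{L^2(\Om)}=1$ and $\int_\om|u_*|^2=0$, so $u_*\equiv 0$ a.e.\ on $\om$. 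Weak lower semicontinuity of the Dirichlet energy together with the variational characterization of $\lambda_1(\om)$ gives
\[
\mu_1(\infty)=\lim_n\mu_1(k_n)\geq \liminf_n\int_\Om|\nabla u_n|^2\geq \int_\Om|\nabla u_*|^2\geq \lambda_1(\om)\|u_*\|_{L^2(\Om)}^2=\lambda_1(\om),
\]
which combined with the upper bound yields $\mu_1(k)\nearrow\lambda_1(\om)$. The one step demanding care—and the main obstacle—is the final inequality: one must know that $u_*$, an $H_0^1(\Om)$ function vanishing a.e.\ on the open set $\om$, restricts to a genuine element of $H_0^1(\Om\setminus\overline{\om})$ (equivalently, that it has zero trace along $\partial\om$), so that the Rayleigh quotient defining $\lambda_1(\om)$ applies. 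This is exactly where smoothness of $\partial\om$ enters.

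Finally, for the concluding claim that $\lambda_1(\om)\to\infty$ as $|\om|\to|\Om|$, I would use $\lambda_1(\om)=\lambda_1(\Om\setminus\overline{\om})$ together with $|\Om\setminus\overline{\om}|\to 0$ in this limit. The Faber--Krahn inequality bounds $\lambda_1(\Om\setminus\overline{\om})$ below by the first Dirichlet eigenvalue of a disc of the same area, which by scaling behaves like $c/|\Om\setminus\overline{\om}|$ and hence blows up, giving the monotone unboundedness and completing the proof.
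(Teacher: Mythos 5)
The paper offers no proof of this lemma at all: it is imported verbatim from the reference \cite{XiYo} and used as a black box in Remark \ref{rmk.da2}, so there is no internal argument to compare yours against. On its own merits, your proof is correct and complete. Reading the left side as the quadratic form of the Dirichlet realization of $-\Delta+k\chi_\om$ and showing $\mu_1(k)\nearrow\lambda_1(\Om\setminus\overline\om)$ is the standard large-coupling-limit argument for penalized Schr\"odinger operators, and each step holds up: monotonicity in $k$, the upper bound by zero-extension of the eigenfunction of $\Om\setminus\overline\om$, the penalty estimate $\int_\om|u_n|^2\le\lambda_1(\om)/k_n$, Rellich compactness, and weak lower semicontinuity of the Dirichlet energy. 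You also correctly isolate the only genuinely delicate point, namely that an $H_0^1(\Om)$ function vanishing a.e.\ on $\om$ restricts to an element of $H_0^1(\Om\setminus\overline\om)$; for the smooth (hence Lipschitz) $\partial\om$ assumed in the statement this follows from the coincidence of the interior and exterior traces on $\partial\om$, so the variational characterization of $\lambda_1(\om)$ applies to $u_*$. The Faber--Krahn scaling argument for the final claim that $\lambda_1(\om)\to\infty$ as $|\Om\setminus\overline\om|\to 0$ is likewise correct. One cosmetic remark: strict monotone increase of $\lambda_1(\om)$ as $\om$ grows would come from domain monotonicity of Dirichlet eigenvalues rather than from Faber--Krahn, but the substance of the claim --- unboundedness --- is exactly what your scaling bound $\lambda_1\gtrsim c/|\Om\setminus\overline\om|$ delivers.
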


As a final comment let us note that an observation inequality for \eqref{eq.nse} is given in \cite{IK} for the difference of two solutions. It cannot, however, be applied to the difference of the reference solution and the data assimilation solution.

\section{Solving the data assimilation equations}\label{sec.existence}

In this section we construct solutions to the data assimilation equations introduced in Section \ref{sec.intro}.  
Recall the data assimilation equation is 
\EQ{ 
\frac d {dt} \bv +\nu A \bv +  P_NB(\bv,\bv) = P_N \bbf -\mu  J_{h,N,\om}(\bv -  \bu  ),
}
where $J_{h,N,\om}$ is either $I_{h,N,\om}$ or $\mathcal I_{h,N,\om}$.
We focus on periodic boundary conditions.
Notice that the data assimilation equation has the form 
\[
\frac d {dt} \bv+\nu A \bv = F_N(\bu,\bv,\bbf),
\]
where $ F_N \in \operatorname{span}( \phi_1,\ldots,\phi_N)$. Provided the data $\bv_0$ is also in $\operatorname{span}( \phi_1,\ldots,\phi_N)$, we may seek a solution $\bv(t) \in \operatorname{span}( \phi_1,\ldots,\phi_N)$ for all $t$.  Formally taking $\bv(t) \in \operatorname{span}( \phi_1,\ldots,\phi_N)$ for all $t$ results in a finite system of ODEs which possesses a local-in-time unique strong solution.   We take $\bv$ to be this solution and let $T(\bv_0)$ be the maximal existence time for $\bv$ for data $\bv_0$.  Since the solution is strong, we have $\bv\in C([0,T(\bv_0));H)$.  Note that if $T(\bv_0)$ is maximal and finite, then the $L^2$ norm of $\bv$ must blow up at $T(\bv_0)$. Hence, a  uniform in time bound for the $L^2$ norm implies $T(\bv_0)=\I$. 
The next lemma provides such a bound and implies $\bv$ is a global solution provided $h$ is sufficiently small.

\begin{lem}\label{lem.da.exist}  Let $\bu_0\in V$ be given and let $\bu$ be the solution to \eqref{eq.nse} for data $\bu_0\in V$ and $\bbf\in L^\I(0,\I;H)$. Fix $N\in \N$, $h>0$ and $\mu>0$ and $J_{h,N,\om}\in \{I_{h,N,\om}, \mathcal I_{h,N,\om}  \}$. Assume $\bv_0\in  \operatorname{span}( \phi_1,\ldots,\phi_N)$ and let $\bv$ be the unique strong solution to \eqref{eq.da1} on $[0,T(\bv_0))$ for some $h>0$. Then, provided   $h $ is sufficiently small,  $\bv$ satisfies
\EQ{
\bv \in L^\I((0,T(\bv_0));H) \cap L^2((0,T');V),
}
for every $0<T'<T(\bv_0)$.
The first inclusion implies $T(\bv_0)=\I$.
\end{lem}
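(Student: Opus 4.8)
The plan is to derive an a priori differential inequality for $|\bv(t)|^2$ by testing the data assimilation equation \eqref{eq.da1} against $\bv$ in $L^2(\Om)$, and to show that once $h$ is small the inequality forces $\sup_{0\le t<T(\bv_0)}|\bv(t)|<\I$. Since $\bv$ is a strong solution of a finite system of ODEs, the only obstruction to global existence is blow-up of the $L^2$ norm at $T(\bv_0)$, so such a uniform bound immediately yields $T(\bv_0)=\I$. Taking the inner product of \eqref{eq.da1} with $\bv$ and using $\bv\in\operatorname{span}(\phi_1,\ldots,\phi_N)$, so that $P_N\bv=\bv$, produces three standard simplifications: the time derivative gives $\tfrac12\tfrac{d}{dt}|\bv|^2$, the Stokes term gives the dissipation $\nu|A^{1/2}\bv|^2$, and the nonlinear term vanishes, since $(P_NB(\bv,\bv),\bv)=(B(\bv,\bv),\bv)=0$ by the orthogonality property of $B$. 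The forcing contributes $(P_N\bbf,\bv)=(\bbf,\bv)$, which I would bound by Young's inequality against the dissipation, leaving a data term of size $\sim\nu^{-1}\lambda_1^{-1}\sup_t|\bbf|^2$.

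The crux is the nudging term $-\mu(J_{h,N,\om}(\bv-\bu),\bv)$, which I would split as $-\mu(J_{h,N,\om}\bv,\bv)+\mu(J_{h,N,\om}\bu,\bv)$. Moving the projection off of $\bv$ via $P_N\bv=\bv$, the self-interaction reduces to $-\mu(I_{h,\om}\bv,\bv)$ (resp.\ $-\mu(\mathcal I_{h,\om}\bv,\bv)$), and I would extract the good local dissipation by writing $(I_{h,\om}\bv,\bv)=\int_\om|\bv|^2+(I_{h,\om}\bv-\chi_\om\bv,\bv)$; the first piece is nonnegative and may be kept on the left, while the second is an interpolation error controlled by the approximation properties \eqref{localvolineq} and \eqref{localnodalineq}. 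In the volume case this gives $|(I_{h,\om}\bv-\chi_\om\bv,\bv)|\le\sqrt{c_0}\,h\,|A^{1/2}\bv|\,|\bv|$, while in the nodal case the bound involves $\|\bv\|_{H^2(\Om)}\sim|A\bv|$, which on the finite-dimensional space $\operatorname{span}(\phi_1,\ldots,\phi_N)$ is comparable to $\lambda_N^{1/2}|A^{1/2}\bv|$. The driving term $\mu(J_{h,N,\om}\bu,\bv)$ is harmless: the local interpolants are bounded on $L^2$ (by Jensen's inequality with a finite-overlap constant) and $\bu\in L^\I((0,\I);H)$ by the a priori bounds recalled earlier, so after Young's inequality it yields another finite data term.

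Assembling these estimates gives, schematically,
\[
\tfrac12\tfrac{d}{dt}|\bv|^2+\tfrac{\nu}{2}|A^{1/2}\bv|^2+\mu\int_\om|\bv|^2\le\Big(\tfrac{\mu^2 c_0 h^2}{2\nu}+\tfrac{\nu\lambda_1}{4}\Big)|\bv|^2+C(\nu,\lambda_1,\mu,\bbf,\bu),
\]
in the volume case, with the interpolation-error coefficient replaced by its nodal analogue $\sim\mu^2c_0^2h^4\lambda_N/\nu$ (or $\mu c_0 h^2\lambda_N$) in the other case. The main obstacle, and precisely where smallness of $h$ enters, is that this error coefficient must be dominated by the Poincar\'e dissipation $\tfrac{\nu\lambda_1}{2}|\bv|^2\le\tfrac{\nu}{2}|A^{1/2}\bv|^2$. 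Choosing $h_*$ so that the coefficient is strictly below $\tfrac{\nu\lambda_1}{4}$ leaves a net negative coefficient $-\delta$ on $|\bv|^2$ (note $h_*$ is independent of $N$ in the volume case but depends on $\lambda_N$, hence on $N$, in the nodal case), and Gr\"onwall's inequality then yields $|\bv(t)|^2\le|\bv_0|^2+C/\delta$ for all $t\in[0,T(\bv_0))$. This is the asserted membership $\bv\in L^\I((0,T(\bv_0));H)$, and it precludes finite-time blow-up, so $T(\bv_0)=\I$. Finally, integrating the same inequality over $(0,T')$ and using the now-established uniform $L^2$ bound on the right-hand side controls $\int_0^{T'}|A^{1/2}\bv|^2\,dt$, giving $\bv\in L^2((0,T');V)$.
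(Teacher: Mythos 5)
Your argument is essentially the paper's proof: test \eqref{eq.da1} against $\bv$, kill the nonlinearity, split the nudging term into a signed local piece $-\mu\int_\om|\bv|^2$ plus an interpolation error controlled by \eqref{localvolineq}/\eqref{localnodalineq} (with the $\lambda_N$ factor in the nodal case), and close with Gr\"onwall; the only deviation is that the paper pairs the error with $\|\bv\|_{L^2(\om)}$ and absorbs the resulting $2\mu c_0h^2\|A^{1/2}\bv\|^2$ directly into the dissipation under the condition $2\mu c_0h^2\le\nu/4$, whereas you pair it with the global $L^2$ norm and absorb via Poincar\'e, yielding a slightly more restrictive ($h^2\lesssim \nu^2\lambda_1/(\mu^2c_0)$ rather than $h^2\lesssim\nu/(\mu c_0)$) but equally adequate smallness condition. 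One small caveat: your parenthetical claim that the interpolants are ``bounded on $L^2$ by Jensen'' is valid for the volume-average interpolant but not for nodal evaluation, where bounding $\mu(\mathcal I_{h,N,\om}\bu,\bv)$ requires the $H^2$ regularity of the strong solution $\bu$ (a point the paper's proof also leaves implicit).
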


For type 1 interpolants, the requirement on $h$ in \cite{AOT} is $2\mu c_0  h^2 \leq \nu$. This is better than ours by a factor of $4$.   

Because $\bv $ is confined to $\operatorname{span}( \phi_1,\ldots,\phi_N)$, we may deduce bounds on higher order derivatives freely using properties of the eigenvectors of the Stokes operator. Hence we do not need to prove such estimates as required in \cite{AOT}.

We do not require any Gevrey regularity of $\bbf$ at this point. 

\begin{proof} We first focus on $J_{h,N,\om}=I_{h,N,\om}$. 
Take the inner product of \eqref{eq.da1} with $\bv$ and integrate in space to obtain
\[
\frac 1 2 \frac d {dt} \|\bv\|_{L^2(\Om)}^2 +\nu \|A^{1/2}\bv\|_{L^2(\Om)}^2 = (\bbf+\mu   I_{h,N,\om}\bu,\bv) - \mu (   I_{h,N,\om}\bv   ,\bv   )\;,
\]
where $(\cdot,\cdot)=(\cdot,\cdot)_{L^2(\Om)}$.
We estimate each term on the right hand side. For the source terms we have \[  |(\bbf,\bv) |\leq \frac {4 }{ \nu \lambda_1}\|\bbf\|_{L^2(\Om)}^2+\frac   {\nu} 4 \|A^{1/2}\bv\|_{L^2(\Om)}^2,\]
and
\[
|(\mu I_{h,N,\om}\bu,\bv) | \leq  \frac {4 \mu^2} {\nu \lambda_1} \| I_{h, \om}\bu\|_{L^2(\Om)}^2       +\frac  \nu 4 \|A^{1/2}\bv\|_{L^2(\Om)}^2,
\]
where we used the fact that $\bv$ is projected onto the first $N$ modes to eliminate $P_N$ in the inner product.
A direct computation confirms that 
\[
 \| I_{h, \om}\bu\|_{L^2(\Om)}^2 \leq \|\bu\|_{L^2(\Om)}^2.
\]
where we used \eqref{ineq.type1}.  Hence\[
|(\mu I_{h,N,\om}\bu,\bv) | \leq  \frac {4 \mu^2} {\nu \lambda_1} \|  \bu\|_{L^2(\Om)}^2       +\frac  \nu 4 \|A^{1/2}\bv\|_{L^2(\Om)}^2. 
\]

For the remaining term we have 
\EQ{\label{eq.6.18.20a}
- \mu (   I_{h,N,\om}\bv   ,\bv   ) = -\mu(I_{h,\om} \bv ,\bv) = -\mu(I_{h,\om} \bv- \chi_\om \bv,\bv) - \mu \int_\om |\bv|^2\,d\bx. 
}
The last term above has a good sign while, using \eqref{localvolineq},
the second to last is bounded as 
\EQ{\label{ineq.interp.bound.type1}
\mu|(I_{h,\om} \bv-  \chi_\om \bv,\bv)| & \leq \mu \|  I_{h,\om} \bv-  \chi_\om \bv\|_{L^2(\Om)}\|A^{1/2}\bv \|_{L^2(\om)} \\
 &\leq   2\mu c_0  h^2 \|A^{1/2}\bv \|_{L^2(\Om)}^2   +   \frac \mu {2} \|\bv\|_{L^2(\om)}^2.
}
We now require 
\be \label{mucond}
 2\mu c_0  h^2 \leq \frac \nu 4.
\ee
Granting this and absorbing terms where possible we obtain 
\EQ{
&\frac 1 2 \frac d {dt} \|\bv\|_{L^2(\Om)}^2 +\frac \nu 4 \|A^{1/2}\bv\|_{L^2(\Om)}^2 \leq \frac {4 }{ \nu \lambda_1}\|\bbf\|_{L^2(\Om)}^2 +   \frac {4 \mu^2} {\nu \lambda_1} \|  \bu\|_{L^2(\Om)}^2 -\frac \mu 2 \| \bv\|_{L^2(\om)}^2.
}
Using the Poincar\'e inequality and dropping the term with a good sign we have
\EQ{
&\frac 1 2 \frac d {dt} \|\bv\|_{L^2(\Om)}^2 +\frac {\nu \lambda_1} 4 \|\bv\|_{L^2(\Om)}^2 \leq \frac {4 }{ \nu \lambda_1}\|\bbf\|_{L^2(\Om)}^2 +   \frac {4 \mu^2} {\nu \lambda_1} \|  \bu\|_{L^2(\Om)}^2.
}
Since the right hand side is uniformly bounded in $t$, this leads to a uniform in time bound on $|\bv|$ in the usual way \cite{AOT}. Note that this bound is independent of $N$

The proof is the same if we replace $I_{h,N,\om}$ with $\mathcal I_{h,N,\om}$ with one modification: Instead of \eqref{ineq.interp.bound.type1} we have  
\EQ{
 \mu |(\chi_\om   \bv  -   \mathcal I_{h,\om }\bv    ,\bv   )| & \leq \mu \|    \chi_\om  \bv   -   \mathcal I_{h,\om}  \bv\|_{L^2(\Om)}\|\bv\|_{L^2(\Om)}
\\&\leq   2\mu  c_0^2 h^4  \| \bv \|_{H^2(\Om)}^2      +  \frac \mu {2}  \|\bv\|_{L^2(\om)}^2
\\&\leq 2 \mu  c_0^2 h^4    \lambda_N\|A^{1/2}\bv \|_{L^2(\Om)}^2   + \frac \mu {2} \|\bv\|_{L^2(\om)}^2.
}
After fixing $N$ and $\mu$ we therefore take $h$ small so that $2\mu c_0^2 h^4 \lambda_N <\frac \nu 4$ and proceed as in the case of $I_{h,N,\om}$.

\end{proof}
 
\begin{rmk} \label{rmk.existence.da2}
We discuss the existence problem for \eqref{eq.da2}. Because the localization in \eqref{eq.da2} does not involve a spectral projection, it is very similar to the usual existence result \cite{AOT}, using the usual Galerkin approximation procedure. Therefore we include only the  needed a priori bound and direct the reader to \cite{AOT} as well as \cite{cf,Temam} for more details. For the a priori bound,  starting with \eqref{eq.da2} we have 
\[
\frac 1 2 \frac d {dt} \|\bv\|_{L^2(\Om)}^2 +\nu \|A^{1/2}\bv\|_{L^2(\Om)}^2 =  ( \bbf ,\bv) -\mu  ( I_{h,\om}\bv,\bv) +  \mu(I_{h,\om}\bu,\bv).
\]
As in the proof of Proposition \ref{lem.da.exist}, the source terms lead to time-independent quantities on the right hand side while 
\[
-\mu  ( I_{h,\om}\bv,\bv)  = -\mu ( I_{h,\om}\bv -\chi_\om\bv,\bv) -\mu \|\bv\|_{L^2(\om)}^2.
\]
This is identical to \eqref{eq.6.18.20a}  and we can conclude following the identical argument. In particular, for $\mu$ fixed and $h$ chosen to satisfy  $8\mu c_0  h^2 \leq \nu$, we obtain a uniform bound on $\bv$ in terms of $\mu$, $\bu$ and $\bbf$. To rigorously construct a solution, we would now apply this a priori bound to a Galerkin scheme and pass to the limit using the standard compactness argument.

This existence result does not require $\om$ to occupy most of $\Om$. This constraint will be needed for data assimilation.
\end{rmk}

\section{Local data assimilation}\label{sec.DA}
%
%
%
%

\begin{proof}[Proof of Theorem \ref{thrm.main1}] Let $\epsilon>0$ be given.  Let $\bar \epsilon=\frac {\epsilon \nu \lambda_1} 8$.
Let $\bu$ and $\bv$ be as in the statement of Theorem \ref{thrm.main1}.
Note that for any $N\in \N$, $P_N\bu$ solves
\EQ{\label{eq.nse.projected}
&\frac d {dt} P_N\bu +\nu A P_N\bu + P_NB(P_N\bu , \bu)    = P_N\bbf - P_NB(Q_N\bu, \bu) ,
}
where $Q_N = I - P_N$. We will eventually specify a value for $N$.


 Let $\bw =  \bv -P_N \bu$.
Then $\bw$ is divergence free and satisfies
\EQ{\label{eq.w}
&\frac d {dt} \bw +\nu A \bw + P_N B(P_N \bu,\bw)+P_NB(\bw,\bw)+ P_NB(\bw,P_N\bu)
\\& = -\mu   I_{h,N,\om}\bw +\mu I_{h,N,\om}Q_N\bu +P_NB(Q_N\bu, \bu) +P_NB(P_N\bu,Q_N\bu) .
}
Throughout this proof, unless otherwise adorned, $\|\cdot\|=\|\cdot\|_{L^2(\Om)}$.  We have 
\EQ{\label{ineq.w.energy.1}
&\frac 1 2 \frac d{dt} \|\bw\|^2 +\nu \|A^{1/2}\bw\|^2 + (    B(\bw ,P_N\bu) ,\bw)
\\&= - \mu (    I_{h,N,\om}\bw ,\bw)+\mu( I_{h,N,\om}Q_N\bu ,\bw)
+(B(Q_N\bu, \bu) +B(P_N\bu,Q_N\bu) ,\bw),
}
where we made some obvious simplifications. As in Section \ref{sec.existence} we have 
\EQ{
- \mu (   I_{h,N,\om}\bw ,\bw) =  \mu (  \chi_\om  \bw-  I_{h,\om } ( \bw)   ,\bw)   - \mu\int_\om |\bw|^2\,d\bx,
}
and 
\EQ{\label{ineq.interp.1}
| \mu (   \chi_\om  \bw -I_{h ,\om}( \bw)   ,\bw) |&\leq  C\mu c_0  h^2 \|A^{1/2}\bw  \|_{L^2(\Om)}^2  +\frac \mu {4 } \|  \bw  \|_{L^2(\om)}^2.
}
The H\"older, Ladyzhenskaya and Young inequalities lead to 
\EQ{
|(B(\bw,P_N\bu),\bw)|\leq \frac {C} {\nu} \| A^{1/2}\bu\|^2\|\bw\|^2 +\frac \nu 4 \|A^{1/2}\bw\|^2.
} 
Applying the spectral inequality \eqref{ineq.spectral.torus.3} we obtain 
\EQ{
 \frac {C} {\nu} \| A^{1/2}\bu\| ^2\|\bw\| ^2 \leq  C \nu \lambda_1 G^2   C_{\om} e^{C_\om \sqrt{N}} \|\bw\|_{L^2(\om)} ^2. 
}
Also by standard interpolation inequalities, we have
\begin{align*}
&(B(Q_N\bu, \bu) +B(P_N\bu,Q_N\bu) ,\bw) 
\\&\leq C  (\|Q_N\bu\|^{1/2}\|A^{1/2}Q_N\bu\|^{1/2}  \|\bu\|^{1/2}\|A^{1/2}\bu\|^{1/2}   \\ 
  & \qquad + \|P_N\bu\|^{1/2}\|P_NA^{1/2}\bu\|^{1/2}   \|Q_N\bu\|^{1/2}\|Q_NA^{1/2}\bu\|^{1/2}      )  \|A^{1/2}\bw\|
\\&\leq C    \|Q_N\bu\|^{1/2}\|A^{1/2}Q_N\bu\|^{1/2}  \|\bu\|^{1/2}\|A^{1/2}\bu\|^{1/2}   \|A^{1/2}\bw\|
\\&\leq    \frac {C} {\nu^2} \|Q_N\bu\|\|A^{1/2}Q_N\bu\|   \|\bu\|\|A^{1/2}\bu\| +\frac \nu 4 \|A^{1/2}\bw\|^2\;.
\end{align*}
Note that
\EQ{
\frac {C} {\nu^2} \|Q_N\bu\|\|A^{/2}Q_N\bu\|   \|\bu\|\|A^{1/2}\bu\| &\leq \frac C {\nu^2\lambda_N^{1/2}}  \|A^{1/2} \bu\|^3 \|\bu\|
\\&\leq \frac C {\nu^2\lambda_N^{1/2}}( 2\nu^2\lambda_1 G^2 )^{3/2} (2\nu^2 G^2)^{1/2}
\\&\leq \frac C {\lambda_N^{1/2}} \nu^2 \lambda_1^{3/2}G^{4},
}
which can be made small by taking $N$ large.
Finally we have 
\EQ{
\mu|( I_{h,N,\om}Q_N\bu ,\bw)| &\leq  \mu \|I_{h,\om}Q_N\bu\|  \|\chi_\om\bw\|\\
&\leq  \mu \|\chi_\om Q_N\bu\| \|\bw \chi_\om\| \leq C \mu \|\chi_\om Q_N\bu\|^2 + \frac \mu 4 \| \bw\|_{L^2(\om)}^2.
}
By our assumption on  uniform Gevrey bounds for $\bu$ we have 
\[
  \mu \|\chi_\om Q_N\bu\|^2 \leq  C \mu \sum_{\sqrt N\lesssim |\bk|} |\hat \bu_\bk|^2\leq 
  C\mu \sum_{\sqrt N\lesssim |\bk|}
\frac {M^2} { \sqrt{2\pi |\Om|}} \bigg|\frac {L} \bk\bigg|^2 e^{-4\pi \sigma\big|\frac {\bk} L\big|},
\]
and this bound holds uniformly in time for sufficiently large times. We will return to this term later after specifying a connection between $\mu$ and $N$.

Collecting the above estimates and dropping terms where appropriate we obtain 
\EQ{\label{ineq.w.energy.2}
&\frac 1 2 \frac d{dt} \|\bw\|^2 +\frac \nu 2 \|A^{1/2}\bw\|^2  +\frac \mu 2 \int_\om |\bw|^2\,d\bx
\leq     C\nu \lambda_1 G^2 C_\om e^{C_\om \sqrt N} \|\bw\|_{L^2(\om)}^2 \\ &+ C \mu c_0 h^2 \|A^{1/2}\bw\|^2
+ C \mu \sum_{\sqrt N\lesssim |\bk|}
\frac {M^2} { \sqrt{2\pi |\Om|}} \bigg|\frac {L} \bk\bigg|^2 e^{-2\pi \sigma\big|\frac {\bk} L\big|} + \frac C {\lambda_N^{1/2}} \nu^2 \lambda_1^{3/2} G^4.
}

Provided $N$ is sufficiently large we have
\EQ{\label{cond.N.a}
 \frac C {\lambda_N^{1/2}} \nu^2 \lambda_1^{3/2} G^4 \leq \frac  {\bar \epsilon}2.
}
Assuming this holds, let
\[
\mu  = 2C\nu \lambda_1 G^2 C_\om e^{C_\om \sqrt N}\text{ and }h_*= \sqrt{ \frac \nu {4C \mu c_0}}
\]
and take $h\leq h_*$. Then \eqref{ineq.w.energy.2} simplifies to
\EQ{\label{ineq.w.energy.3}
\frac 1 2 \frac d{dt} \|\bw\|^2 +\frac \nu 4 \|A^{1/2}\bw\|^2   &\leq     
 C \mu \sum_{\sqrt N\lesssim |\bk|}
\frac {M^2} { \sqrt{2\pi |\Om|}} \bigg|\frac {L} \bk\bigg|^2 e^{-2\pi \sigma\big|\frac {\bk} L\big|} + \frac {\bar \epsilon} 2.
}
Using the definition of $\mu$ in terms of $N$ we obtain 
\EQ{
 C \mu \sum_{\sqrt N\lesssim |\bk|}
\frac {M^2} { \sqrt{2\pi |\Om|}} \bigg|\frac {L} \bk\bigg|^2 e^{-2\pi \sigma\big|\frac {\bk} L\big|}  &=  C\nu \lambda_1 G^2 C_\om  \sum_{\sqrt N\lesssim |\bk|}
\frac {M^2} { \sqrt{2\pi |\Om|}} \bigg|\frac {L} \bk\bigg|^2 e^{C_\om \sqrt N-2\pi \sigma\big|\frac {\bk} L\big|} 
 \\&\leq \frac {C\nu \lambda_1 G^2 C_\om M^2} {N} \sum_{\sqrt N\lesssim |\bk|}
 e^{C_\om \sqrt N-2\pi \sigma\big|\frac {\bk} L\big|},
}
where we have hidden some global parameters. Take $\delta_0$  large enough so that 
\[ \sum_{\sqrt N\lesssim |\bk|}
 e^{C_\om \sqrt N-2\pi \sigma\big|\frac {\bk} L\big|}\leq 1.\]
This is achieved if
\[
C_\om \lesssim \sigma.
\]
In addition to \eqref{cond.N.a}, we require that
\[
N\geq  \frac {2C\nu \lambda_1 G^2 C_\om M^2} {\bar \epsilon}.
\]
This leads to 
\EQ{\label{ineq.w.energy.4}
\frac 1 2 \frac d{dt} \|\bw\|^2 +\frac \nu 4 \|A^{1/2}\bw\|^2   &\leq   \bar \epsilon.
}
The Poincaré\'e inequality implies
\EQ{\label{ineq.w.energy.5}
&\frac d{dt} \|\bw\|^2 + \frac \nu {2}\lambda_1\|\bw\|^2   \leq    2 \bar \epsilon.
} 
By the Gronwall inequality we obtain
\[
\|\bw(t)\|^2\leq \|\bu_0\|^2 e^{-\nu \lambda_1 t /2} + \frac  {4 \bar \epsilon}  {\nu\lambda_1} (1 - e^{-\nu \lambda_1 t /2}) \leq \|\bu_0\|^2 e^{-\nu \lambda_1 t/2} +\frac {  \epsilon} 2,
\]
where we used the definition of $\bar \epsilon$ from the beginning of the proof. To complete the proof note that 
\[
\|   \bu(t)-\bv(t)\|^2 \leq \|\bw(t)\|^2 + \|Q_N\bu(t)\|^2\leq  C \|\bu_0\|^2 e^{-\nu \lambda_1 t/2} +  \frac 3 4  \epsilon,
\]
provided we take $N$ large enough so that
\[\|Q_{N}\bu\|^2\leq \frac\epsilon 4.\]
This plainly implies
\[
\|   \bu(t)-\bv(t)\| <\epsilon,
\]
for $t$ sufficiently large.

The proof for $\mathcal I_{h,N,\om}$ is similar but we need to modify our treatment of \eqref{ineq.interp.1} as we did at the end of the proof of Lemma \ref{lem.da.exist}.
\end{proof}

\begin{rmk}\label{rmk.bounded.domain}
Essentially the  same proof goes through for bounded domains if we use \eqref{ineq.spectral} and assume that  $\bu\cdot \phi_N$ decay sufficiently fast in $N$. The other modifications are standard \cite{AOT}.
\end{rmk}

\begin{proof}[Proof of Corollary \ref{thrm.exact.convergence.1}] 
Let $\bv$  be as in Theorem \ref{thrm.main1}.  Inspecting the proof of Theorem \ref{thrm.main1} we see that  
$$
\|\bv\|\leq \|\bv-P_N\bu\|+\|P_N\bu\|<  \|\bu_0\|^2 e^{-\nu \lambda_1 t/4} +\epsilon +\|\bu\|\;.
$$ 
This is an upper bound for $\bv$ that is   time-global and  independent of $\mu$ and $N$. For the same reason we get control of $\int_0^T \int_\Om |\bv|^2\,d\bx\,dt$ for finite $T$.  In contrast, the corresponding upper bounds obtained in Lemma \ref{lem.da.exist} depended on $\mu$ and $N$. This implies for any $0<\epsilon\ll 1$ we can construct a solution $\bv_\epsilon$ for parameters $N_\epsilon$, $\mu_\epsilon$ and $h_\epsilon$ to \eqref{eq.da1} with the usual energy class bounds holding independently of $N_\epsilon$ and $\mu_\epsilon$, provided we have knowledge of $\bu$ at all points in $\om$.   As $\epsilon\to 0$, we have $N_\epsilon,\mu_\epsilon \to \I$ while $h_\epsilon\to 0$.  By Banach-Alaoglu, we have that there exists $\bvv$ so that $\bv_\epsilon\to \bvv$ in the weak-star topology on $L^\I( [0,T];L^2)$ for every $T>0$ as well as the weak topology on $L^2([0,T];H^1)$. 

%
%

Fix a measurable set $U$. Let $\Delta>0$ be a given time scale. Then for any $t$,
\[
\int_{t}^{t+\Delta}\int_U (\bu-\bvv) \,d\bx\,ds =   \int_{t}^{t+\Delta } \int_{U} (\bu-\bv_\epsilon) \,d\bx \,ds +\int_{t}^{t+\Delta } \int_{U} (\bv_\epsilon-\bvv) \,d\bx \,ds.
\]
We have by Theorem \ref{thrm.main1} that
\EQ{
 \int_{t}^{t+\Delta} \int_{U} (\bu-\bv_\epsilon)\,d\bx \,ds  &\leq |U|^{1/2}\bigg( \sup_{s\in [t,t+\Delta ]} |\bu-\bv_\epsilon|^2( s) \bigg)^{1/2}
\\&\leq |U|^{1/2}\bigg( \sup_{s\in [t,t+\Delta ]} |\bu_0|^2 e^{-\nu \lambda_1 t/4} +\frac 3 4 \epsilon \bigg)^{1/2}.
}
Additionally we know that 
\[
\bigg|\int_{t}^{t+\Delta } \int_{U} (\bv_\epsilon-\bvv)(\bx,t) \,d\bx \,ds \bigg|\to 0,
\]
by $*$-weak convergence in $L^\I L^2$. Hence we may choose $\epsilon$ so that $\epsilon < e^{-t}$ and the above quantity  is smaller than $e^{-t}$. This gives the advertised exponentially decaying bound.

For the second item in Corollary \ref{thrm.exact.convergence.1}. By the Lebesgue differentiation theorem, for almost every $t$ we have 
\[
\lim_{\Delta t \to 0} \frac 1 {\Delta t} \int_{t}^{t+\Delta t} \int_{U} (\bu-\bvv) (\bx,t)\,d\bx\,ds =  \int_{U} (\bu-\bvv)(\bx,t) \,d\bx,
\]
where $\Delta t$ is a time-scale that depends on $t$.  
Let $S$ denote the set of times for which this holds. Then $|S^c|=0$ where $|\cdot |$ denotes Lebesgue measure on the line.
Fix $t>0$. Then for $\Delta t$ sufficiently small we have 
\EQ{
 &\int_{U} (\bu-\bvv)(\bx,t) \,d\bx 
\\&\leq  \frac 1 {\Delta t} \int_{t}^{t+\Delta t} \int_{U} (\bu-\bvv)(\bx,t) \,d\bx \,ds  +  e^{-t}
\\&\leq   \frac 1 {\Delta t} \int_{t}^{t+\Delta t} \int_{U} (\bu-\bv_\epsilon)(\bx,t) \,d\bx \,ds  + \frac 1 {\Delta t} \int_{t}^{t+\Delta t} \int_{U} (\bv_\epsilon-\bvv)(\bx,t) \,d\bx \,ds + e^{-t}.
}
We have already explained how the first two terms can be made exponentially small.
Since this holds for all $t\in S$ we see that 
\[
\chi_S(t) \int_{U} (\bu-\bvv)(\bx,t) \,d\bx\to 0,
\]
at an exponential rate. 

\end{proof}

\begin{rmk}
The precise dynamics of $\bvv$ are unclear  because we have not obtained a governing system for $\bvv$ via the  limiting process. The challenge to doing so is that, as $\mu_\epsilon\to \I$, so does $\mu_\epsilon I_{h_\epsilon,N_\epsilon,\gamma} (\bu)$.  To make sense of the equations after taking limits, would require $\mu_\epsilon P_{N_\epsilon}I_{h_\epsilon}(\bu-\bv^\epsilon)$ is bounded in some sense. Because $\bv^\epsilon \to \bvv$, we would need $\bu = \bvv$ for such a bound. In this case, $\bvv$ recovers the flow exactly for all times, not just as $t\to \I$.  Even granting this, the rate of convergence of $\bv^\epsilon \to \bvv$ would need to be rapid enough to compensate for the exponential growth of $\mu_\epsilon$.
\end{rmk}
   
\begin{rmk}
\label{rmk.da2} We now sketch the proof of Theorem \ref{thrm.main2}. Granted existence, the proof follows the argument of Azouani-Olson-Titi in  \cite{AOT} except we do not have a positive global term originating from the interpolant. Instead we have 
\EQ{\label{6.22.20.a}
\mu \int_\om |\bw|^2 \,d\bx. 
}
This and diffusion are used to hide 
\[
\frac 1 {2\nu} \|\bu\|^2|\bw|^2.
\]
Indeed, by the spectral inequality \eqref{ineq.spectral.bounded} from \cite{XiYo} we have 
\[
\frac 1 {2\nu} \|\bu\|^2|\bw|^2\leq c\nu \lambda_1 G^2\bigg( \frac k {\lambda_1(\Om\setminus \om)} \|\bw\|_{L^2(\om)}^2  + \frac 1 {\lambda_1(\Om\setminus \om)} \|\bw\|^2   \bigg),
\]
where $\lambda_1(\Om\setminus\om)$ is the Poincar\'e constant for the domain $\Om\setminus \om$.  We require $\Om\setminus \om$ to be thin enough that 
\EQ{\label{6.22.20.b}
 \frac {c\lambda_1 G^2} {\lambda_1(\Om\setminus \om)} \sim 1,
}
as this will allow us to hide the $H^1$ term in the diffusion. We then choose $\mu$ large so that the local quantity is absorbed by \eqref{6.22.20.a}. Plainly this will allow us to execute the remainder of the argument from \cite{AOT}.

We now analyze our choice of parameters. 
Let $h_0$ be  the thickness of $\Om\setminus \om$. Then $h_0$ is roughly $\lambda_1(\Om\setminus \om)^{-1/2}$. Putting this in \eqref{6.22.20.b} gives
\[ 
h_0 \sim  \frac 1 {\sqrt \lambda_1 G}.
\]
This is on the order of the length scale of the global grid in \cite{AOT}. Hence, Theorem \ref{thrm.main2} says that it is possible to ignore roughly the outer band of observables in the volume-elements global data assimilation algorithm and still exactly recover the solution.

\end{rmk}

 \section{Computational results} \label{sec.comp}

Our computations were done on the NSE in vorticity form with a fully dealiased pseudospectral code corresponding to $N\times N$ nodal values in the physical space $\Om=[0,2\pi]^2$.  The force $\bbf$, specified in Fourier space, was the same as that used in \cite{OT1,OT2}, time independent and concentrated on the annulus with wave numbers  $10\le |\bk| \le 12$.  The reference solution was evolved from a zero initial value  for 25,000 time units at which point the energy, enstrophy and palinstrophy have all settled into time series for a chaotic solution (see Figure \ref{chaosfig}) with steady statistics.  The viscosity was set to $\nu=10^{-4}$, and a scalar multiple on the force is chosen so that the Grashof number $G=10^6$.  
Both the vorticity of the reference solution $\omega_N=\nabla \times\bu_N$ and that of the synchronizing solution $\tilde\omega_N=\nabla \times\bv$ were solved  using the third-order Adams-Bashforth method in \cite{OT1,OT2} in which the linear term is handled exactly through an integrating factor.  The step size was $\Delta t=0.01$ with $N=512$, consistent with \cite{OT2} at this Grashof number.   We took data on square subdomains $\om=\om_j$, $j=1,2,3,4$, $\om_{j} \subset \om_{j-1}$, each centered in $\Om$ and with relative size 
$$
|\Omega_1|=0.7656|\Om|\;, \quad |\Omega_2|=0.6602  |\Om|\;,  \quad
 |\Omega_3|=0.5265 |\Om|\;, \quad |\Omega_4|=0.2500|\Om| \;.
$$
\begin{figure}[ht]
\psfrag{t}{\tiny$t$}
\psfrag{enstrophy}{\tiny enstrophy}
 \centerline{\includegraphics[scale=.35, angle=-90 ]{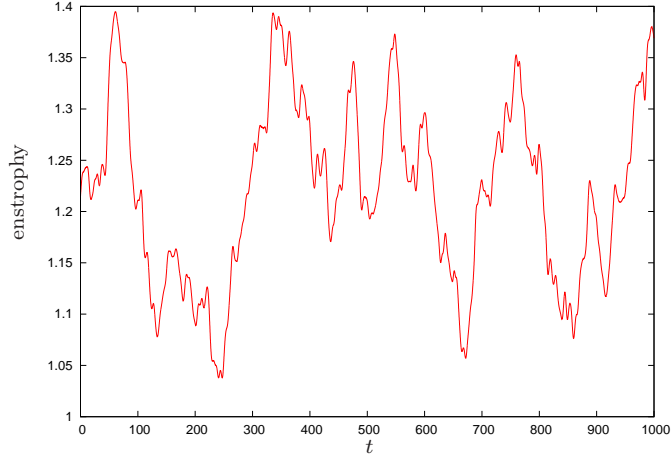}  } \vskip .25 truein
\caption{Time series of enstrophy, $\|\omega_N\|^2_{L^2(\Om)}$, indicating chaos.}
\label{chaosfig}
\end{figure}

 An interpolating operator $J$ was computed  by first applying an FFT$^{-1}$ to the Fourier coefficients of $\tilde\omega_N-\omega_N$.  In order to use coarse data, the resulting difference was used at only every  $2^p$-th node in each direction, with results compared for $p=1,2,3,4$, so that $h=\pi/128, \pi/64, \pi/32$ and $\pi/16$, respectively.
Before transforming back via an FFT, the field within the subdomain $\om$ was smoothened by the recursive averaging operator $\interp_p$ depicted in Figure \ref{smoothfig} and set to zero on $\Om\setminus \om$ so that
$$
J(\tilde\omega_N-\omega_N)=\text{FFT} \circ \chi_\om\circ\interp_p\circ\text{FFT}^{-1}(\tilde\omega_N-\omega_N)\;.
$$
Note that the final transformation by the FFT serves to filter, just as $P_N$ did in our analysis sections, though $N$ is no longer the number of Fourier modes.
After some experimentation we found that taking the relaxation parameter $\mu=50$ to be near optimal under these conditions. 
\begin{figure}[ht]
\psfrag{a}{\tiny$a$}
\psfrag{b}{\tiny$b$}
\psfrag{c}{\tiny$c$}
\psfrag{d}{\tiny$d$}
\psfrag{ab}{\tiny$\frac{a+b}{2}$}
\psfrag{bc}{\tiny$\frac{b+c}{2}$}
\psfrag{cd}{\tiny$\frac{c+d}{2}$}
\psfrag{ad}{\tiny$\frac{a+d}{2}$}
\psfrag{abcd}{\tiny$\frac{a+b+c+d}{4}$}
 \centerline{\includegraphics[scale=.5 ]{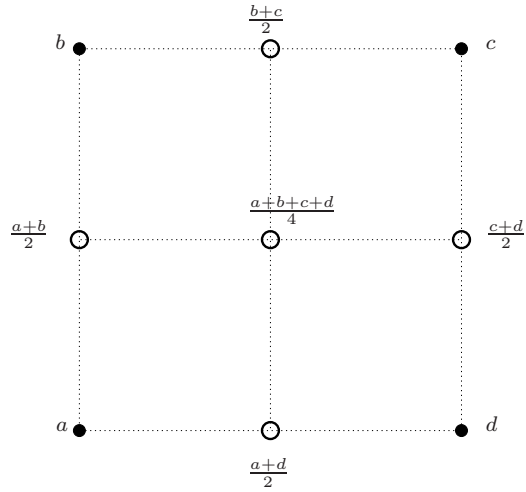}}
\caption{First recursive step of $\interp_p$.  Values of $\tilde\omega_N-\omega_N$ are $a$, $b$, $c$, $d$ at the corners.}
\label{smoothfig}
\end{figure}

We begin by testing the effect of the size of the subdomain.
 The relative $L^2$ and $L^\infty$ norms are compared in Figure \ref{L2fig} with the resolution of data fixed $h=\pi/32$ ($p=3$). To be clear,
these errors are measured as 
$$
\frac{\|\tilde\omega_N-\omega_N\|_{L^2(\Om)}} {\|\omega_N\|_{L^2(\Om)}}, \qquad
\frac{\max_{0\le j, k \le N-1}  |(\tilde\omega_N-\omega_N) (x_j,y_k)|}
{\max_{0\le m, n \le N-1} |\omega_N (x_m,y_n)|}\;,
$$
respectively.  Machine precision is reached for data collected over $\om_1$ in 1000 time units. By then, in the case of $\om_2$, the error is within $10^6$, while for $\om_3$, it has barely budged from unity.

We next vary the resolution of the observed data for two subdomains, $\om_2$ and $\om_3$ in Figure \ref{L2k1234}.
Over both subdomains there is little difference between the relative $L^2$ errors for $p=1,2,3$.  The resolution associated with $p=4$ ($h=\pi/16$) appears to be too coarse for nudging over $\om_2$, when measured in this sense.  Likewise, finer resolution in the case of $\om_3$ does not indicate convergence to the reference solution, at least by 1000 time units. While the relative $L^2$ error for $p=4$ suggests little resemblance between $\tilde\omega_N$ and $\omega_N$, particularly in the case of $\om_3$, we see from the vorticity field plots in Figure \ref{om3fields} that the main spatial features over the full domain $\Om$ are nevertheless captured. 
\begin{figure}[ht]
\psfrag{t}{\tiny$t$}
\psfrag{Omega}{\tiny$\Omega$}
\psfrag{omega}{\tiny$\omega$}
\psfrag{omegam}{\tiny$\omega^-$}
\psfrag{omegap}{\tiny$\omega^+$}
\psfrag{'L2if64r' using 1:2}{\qquad \tiny$\Omega_3$}
\psfrag{'L2if48r' using 1:2}{\qquad\tiny$\Omega_2$}
\psfrag{'L2if32r' using 1:2}{\qquad\tiny$\Omega_1$}
\psfrag{'L2if64' using 1:3}{\qquad \tiny$\Omega_3$}
\psfrag{'L2if48' using 1:3}{\qquad\tiny$\Omega_2$}
\psfrag{'L2if32' using 1:3}{\qquad\tiny$\Omega_1$}
\psfrag{G=1m, mu=50, N=512, k=3}{}
\psfrag{L^2 rel. error}{\tiny $L^2$ rel. error}
\psfrag{L^inf rel. error}{\tiny $L^\infty$ rel. error}
\psfrag{Linf}{\tiny $L^\infty$ rel. error}
  \centerline{\includegraphics[scale=.35, angle=-90]{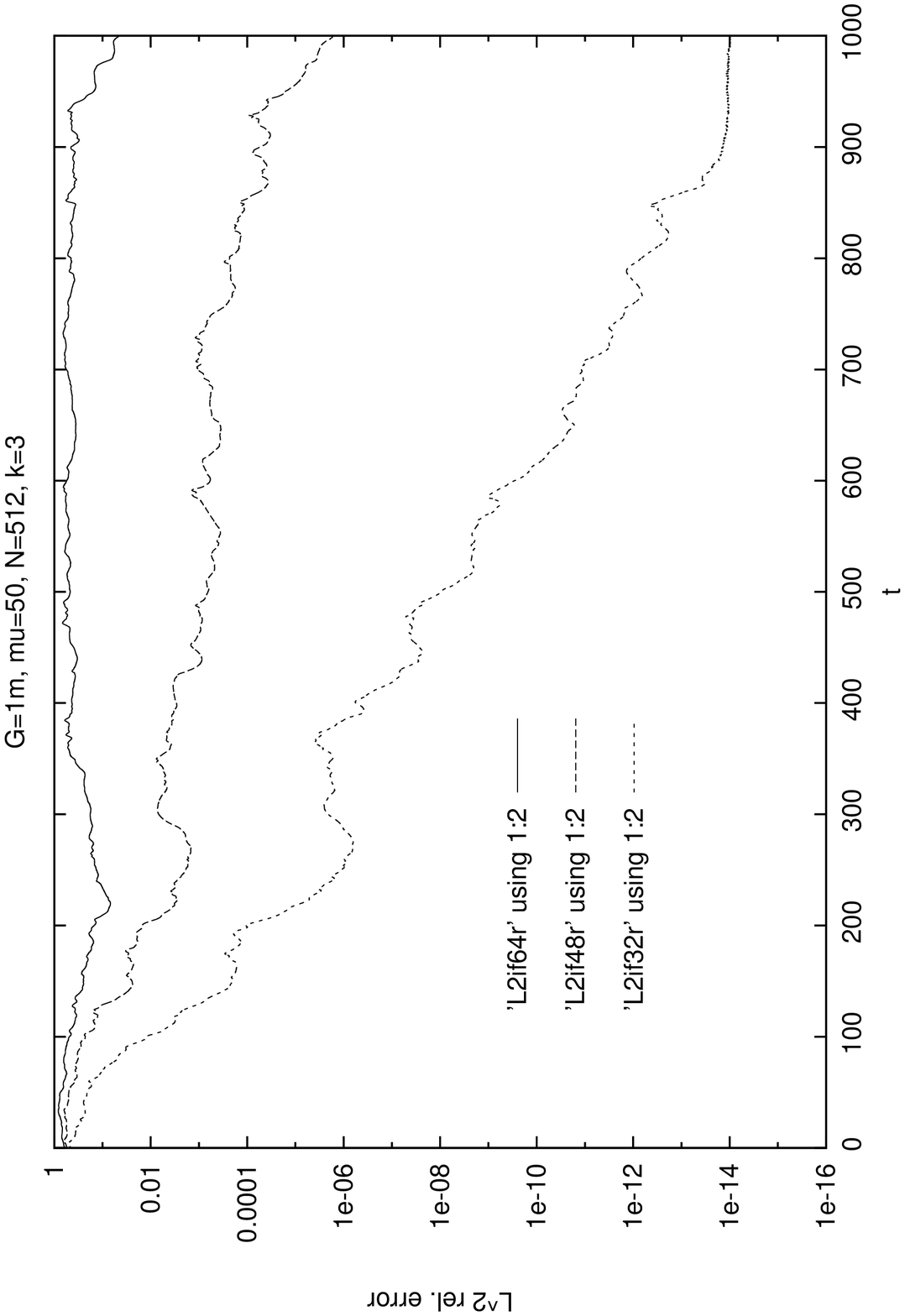} \qquad \includegraphics[scale=.35, angle=-90]{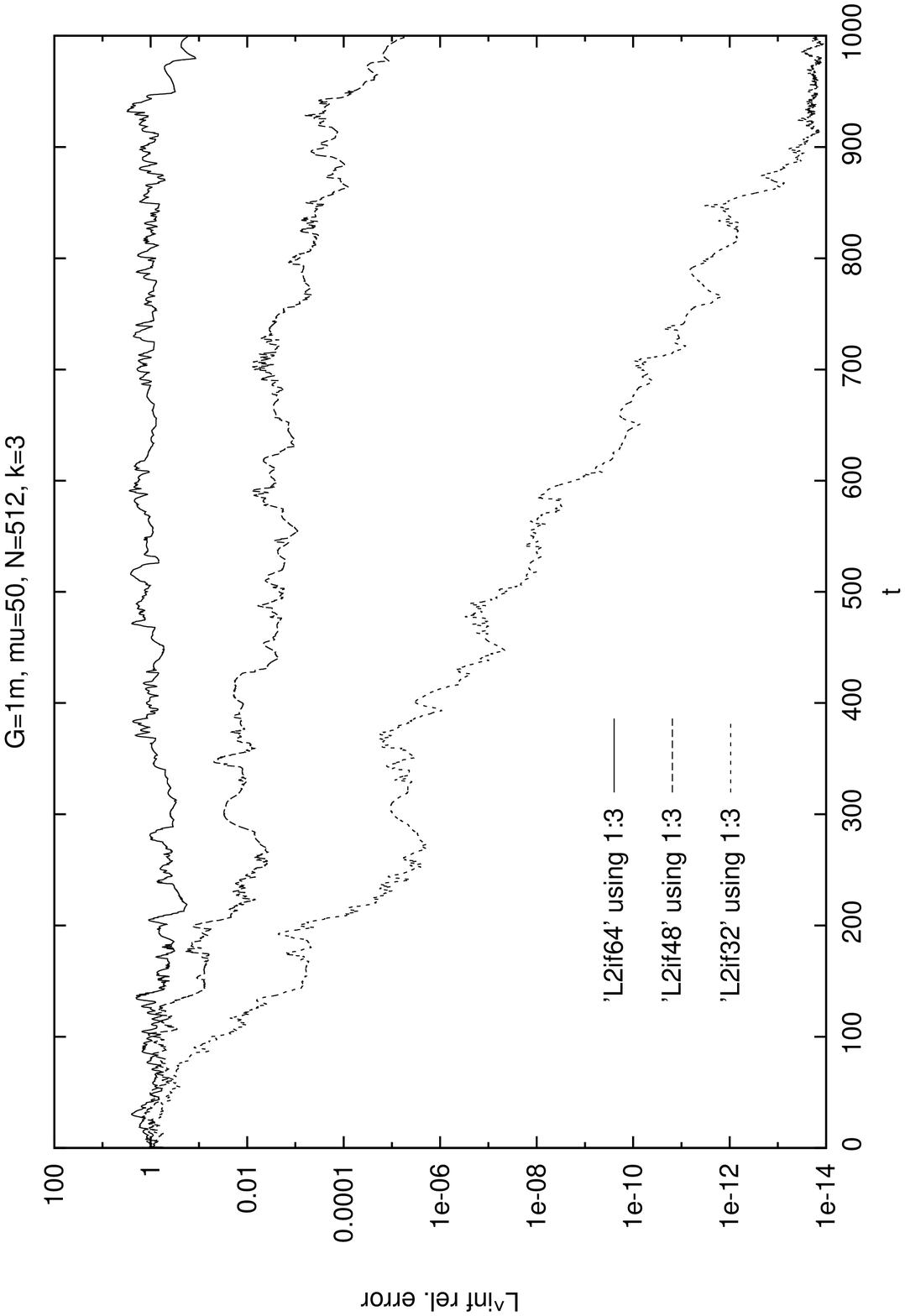} }
\vskip .15 truein
\caption{Relative $L^2$ and $L^\infty$ error for $\om=\om_1,\om_2, \om_3$, $p=3$ ($h=\pi/32$).}
\label{L2fig}
\end{figure}

\begin{figure}[ht]
\psfrag{t}{\tiny$t$}
\psfrag{Omega}{\tiny$\Omega$}
\psfrag{omega}{\tiny$\omega$}
\psfrag{48k4abcd}{\tiny$p=4$}
\psfrag{48k3abcd}{\tiny$p=3$}
\psfrag{48k2abcd}{\tiny$p=2$}
\psfrag{48k1abcd}{\tiny$p=1$}
\psfrag{64k4abcd}{\tiny$p=4$}
\psfrag{64k3abcd}{\tiny$p=3$}
\psfrag{64k2abcd}{\tiny$p=2$}
\psfrag{64k1abcd}{\tiny$p=1$}
\psfrag{G=1m, mu=50, N=512, k=3}{}
\psfrag{G=1m, mu=50, N=512, k=4}{}
\psfrag{L^2 rel. error}{\tiny $L^2$ rel. error}
\psfrag{L^inf rel. error}{\tiny $L^\infty$ rel. error}
\psfrag{Linf}{\tiny $L^\infty$ rel. error}
  \centerline{\includegraphics[scale=.35, angle=-90]{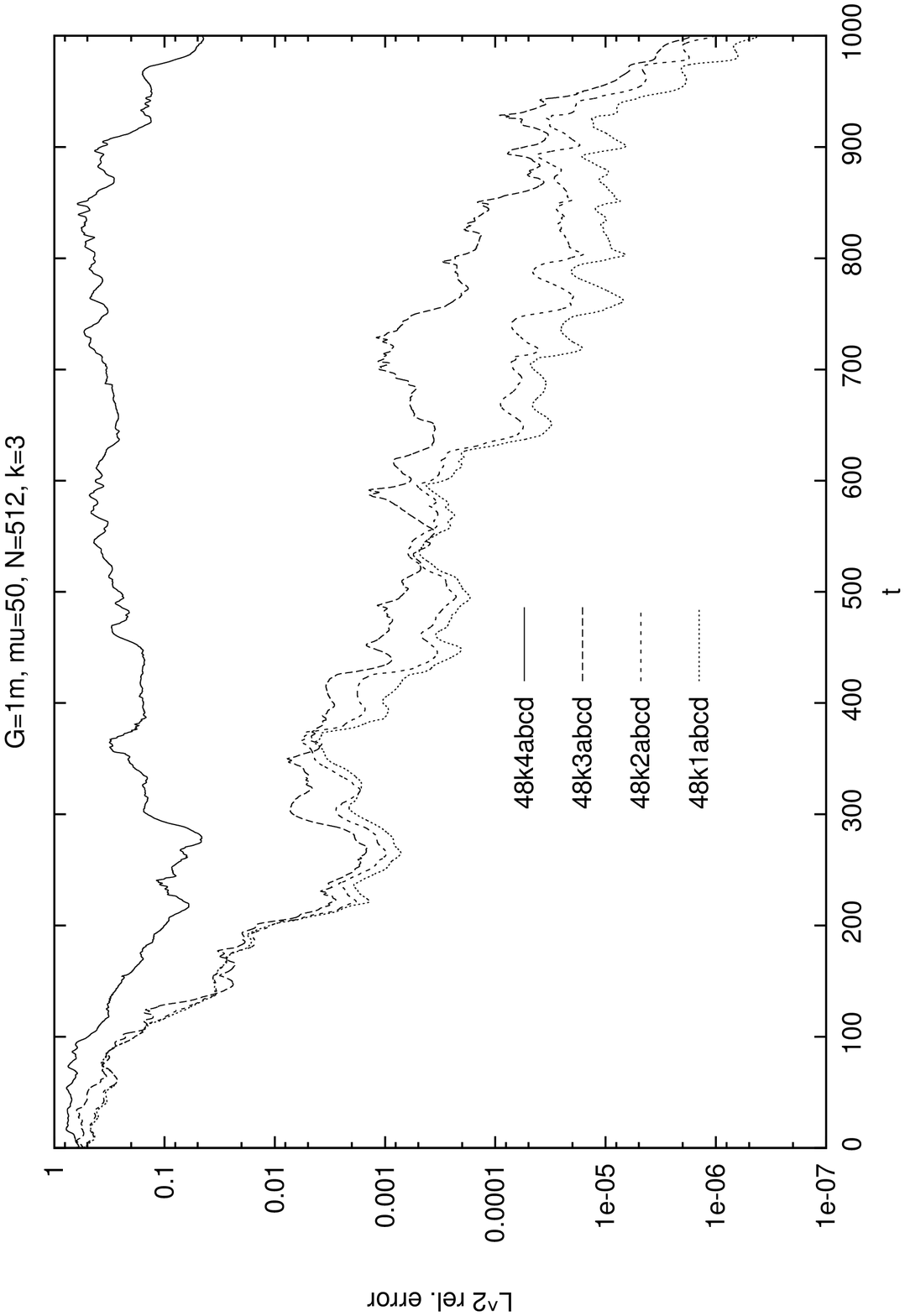} \qquad \includegraphics[scale=.35, angle=-90]{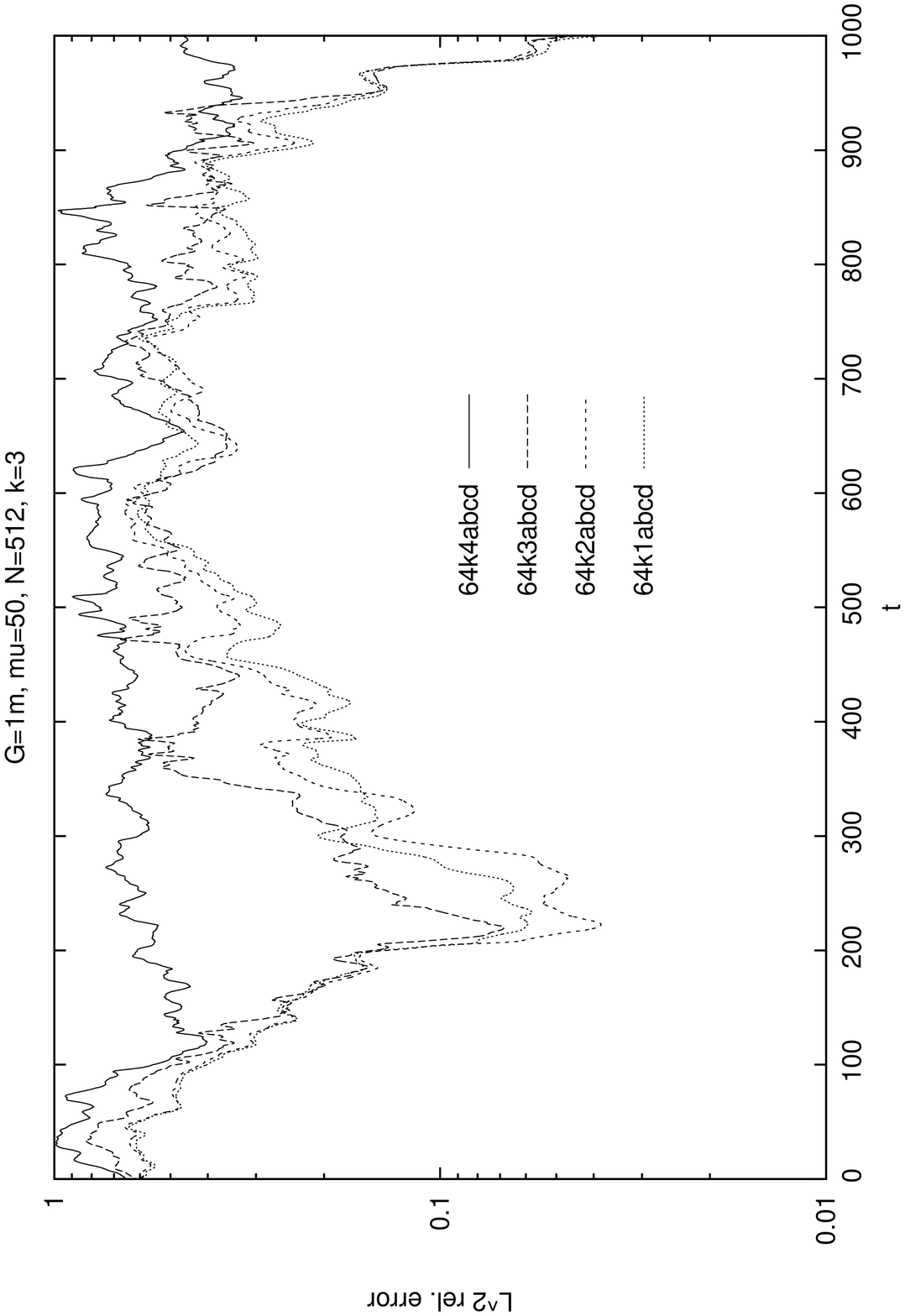} }
\vskip .15 truein
\caption{Relative $L^2$ error for $p=1,2,3,4$. Left: $\om=\Omega_2$, right: $\om=\Omega_3$.}
\label{L2k1234}
\end{figure}

\begin{figure}[ht]
\psfrag{x}{$x$}
\psfrag{y}{$y$}
  \vskip .5truein  
  \centerline{\includegraphics[scale=.6]{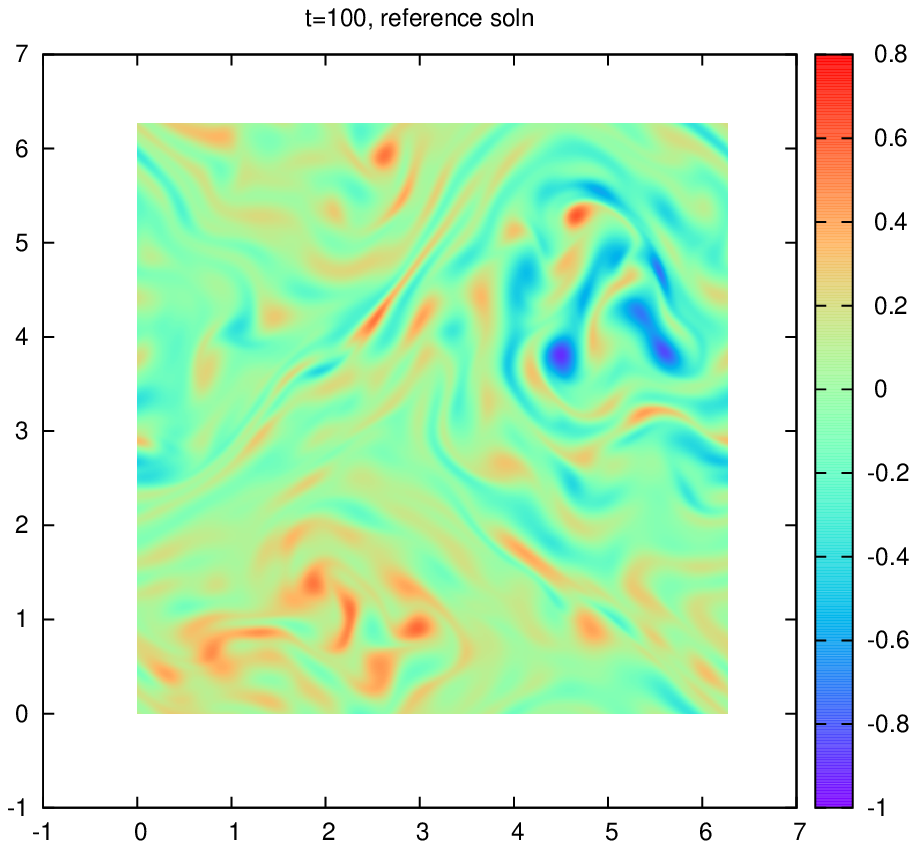} \quad\includegraphics[scale=.6]{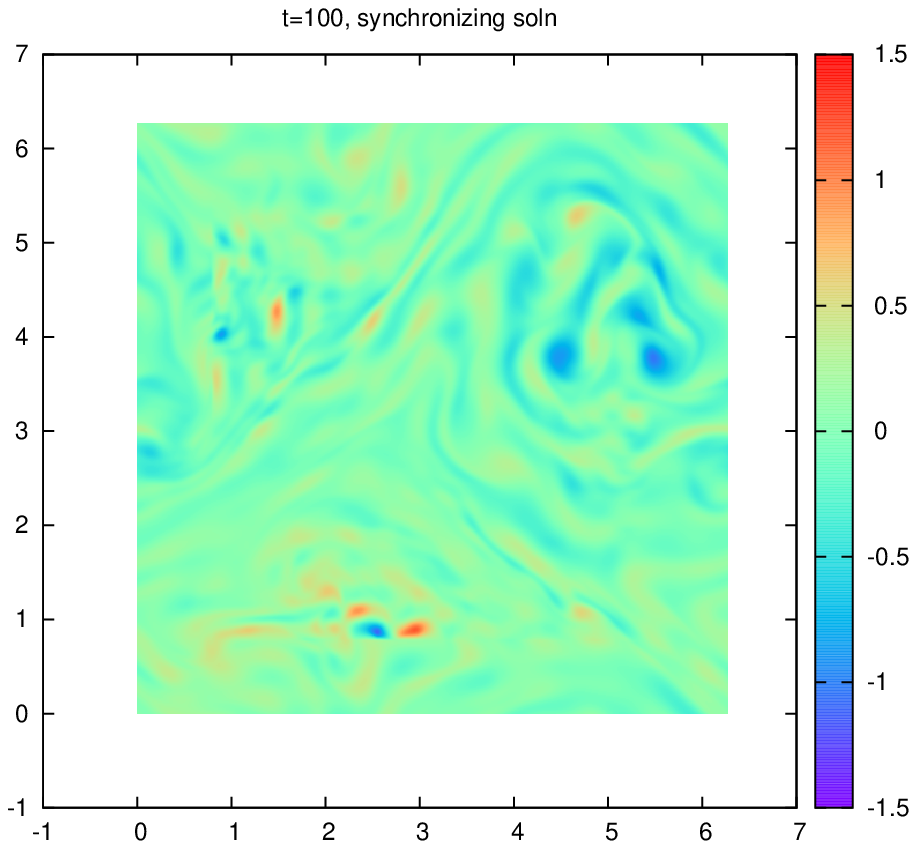} \quad  \includegraphics[scale=.6]{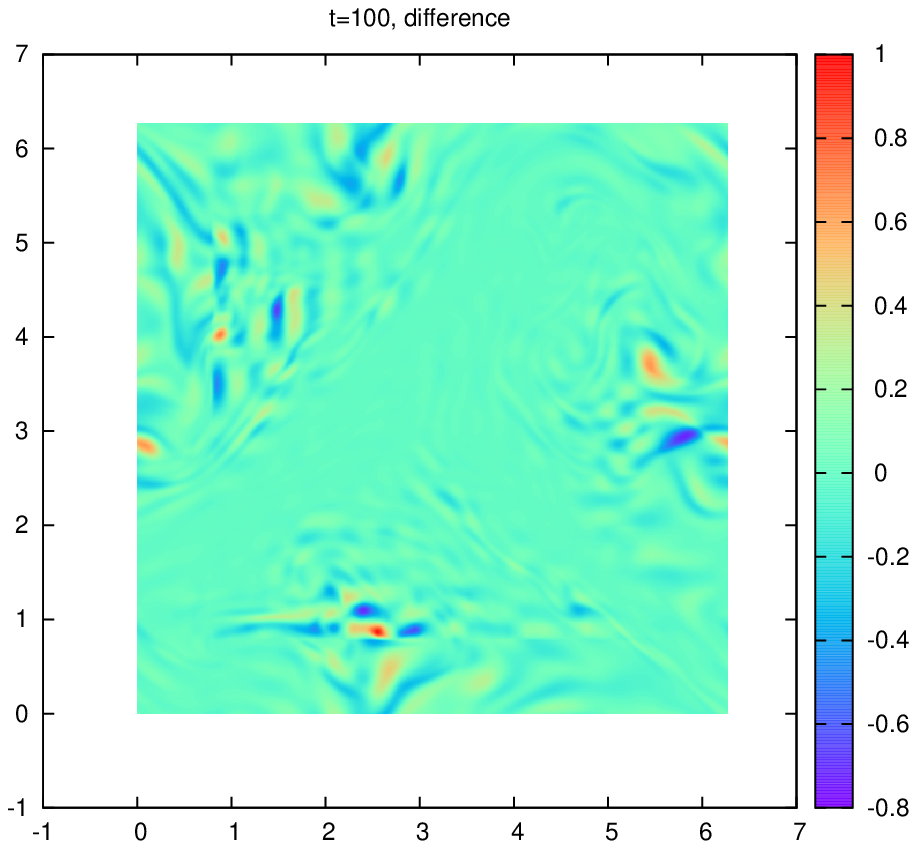}}
  \vskip .5truein  
  \centerline{\includegraphics[scale=.6 ]{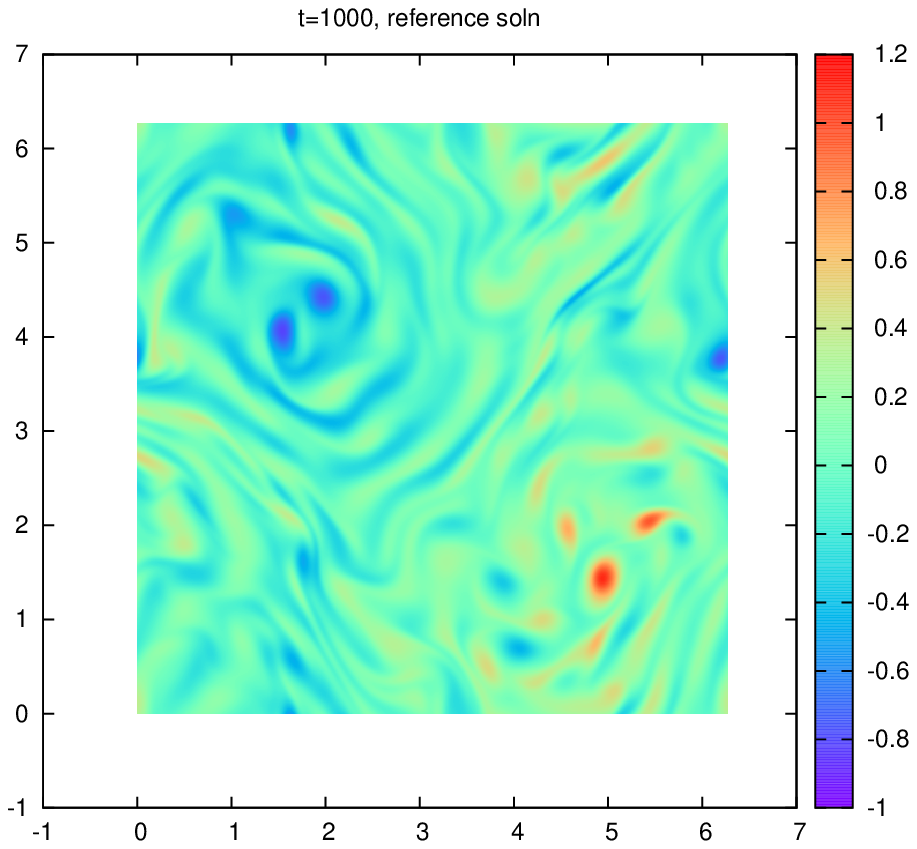} \quad \includegraphics[scale=.6]{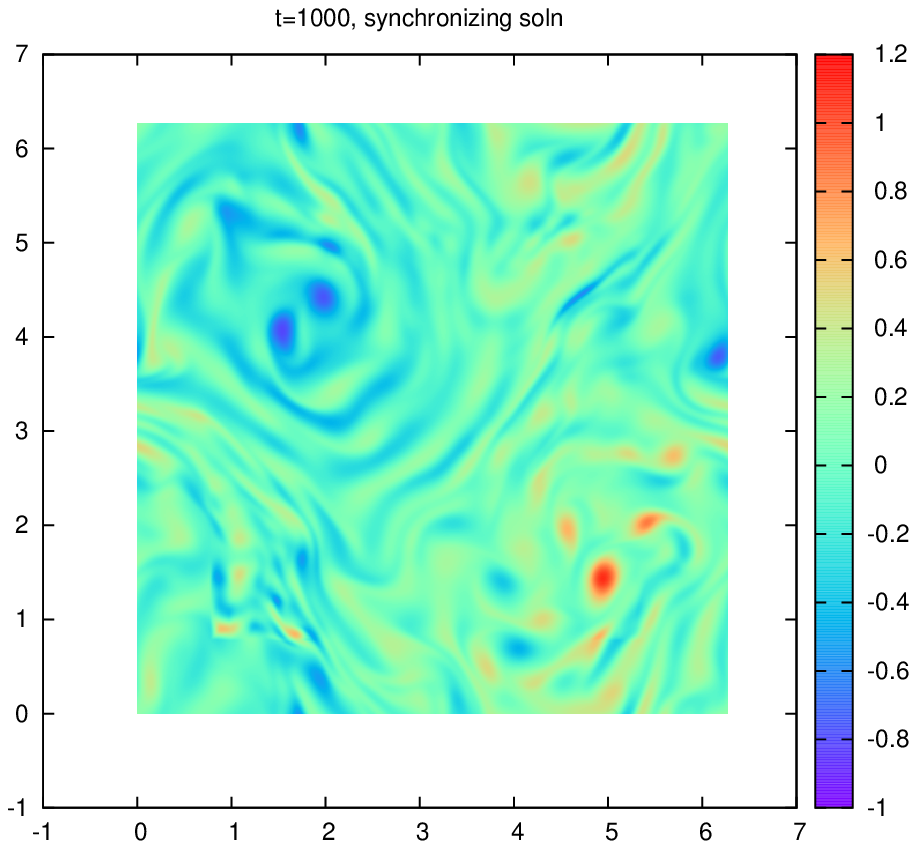}  \quad \includegraphics[scale=.6]{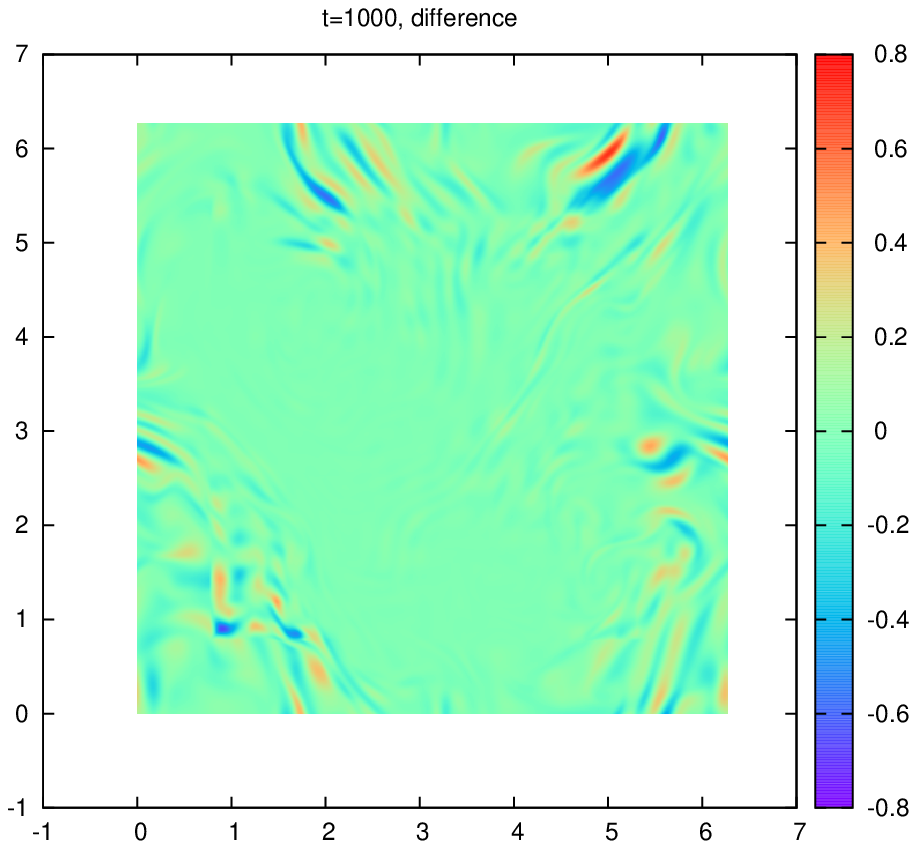}}
\caption{Snapshots of $\omega_N$, $\tilde\omega_N$ and difference, for $\om_3$, $p=4$. Top: $t=100$, bottom: $t=1000$. }
\label{om3fields}
\end{figure}

\begin{figure}[ht]
\psfrag{t}{\tiny$t$}
\psfrag{Omega}{\tiny$\Omega$}
\psfrag{omega}{\tiny$\omega$}
\psfrag{omegam}{\tiny$\omega^-$}
\psfrag{omegap}{\tiny$\omega^+$}
\psfrag{L^2 rel. error}{\tiny rel. error}
\psfrag{'L2Om0abcd}{\tiny$L^2(\Omega_0) $}
\psfrag{'L2Om3abcd}{\tiny$L^2(\Omega_3)$}
\psfrag{'L2Om1abcd}{\tiny$L^2(\Omega_4)$}
\psfrag{Linf}{\tiny $L^\infty$ rel. error}
  \centerline{\includegraphics[scale=.35, angle=-90]{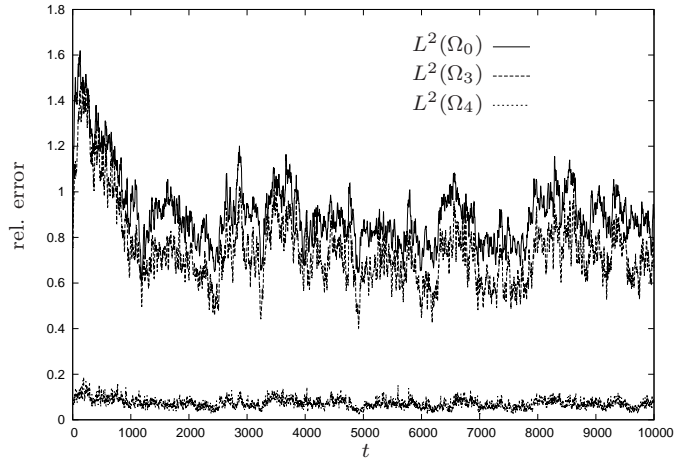} }
\vskip .15 truein
\caption{Relative $L^2$ error over various domains, data in $\Omega_4$, $h=\pi/128$.}
\label{L2Omfig}
\end{figure}


Since the $L^2$ errors over $\Om$ did not decay for data restricted to $\om_3$, we cannot expect them to do so when restricting to $\om_4$. We consider then relative $L^2$ errors that are measured also over subdomains. We found that even using data at every other node, the relative $L^2$ error over $\Omega_0$ is nearly unity, after nudging all the way to $t=10000$ (see Figure \ref{L2Omfig}). The relative $L^2$ error over $\Omega_3$ is nearly the same as that over $\Omega_0$.   From Figure \ref{fig10000} we see that despite the size of these errors, again the main features of the vorticity field emerge already at $t=1000$, but only to roughly the same extent at $t=10000$, consistent with the $L^2$ error.  We also note that while the relative $L^2$ error over $\Omega_4$, where the data is taken,
is roughly $0.1$, the plot of the difference $\tilde\omega_N-\omega_N$ within $\Omega_4$ is uniformly small.  

\begin{figure}[ht]
\psfrag{x}{$x$}
\psfrag{y}{$y$}
\vskip .5truein
  \centerline{\includegraphics[scale=.6 ]{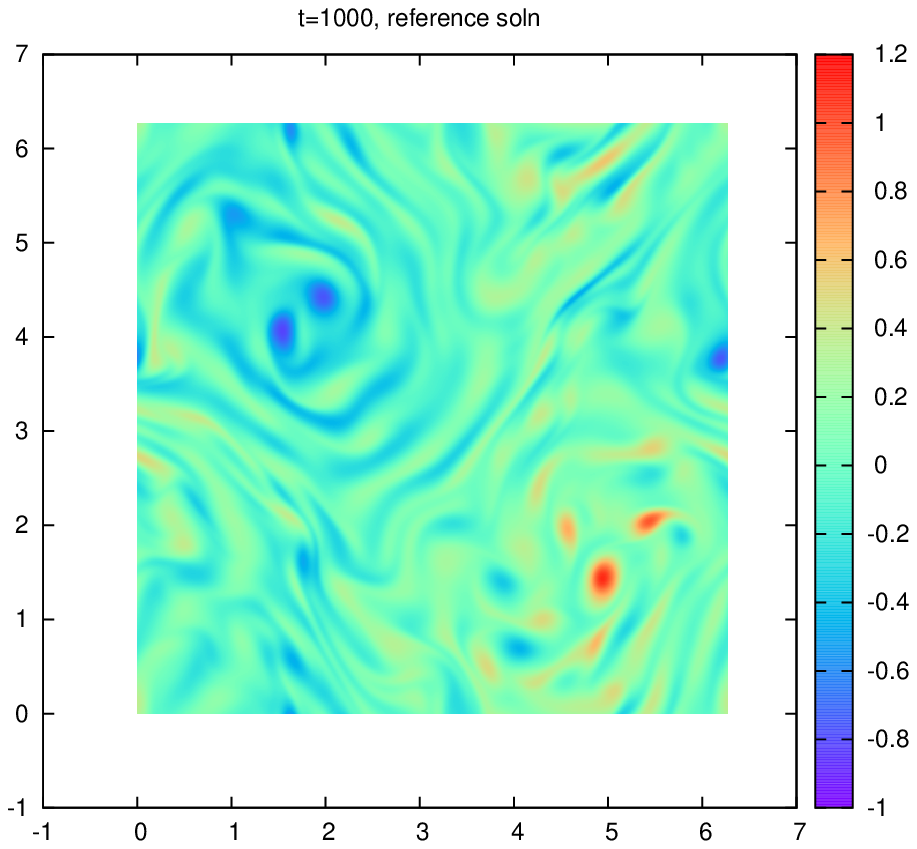} \quad \includegraphics[scale=.6]{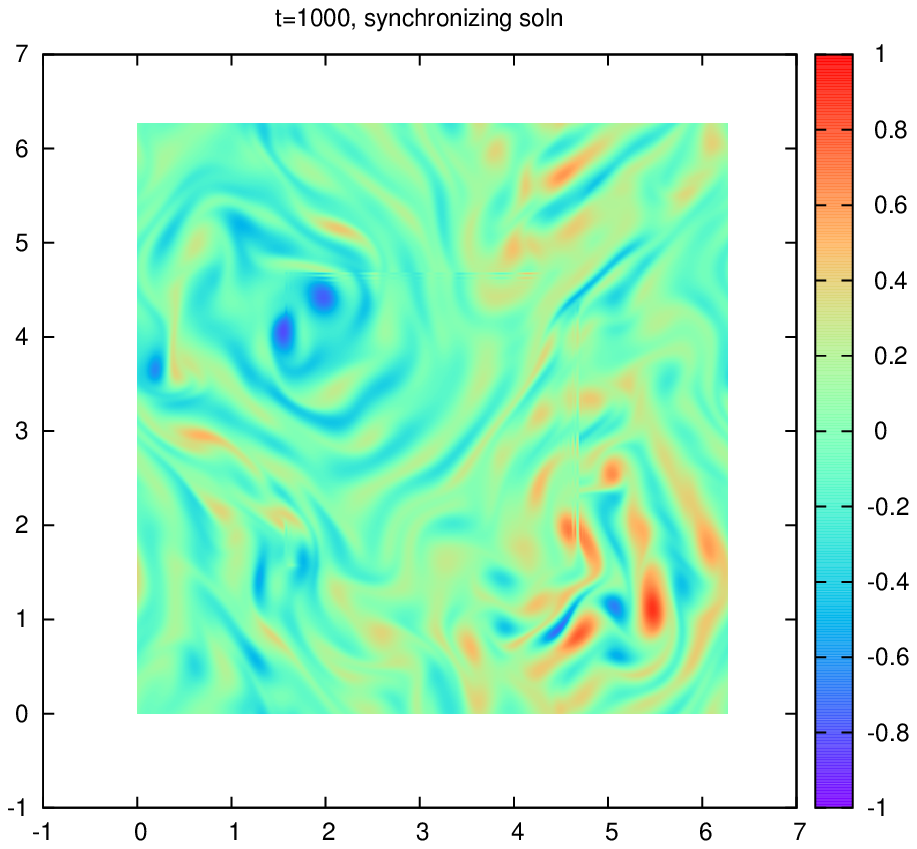}  \quad \includegraphics[scale=.6]{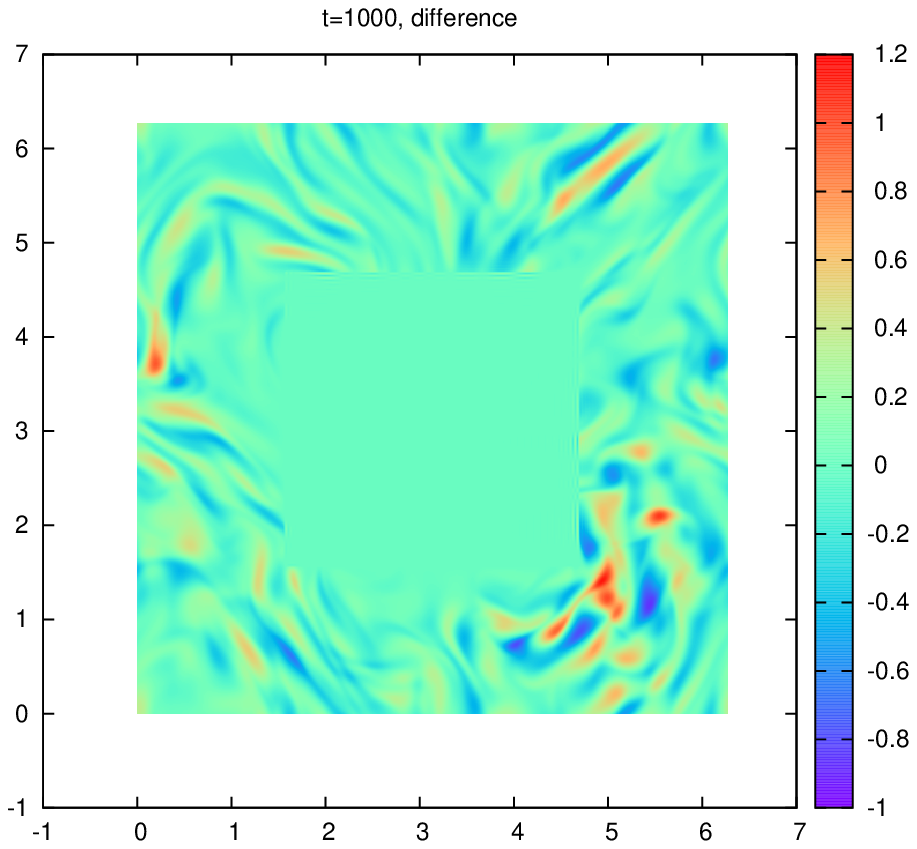}} \vskip .5truein
  \centerline{\includegraphics[scale=.6 ]{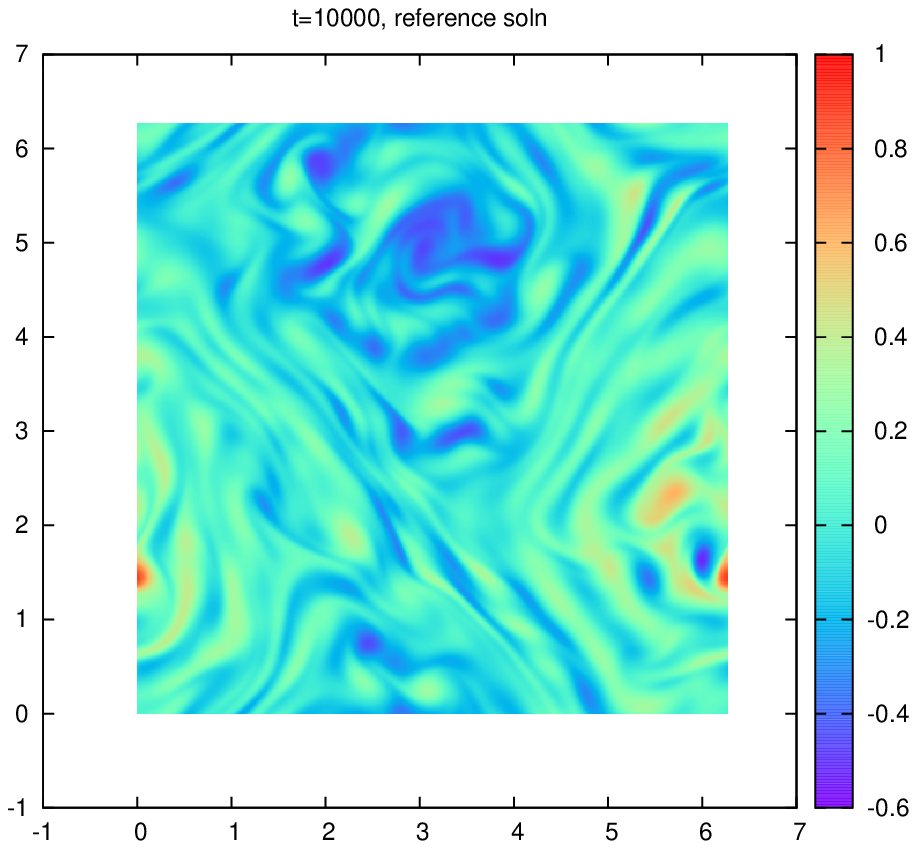} \quad \includegraphics[scale=.6]{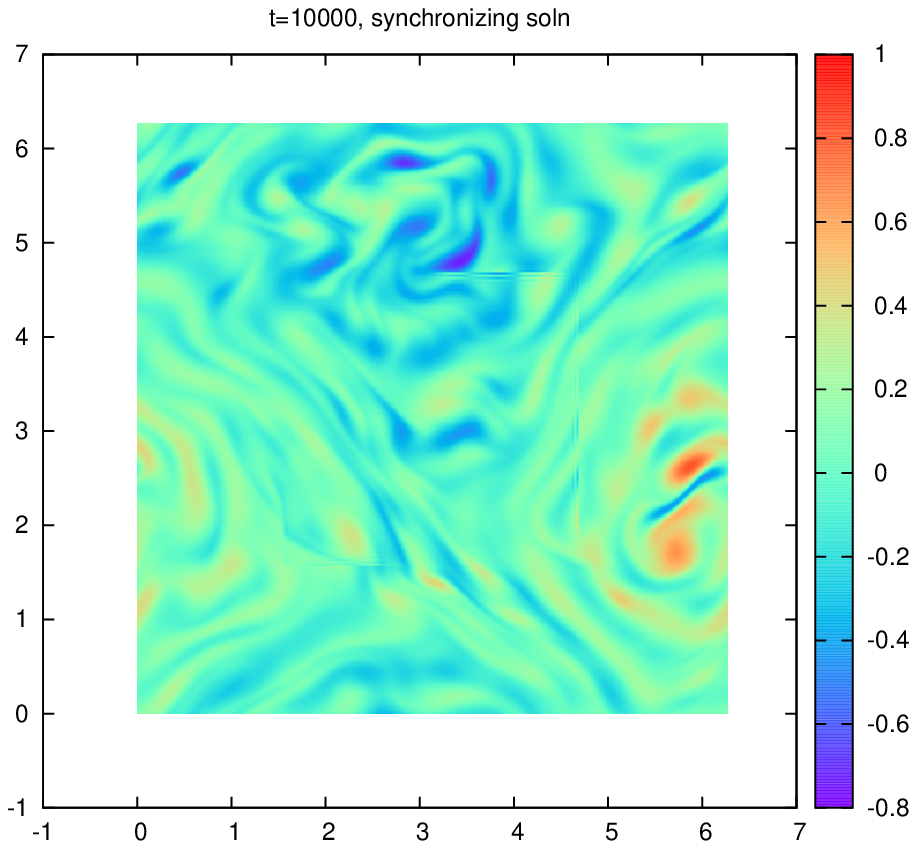}  \quad \includegraphics[scale=.6]{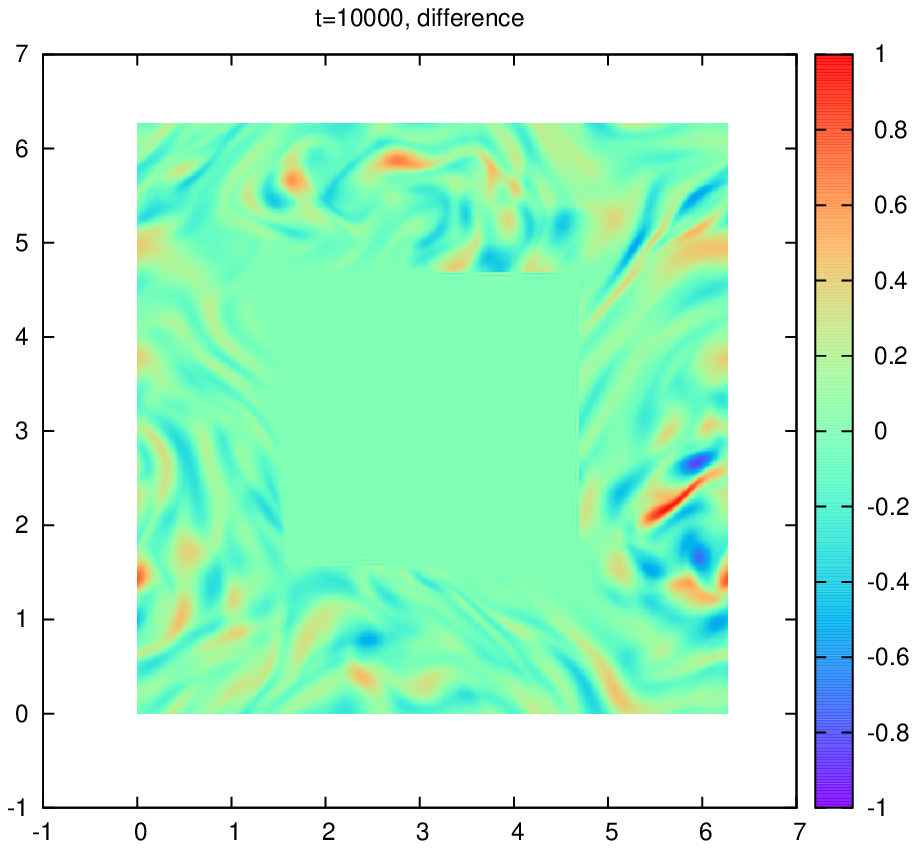}}
\caption{Snapshots of $\omega_N$, $\tilde\omega_N$ and difference, for data in $\om_4$, $p=1$.  Top: $t=1000$, bottom: $t=10000$.}
\label{fig10000}
\end{figure}

 \section{Mobile data} \label{sec.mobile}

Our emphasis to this point has been on how well nudging over a fixed subdomain can recover the reference solution over the {\it entire} computational domain.   The field plot of the difference in Figure \ref{om3fields}  (\ref{fig10000}) show that even with the coarsest data (smallest subdomain), the reference solution {\it within} the subdomain is captured well, despite the problem being global over $\Om$.

This leads us to consider moving the subdomain where the data is collected as the solution evolves.  We start with $\om_4(t)$, a subdomain with $1/4$-th the area of the computational domain, specified by the location of its lower left corner $(n_x,n_y)$ on the $N\times N$ discrete grid.  The movement of the subdomain is determined by the periodic extension of the functions shown in Figure \ref{cornerfig}.  The subdomain thus moves counterclockwise, covering the entire computational domain in one time unit.  We fix the local interpolating operator at our most coarse setting $h=\pi/16$ ($p=4$).  The results over the initial cycle in Figure \ref{figuuu} shows that synchronization is already well underway in just one time unit.    The relative errors are plotted in Figure \ref{errmobile}. Convergence to near machine precision is reached in one-tenth the time needed using finer data on the largest stationary subdomain, $\Omega_1$ (compare to Figure \ref{L2fig}).

\begin{figure}[ht]
\psfrag{1/4}{\tiny$1/4$}
\psfrag{1/2}{\tiny$1/2$}
\psfrag{3/4}{\tiny$3/4$}
\psfrag{1}{\tiny$1$}
\psfrag{ix}{\tiny$n_x(t)$}
\psfrag{iy}{\tiny$n_y(t)$}
\psfrag{x}{\tiny$x$}
\psfrag{y}{\tiny$y$}
\psfrag{t}{\tiny$t$}
\psfrag{N2}{\tiny$N/2$}
 \centerline{\includegraphics[scale=.6 ]{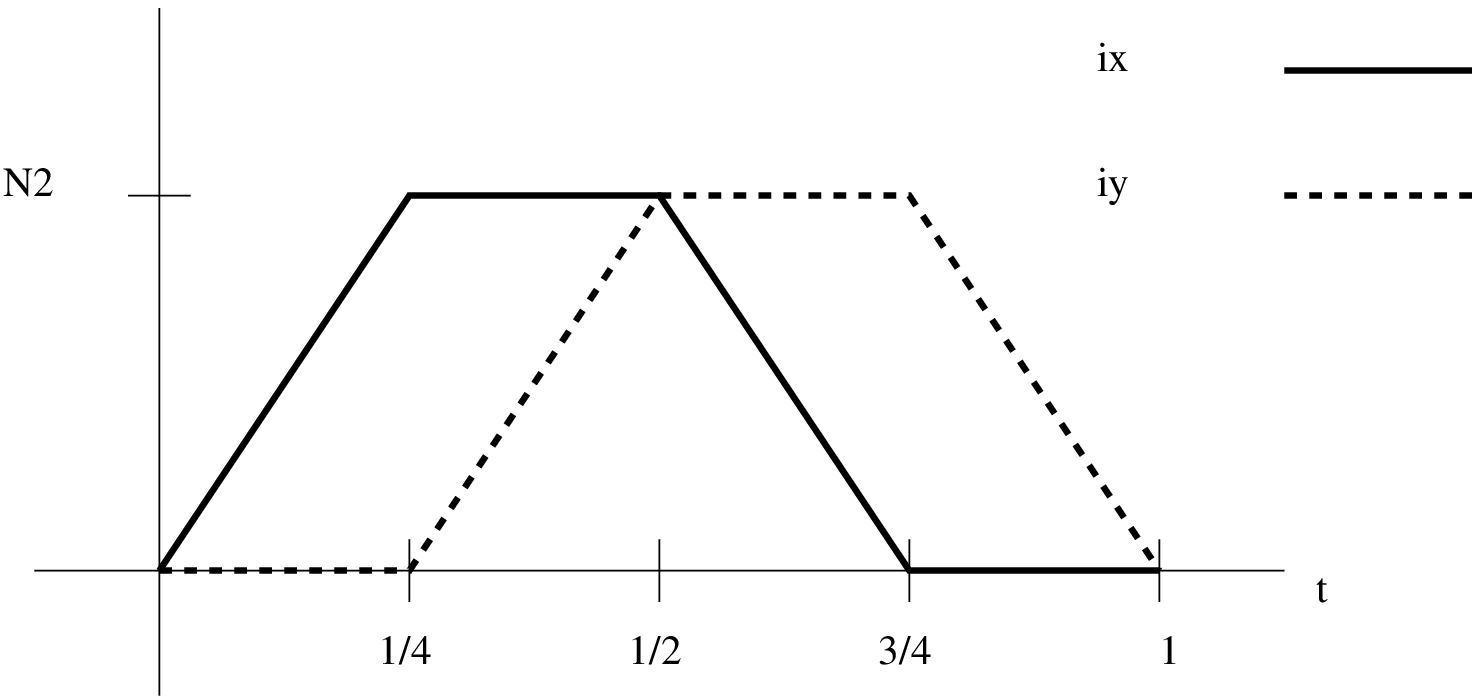} \qquad \includegraphics[scale=.4]{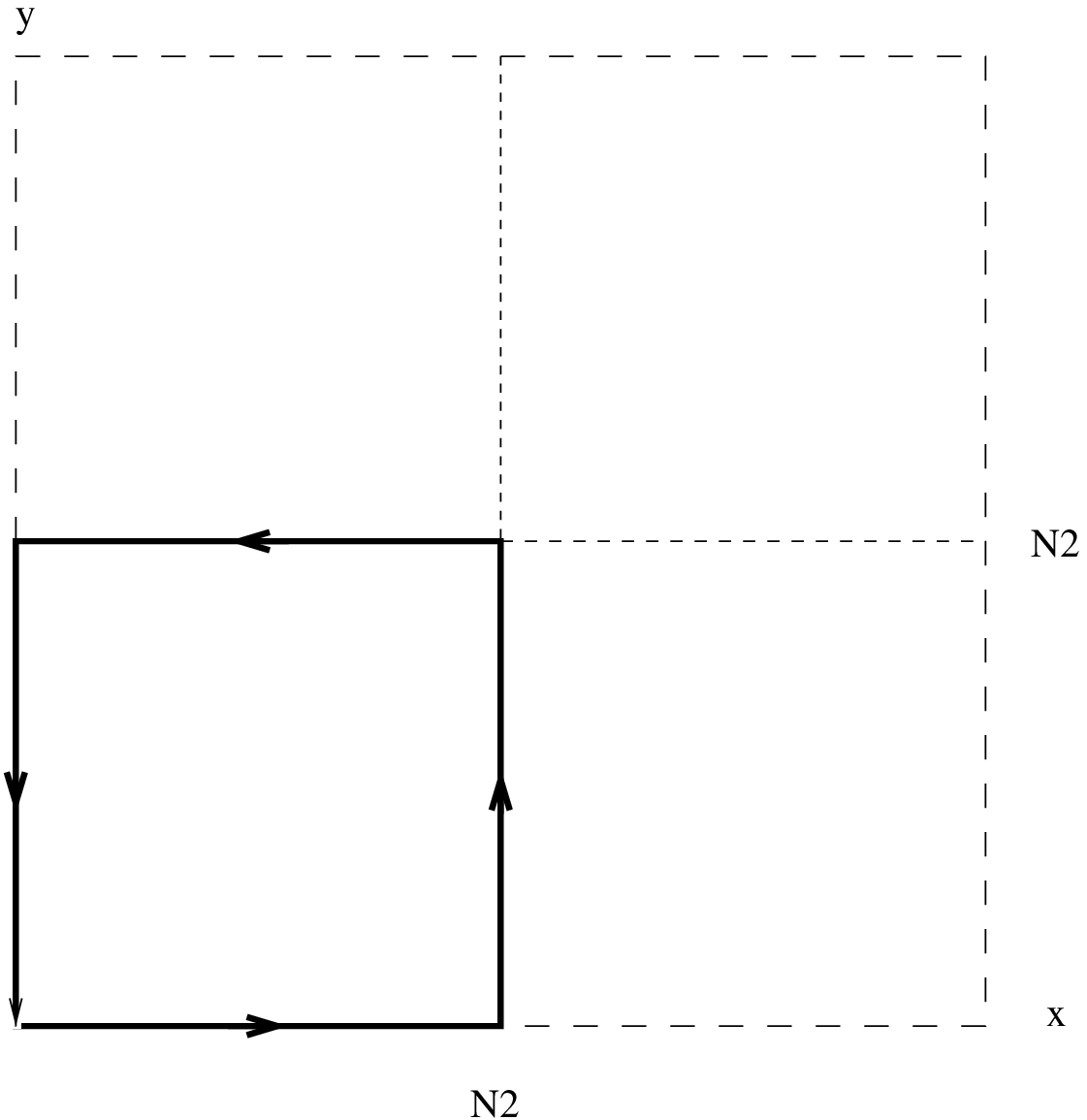} }   
\caption{Movement of lower left corner of subdomain $\Omega_4(t)$.}
\label{cornerfig}
\end{figure}

\begin{figure}[ht]
\psfrag{x}{$x$}
\psfrag{y}{$y$}
\vskip .5truein
  \centerline{\includegraphics[scale=.35 ]{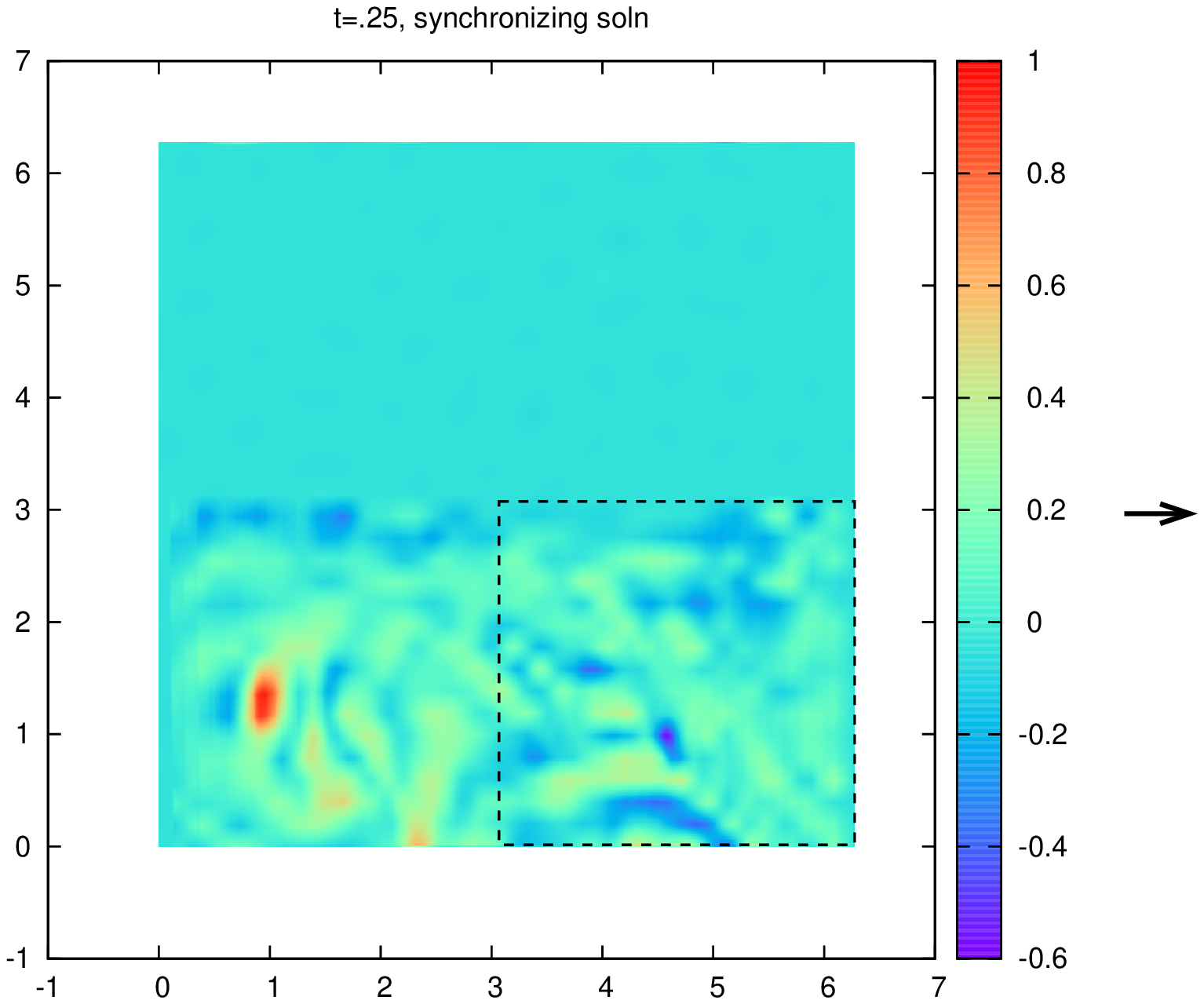}\hskip -.5 truein\includegraphics[scale=.35]{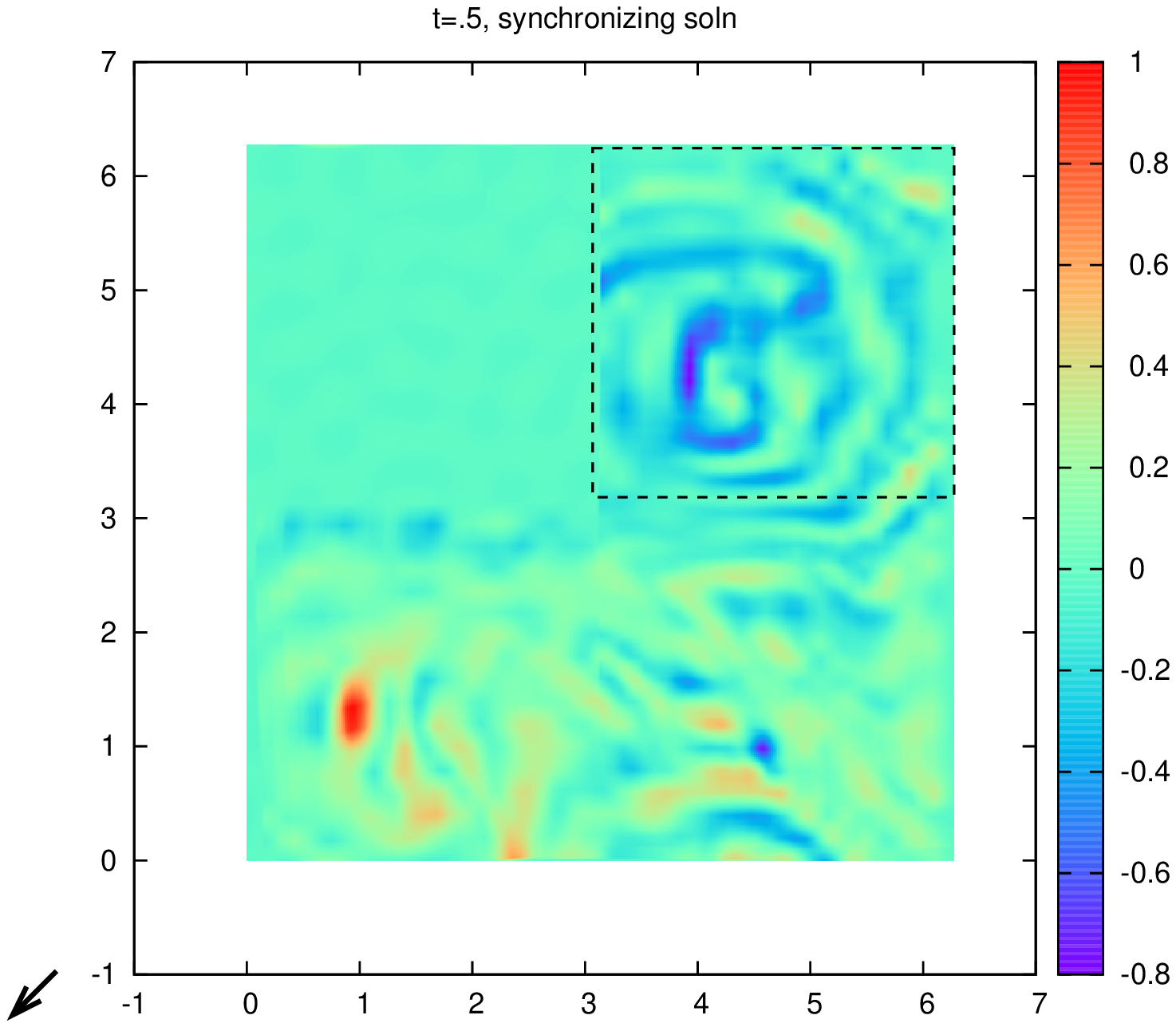} \hskip -.25 truein\includegraphics[scale=.35]{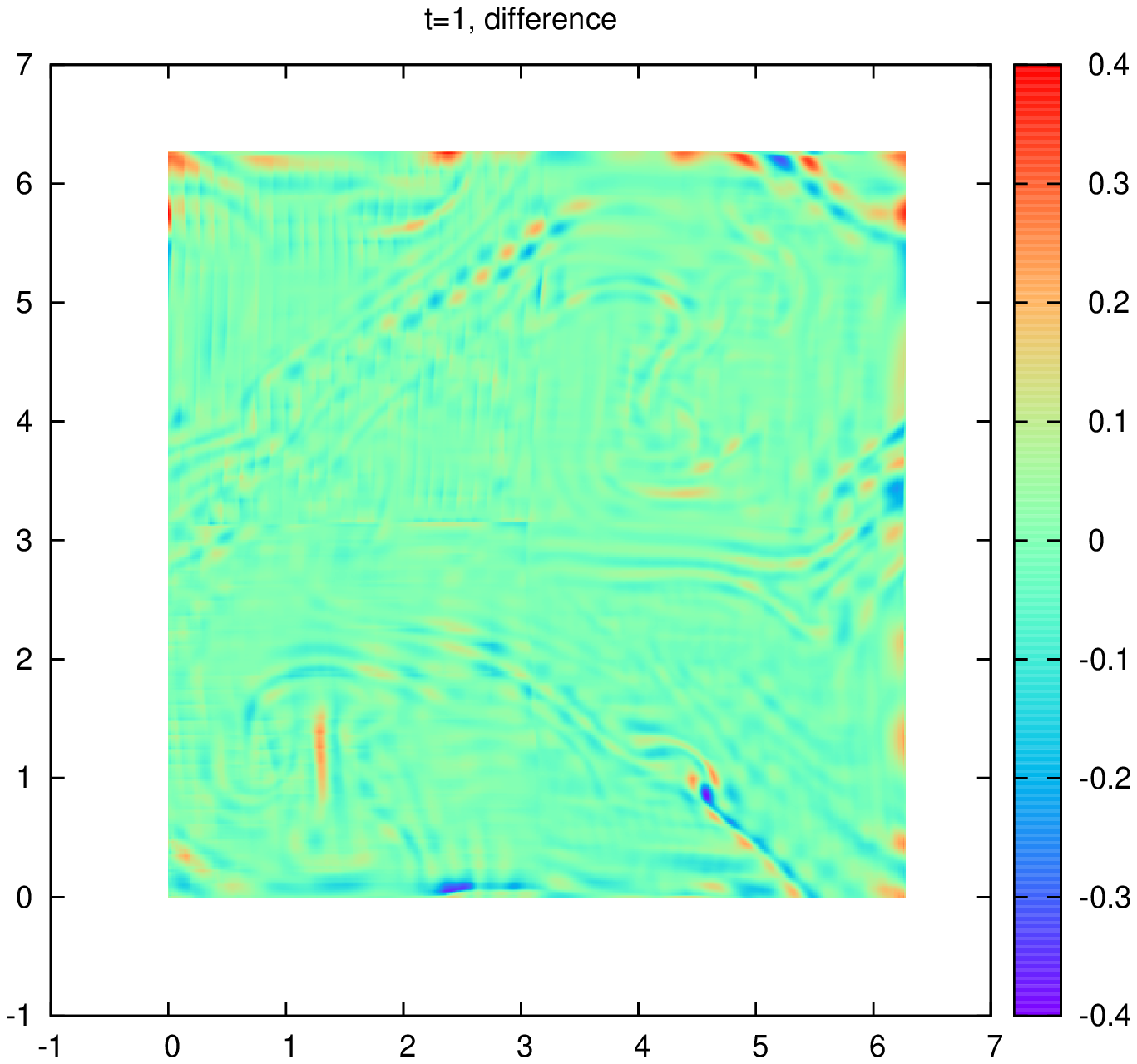}} \vskip .1truein
  \centerline{\includegraphics[scale=.35 ] {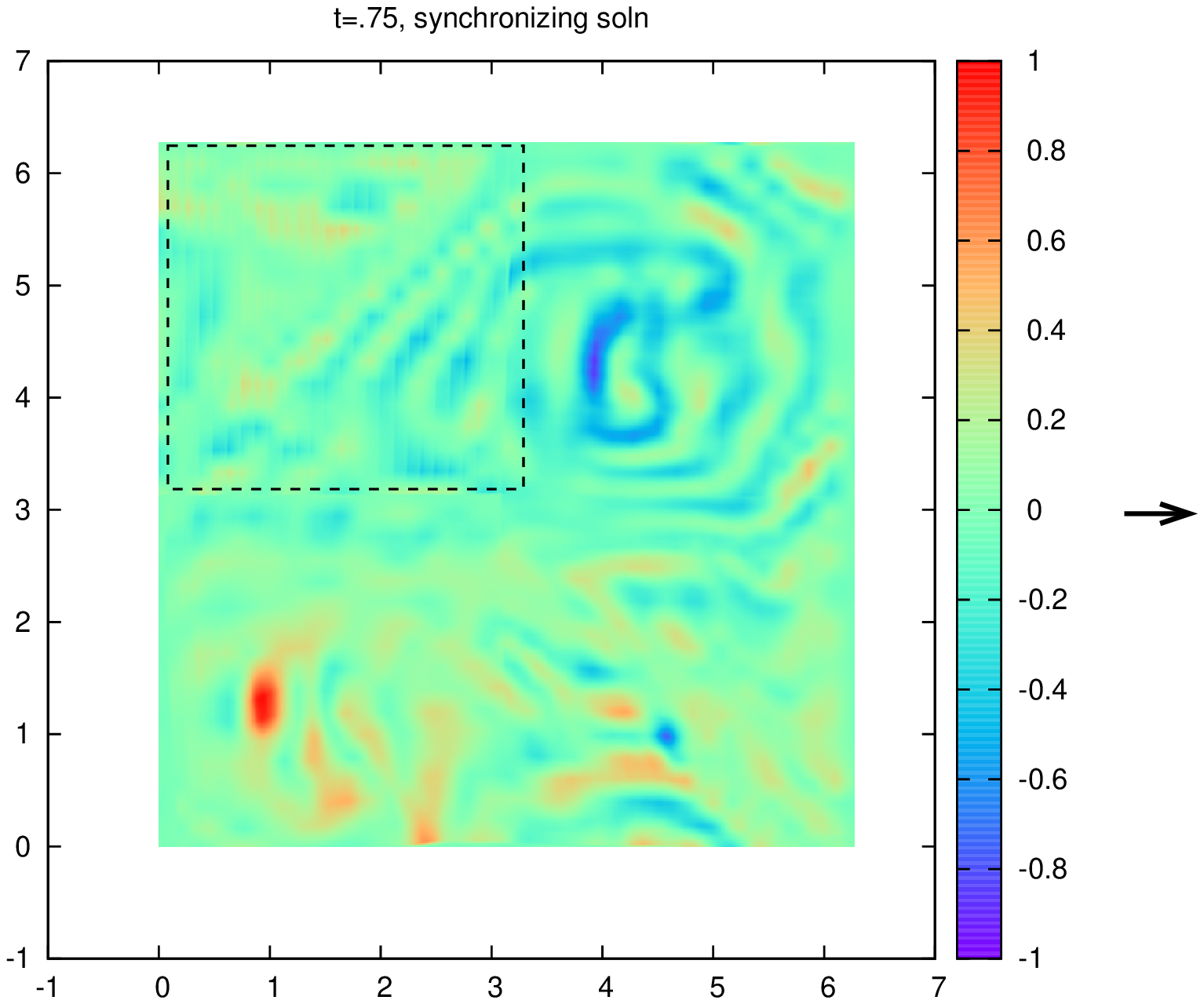} \hskip -.5 truein\includegraphics[scale=.35]{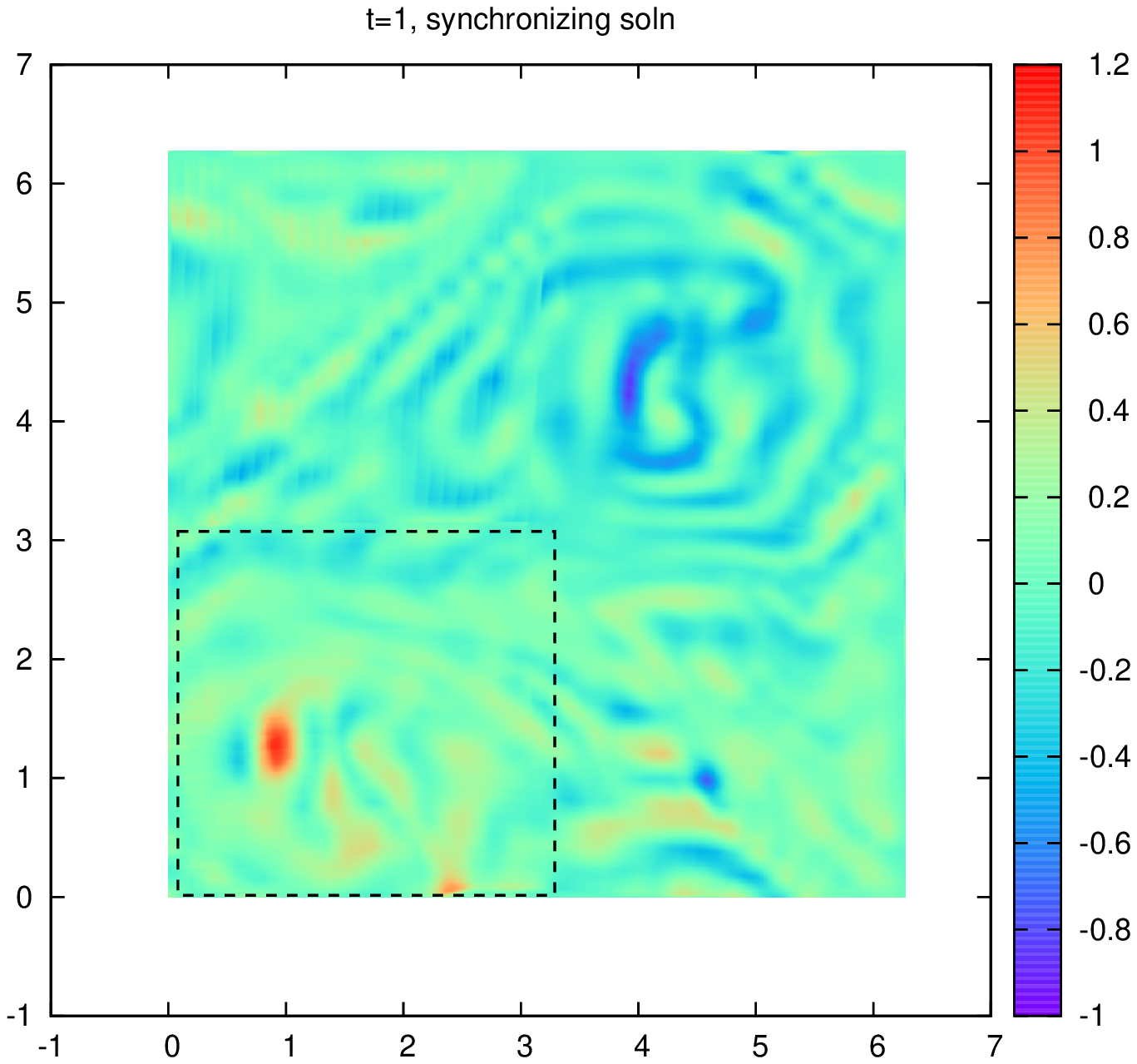}  \hskip -.25 truein\includegraphics[scale=.35]{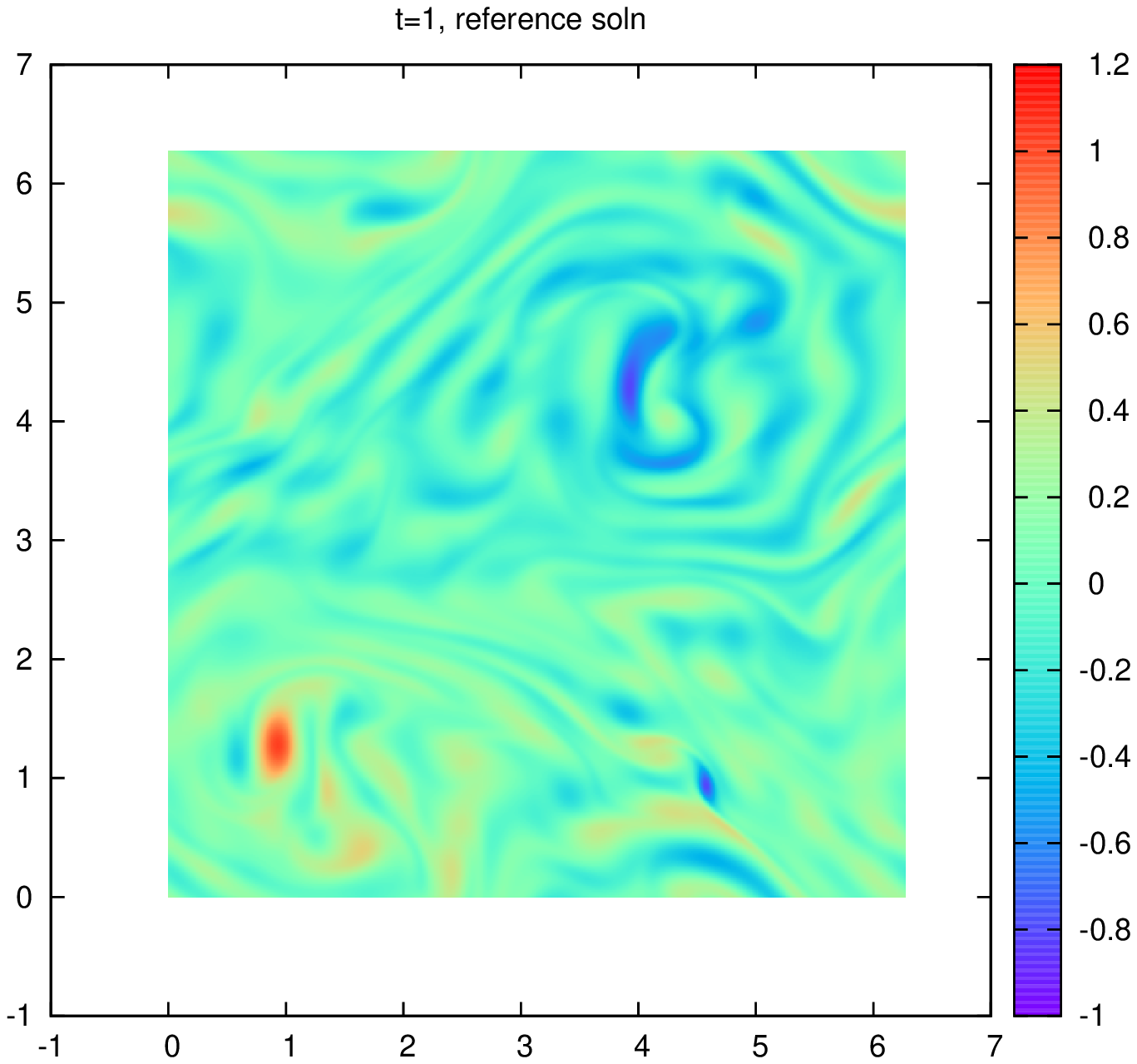}}
\caption{Initial cycle of nudging with $\Omega_4(t)$, $h=\pi/16$, starting at $t=.25$. Reference solution and difference at $t=1$ are on the right. }
\label{figuuu}
\end{figure}

A similar route can be taken by a subdomain $\Omega_5(t)$, where $|\Omega_5(t)|=1/16|\Om|$, such as that shown in Figure \ref{cornerfig2}. Note that in this case the periodic extension of $n_y(t)$ is discontinuous.  Though a bit slower than with $\Omega_4(t)$, synchronization is still achieved with this smallest small subdomain (see Figure \ref{errmobile}).

\begin{figure}[ht]
\psfrag{3}{\tiny$3$}
\psfrag{4}{\tiny$4$}
\psfrag{7}{\tiny$7$}
\psfrag{8}{\tiny$8$}
\psfrag{11}{\tiny$11$}
\psfrag{12}{\tiny$12$}
\psfrag{15}{\tiny$15$}
\psfrag{ix}{\tiny$n_x(t)$}
\psfrag{iy}{\tiny$n_y(t)$}
\psfrag{x}{\tiny$x$}
\psfrag{y}{\tiny$y$}
\psfrag{t}{\tiny$t$}
\psfrag{N2}{\tiny$N/2$}
\psfrag{N4}{\tiny$N/4$}
\psfrag{N3}{\tiny$3N/4$}
 \centerline{\includegraphics[scale=.5 ]{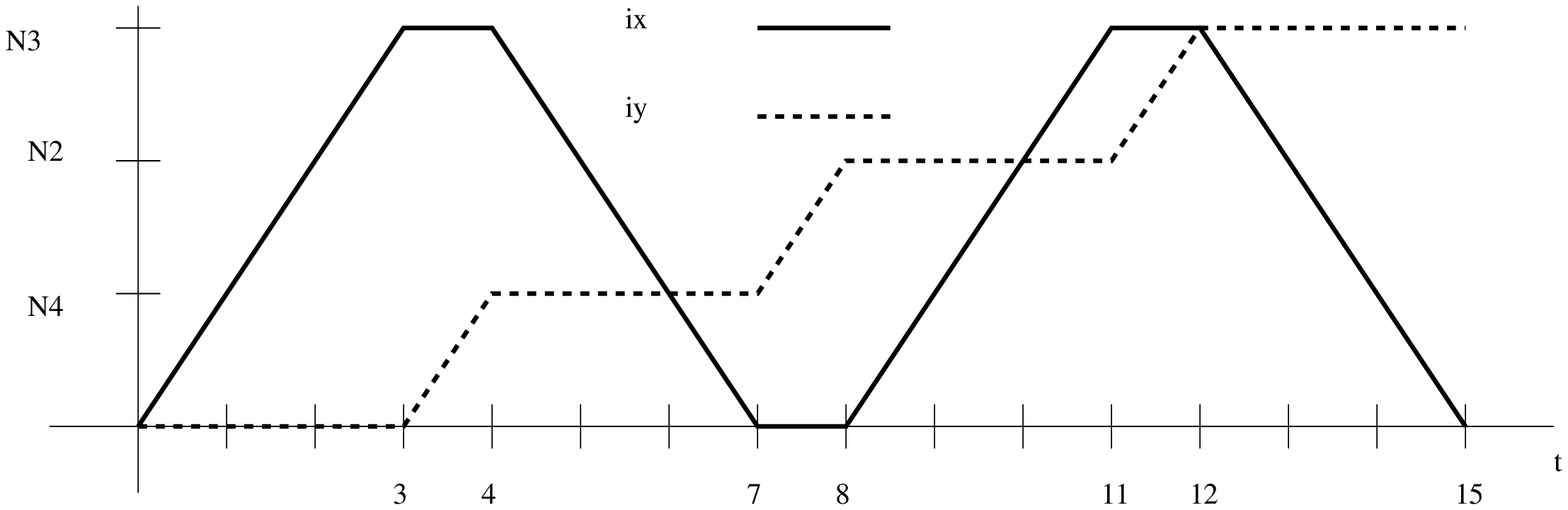}\quad \includegraphics[scale=.35 ]{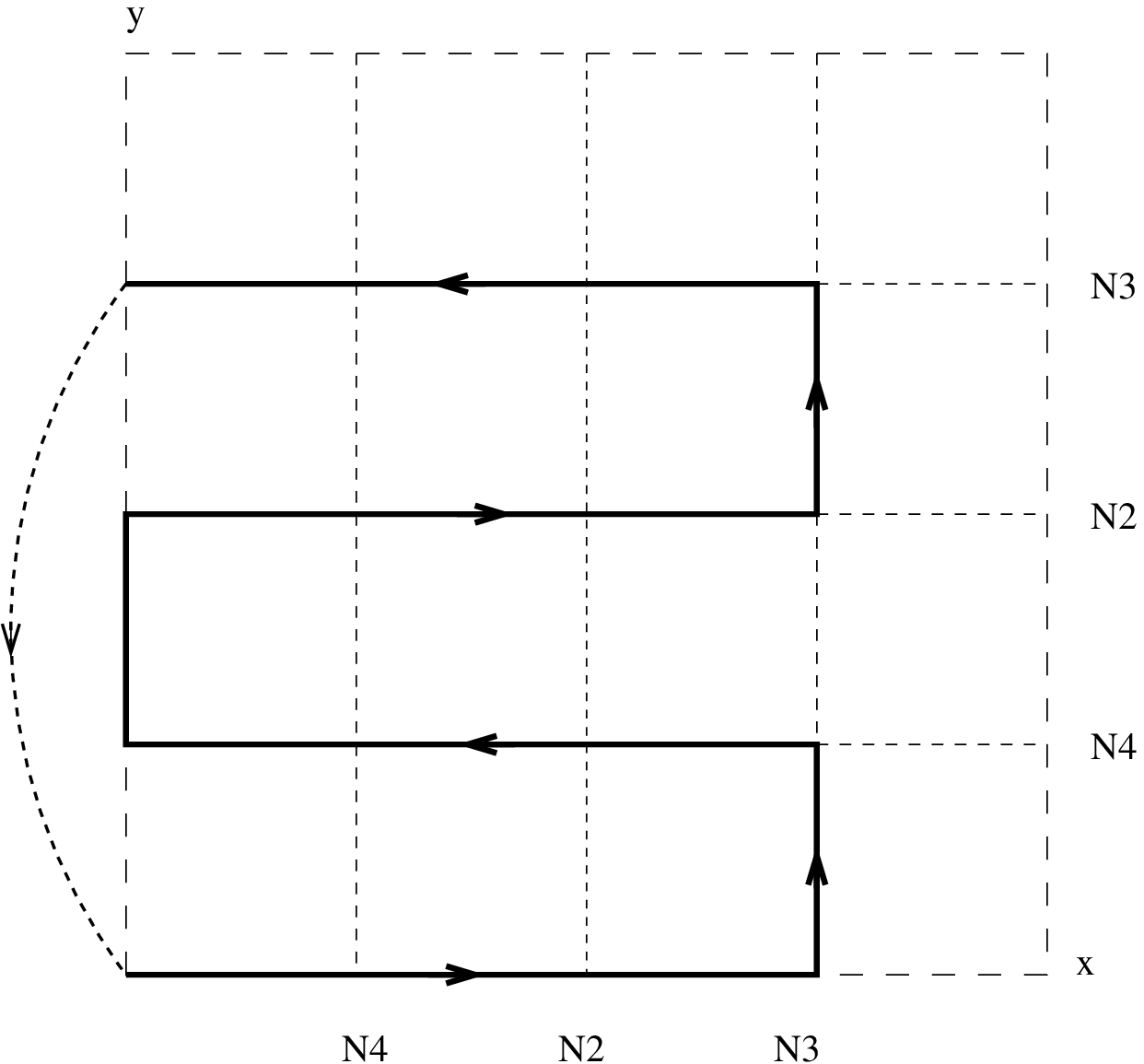} }
\caption{Movement of lower left corner of subdomain $\Omega_5(t)$.}
\label{cornerfig2}
\end{figure}

\begin{figure}[ht]
\psfrag{t}{\tiny$t$}
\psfrag{Omega}{\tiny$\Omega$}
\psfrag{omega}{\tiny$\omega$}
\psfrag{omegam}{\tiny$\omega^-$}
\psfrag{omegap}{\tiny$\omega^+$}
\psfrag{L2rel}{\tiny $L^2(\Om)$ rel. error}
\psfrag{'L2m127'}{\tiny$\Omega_5(t)$}
\psfrag{'L2m255'}{\tiny$\Omega_4(t)$}
\psfrag{'L2m127' using 1:3}{\tiny$\Omega_5(t)$}
\psfrag{'L2m255' using 1:3}{\tiny$\Omega_4(t)$}
\psfrag{Linf}{\tiny $L^\infty$ rel. error}
  \centerline{\includegraphics[scale=.35, angle=-90]{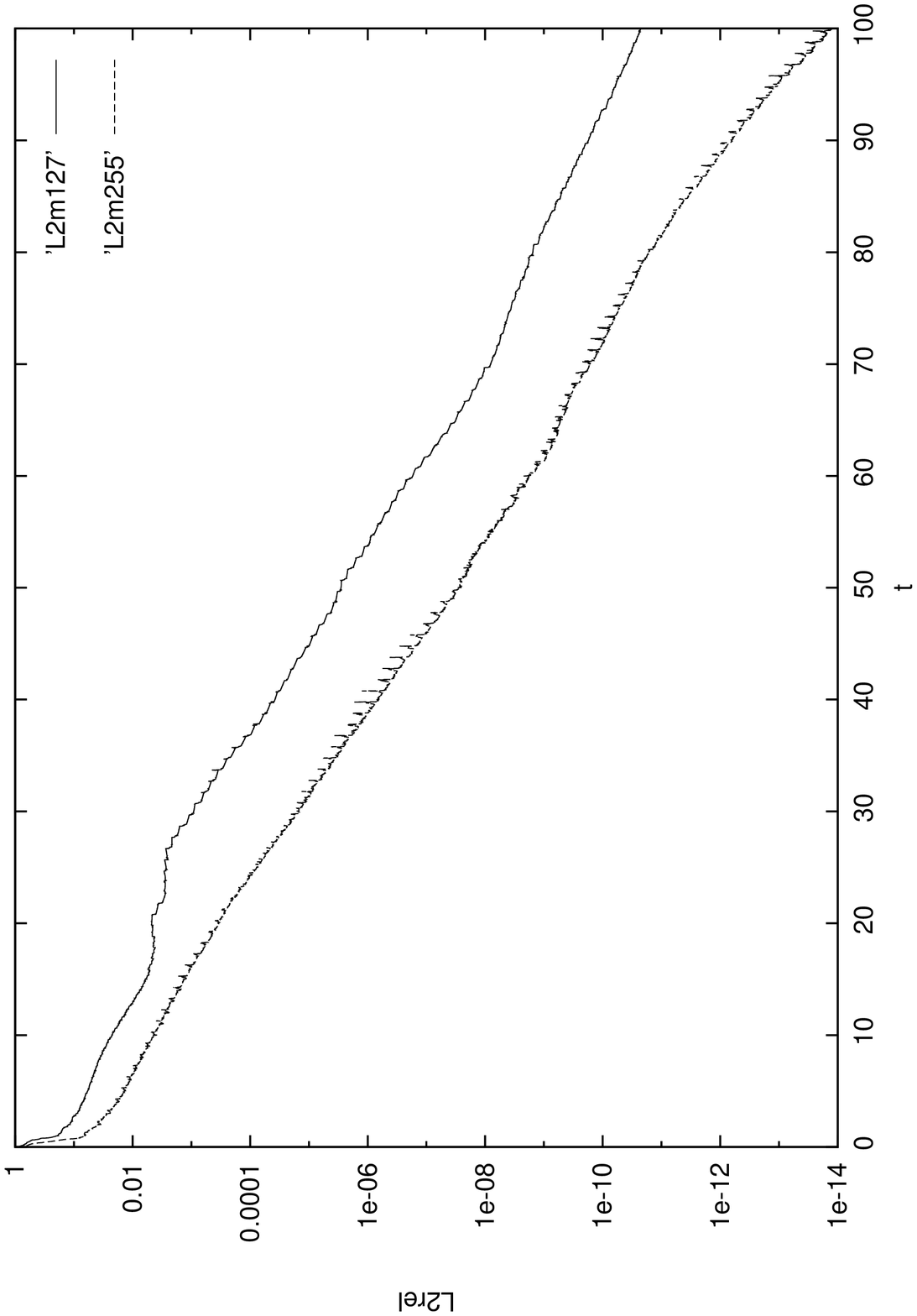} 
  \qquad \includegraphics[scale=.35, angle=-90]{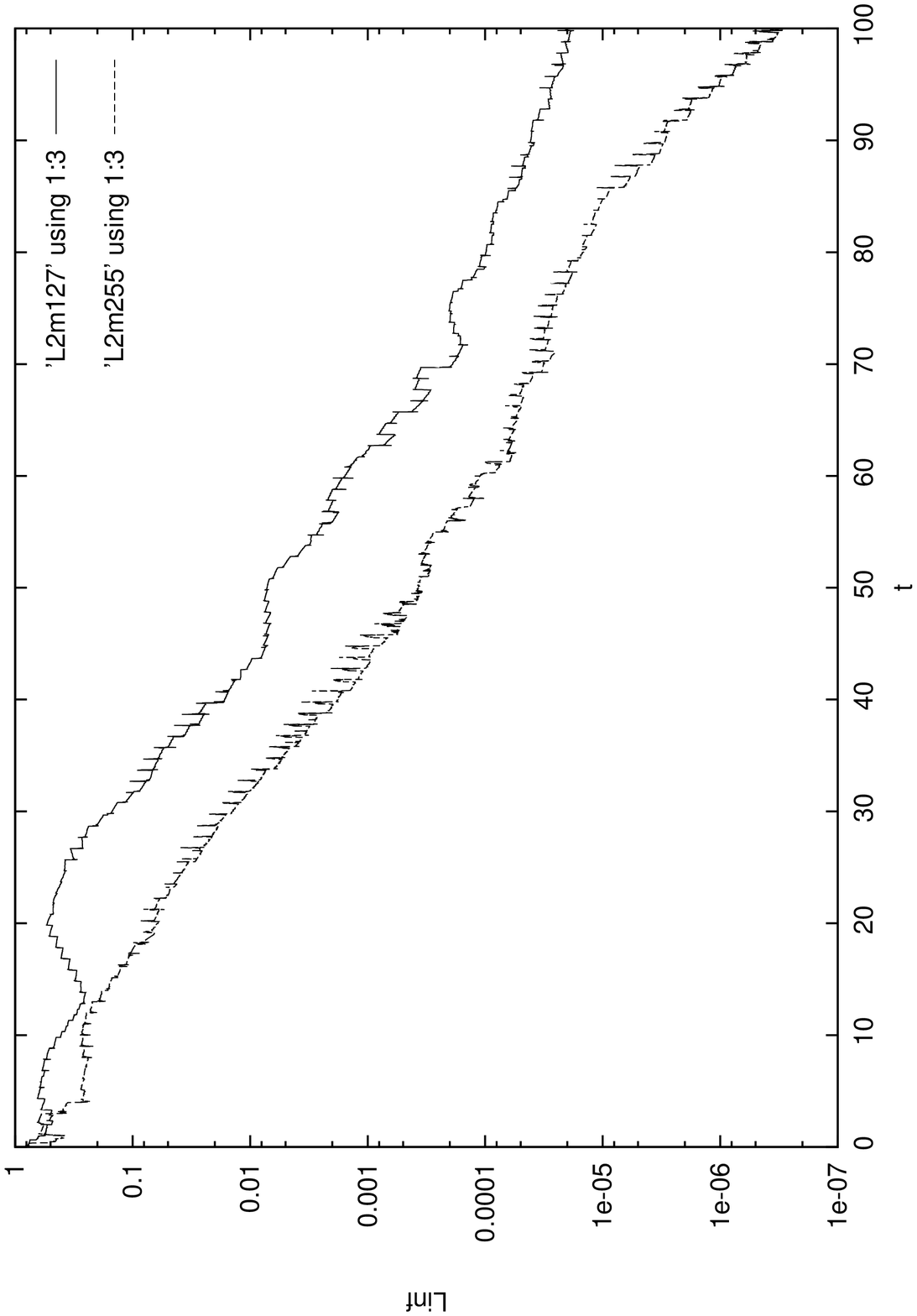}}
\vskip .15 truein
\caption{Relative errors for $\Omega_4(t)$, $\Omega_5(t)$, both with $h=\pi/16$. }
\label{errmobile}
\end{figure}


\section{Summary}\label{sec.summary}

Previous rigorous results on data assimilation in the direction of \cite{AOT} rely on uniformly distributed observations of the reference solution. We have rigorously shown that, modulo an arbitrarily small error, the observations can be confined to a sub-domain provided the solution is sufficiently regular and sufficiently many local samples are used.

Analysis guarantees what should work in practice (up to numerical error).   Conversely, when an algorithm works in practice, it suggests there might be some analysis to support it.   Computational work has demonstrated that nudging over the entire computational domain works much better than required in the rigorous estimates \cite{ATGKZ,OT1,FJJT,HJ,LRZ,LV}.  The conditions in Theorem \ref{thrm.main1} are essentially
$$
\mu \gtrsim \nu G^2 e^{\sqrt{N}}\;, \qquad h \lesssim \sqrt{\frac{\nu}{\mu}} \sim \frac{1}{G} e^{\sqrt N/2}.
$$
In our pseudospectral implementation, we have $h=2\pi/N$, so strictly speaking Theorem \ref{thrm.main1}  would require $N\sim G\exp(\sqrt N/2)$, which is far from obtainable.  

Yet our computational results are promising.  We have synchronized with a chaotic reference solution to near machine double precision in relative $L^2$ error using data on every 8th grid point (in each direction) on a fixed subdomain that is roughly $3/4 \times$ the area of the computational domain $\Omega_0$.  The rate of exponential decay in the error slows somewhat when data is restricted to a subdomain that is roughly $2/3 \times$ the area of $\Omega_0$.  The $L^2$ error does not appreciatively decay at all if data is taken on a subdomain of roughly $1/4 \times$ the area.  Still, the main features of the vorticity field are captured if data is taken on even just a quarter of the area.  Overall then, this constitutes another case of an algorithm working better than analysis suggests.

Preliminary tests of nudging on moving subdomains are even more encouraging.  Sliding subdomains of $1/4\times$ and even $1/16\times$ the area of $\Om$ to cover $\Om$  achieves synchronization in one-tenth the time needed for a larger fixed domain, and does so with coarser data.  This suggests analysis of mobile local data assimilation is merited, a matter we will explore in a future work.

\end{document}